\newtheorem{theorem}{Theorem}[section]
\newtheorem{lemma}[theorem]{Lemma}
\newtheorem{definition}{Definition}[section]
\newtheorem{assum}{Assumption}
\newtheorem*{remark}{Remark}
\newcommand{\al}{\alpha}
\newcommand{\om}{\omega}
\def\one{\mbox{1\hspace{-4.25pt}\fontsize{12}{14.4}\selectfont\textrm{1}}}
\numberwithin{equation}{section}
\title{Invariance principles for random walks in random environment on trees}
\author{George Andriopoulos \thanks{NYU-ECNU Institute of Mathematical Sciences at NYU Shanghai, China. Email: ga73@nyu.edu. Supported by EPSRC grant Number EP/HO23364/1.}}
\date{}
\begin{document}

\maketitle

\begin{abstract}
In \cite{croydon2016scaling} a functional limit theorem was proved. It states that symmetric processes associated with resistance metric measure spaces converge when the underlying spaces converge with respect to the Gromov-Hausdorff-vague topology, and a certain uniform recurrence condition is satisfied. Such a theorem finds particularly nice applications if the resistance metric measure space is a metric measure tree. To illustrate this, we state functional limit theorems in old and new examples of suitably rescaled random walks in random environment on trees.

First, we take a critical Galton-Watson tree conditioned on its total progeny and a non-lattice branching random walk on $\mathbb{R}^d$ indexed by it. 
Then, conditional on that, we consider a biased random walk on the range of the preceding.
Here, by non-lattice we mean that distinct branches of the tree do not intersect once mapped in $\mathbb{R}^d$. 
This excludes the possibility that the random walk on the range may jump from one branch to the other without returning to the most recent common ancestor.  
We prove, after introducing the bias parameter $\beta^{n^{-1/4}}$, for some $\beta>1$, that the biased random walk on the range of a large critical non-lattice branching random walk converges to a Brownian motion in a random Gaussian potential on Aldous' continuum random tree (CRT).   
 
Our second new result introduces the scaling limit of the edge-reinforced random walk on a size-conditioned Galton-Watson tree with finite variance as a Brownian motion in a random Gaussian potential on the CRT with a drift proportional to the distance to the root.
\\
\textbf{Keywords and phrases}: random walk in random environment, diffusion in random potential, biased random walk, branching random walk, Galton-Watson tree, Sinai's regime, self-reinforcement.
\\
\textbf{AMS 2010 Mathematics Subject Classification}: 60K37, 60F17 (Primary), 82D30, 60K35 (Secondary).
\end{abstract}

\section{Introduction}

In recent years the scaling limits of tree-like spaces became well-understood. To lay out a distinctive but non-exhaustive list of particular cases, we cite some previous work on trees and graphs that possess Aldous' Brownian continuum random tree (CRT) as their scaling limit, see \cite{aldous1993III} and \cite{gall2006trees}. 
Its universality class is, in fact, even larger e.g. critical multi-type Galton-Watson trees \cite{miermont2008invariance}, random trees with prescribed degree sequence satisfying certain conditions \cite{broutin2014asymptotics}, random dissections \cite{curien2015crt}, random graphs from subcritical classes \cite{panagiotou2016scaling}. 
Also, it appears as a building block of the limiting space of rescaled random quadrangulations, which is constructed as a complicated quotient of the CRT, see \cite{le2012scaling}. 
Another goal of the investigation, which in the last few years has been intense, is to provide a description for the scaling limits of stochastic processes on tree-like spaces; see \cite{croydon2008conv} and \cite{croydon2010scaling} for scaling limits of simple random walks on critical Galton-Watson trees, conditioned on their size, with finite and infinite variance respectively,  \cite[Section 7.5]{athreya2017invariance} and \cite{barlow2017spanning} for scaling limits of simple random walks on $\Lambda$-coalescent measure trees and the two-dimensional uniform spanning tree respectively.
Last but not least, in \cite{kigami1995harmonic} diffusions on dendrites are constructed by approximating Dirichlet forms.

Despite the distinct characteristics of the processes mentioned, a shared feature is that their convergence essentially emanates from the convergence of metrics and measures that provide the natural scale functions and speed measures in this setting. Indeed, it was shown that the Gromov-Hausdorff-vague convergence (for a definition, see Section \ref{topcon}) of the metric measure trees and a certain uniform recurrence condition which implies (but is not necessary) non-explosion of the process \cite{croydon2016scaling}, or a condition on the lengths of edges leaving compact sets \cite{athreya2017invariance} (neither condition implies the other, see \cite[Remark 1.3(a)]{croydon2016scaling}) yields the convergence of the associated stochastic processes. For this very reason \cite{athreya2017invariance} and \cite{croydon2016scaling} can be seen as a generalization of Stone's invariance principle, who fifty years ago in \cite{stone1963limit} considered Markov processes which share the characteristic that their state spaces are closed subsets of the real line and that their random trajectories do not jump over points. Even more important, the result proved in \cite{croydon2016scaling} holds for other spaces (not necessarily tree-like) equipped with a resistance metric and a measure, allowing for a broader range of examples to be treated. 
Beyond the framework of resistance metrics, it parallels the work of Suzuki in \cite{suzuki2019riemann} who showed that the pointed measured Gromov-Hausdorff convergence of a sequence of metric measure spaces that satisfy a Riemannian curvature-dimension condition, implies the weak convergence of the underlying Brownian motions.
 
We would like to draw to the attention of the reader the complementary work of \cite{croydon2017fin}, where the stronger uniform volume growth (with volume doubling) condition enabled the study of time-changes of stochastic processes according to irregular measures, with the representative examples treated being the Liouville Brownian motion (in two dimensions, it is the diffusion associated with planar Liouville quantum gravity and is conjectured to be the scaling limit of simple random walks on random planar maps, see \cite{bere2015quantum}, \cite{scott2011kpz} and \cite{garban2016liouville}), the Bouchaud trap model, and the random conductance model on a variety of self-similar trees and fractals. For the latter two models, the limiting process on the respective space is a FIN diffusion \cite{fontes2002fin}, which is connected with the localization and aging of physical spin systems, see \cite{arous2005bouchaud} and \cite{jiri2011cond}.

Going a step further, it would be desirable to ask whether it is possible to employ this framework in order to study scaling limits of random walks in random environment on tree-like spaces (for a definition, see Section \ref{rwre}). The reversibility of this model offers an alternative description of it as an electrical network with conductances that can be described explicitly in terms of the potential of the random walk in random environment (see \eqref{resist1}). This observation allows for random walks in random environment on tree-like spaces to be thought of as variable speed random walks on those spaces with the shortest path metric replaced by a distorted metric (see \eqref{resist12}), which is a resistance metric solely expressed in terms of the potential of the random walk in random environment, and endowed with the invariant measure specified in \eqref{resist2}, which is a distortion of the uniform probability measure on the vertices of the tree.

In this case, Gromov-Hausdorff-vague convergence of the distorted metric measure trees, equipped with the potential of the random walk in random environment as a spatial element, can be viewed as a generalized metric measure version of Sina\u{\i}'s regime in dimension one, that is when the potential converges to a two-sided Brownian motion. For a definition, see \cite[Assumption 2.5.1]{ofer2004random}. Having this in mind, as an application of the main contributions in \cite{athreya2017invariance} and \cite{croydon2016scaling}, the convergence of the distorted metrics and measures leads to the convergence of the the random walks in random environment. Here, we should stress that in the various examples we consider throughout the paper, the limiting diffusion is a Brownian motion on a locally compact real tree, which is on natural scale with respect to the resistance metric. Typically, keeping up with the terminology used to describe continuum analogues of one-dimensional random walks in random environment, it can be seen as a Brownian motion in random potential on a locally compact real tree.   

In the one-dimensional model (for a definition, see Section \ref{example1}), it is well-known that due to the large traps that arise, the random walk in random environment in Sina\u\i's regime localizes at a rate $(\log n)^2$ (\eqref{unsure} is due to \cite{sinai1982limit}, for sharp pathwise localization results, see \cite{golosov1984localization}), and therefore there is no hope in finding a Donsker's theorem in random environment without providing a discrete scheme that changes the random environment appropriately at every step (this is not a particular feature of random environment, e.g. rescaling random walk with drift to Brownian motion with drift also requires changing the drift as the parameter $n$ varies). This was understood by Seignourel \cite{seignourel2000discrete}, who proved such a scheme for Sina\u\i's random walk, and verified a conjecture on the scaling limit of a random walk with infinitely many barriers dating back to Carmona \cite{carmona1997mean}. 
Our approach is advantageous as it renders clear how the ``flattening'' of the environment that was introduced in the first place in \cite{seignourel2000discrete}, forces the potential to converge to a two-sided Brownian motion, and consequently the distorted metric and measure to converge to the scale function and the speed measure of the Brox diffusion \cite{brox1986one} (see \eqref{broxmodel}). Also, we are able to considerably shorten Seignourel's proof but more importantly to remove the technical assumption of uniform ellipticity (see \eqref{gant2}) and the assumption on the independent and identically distributed (i.i.d.) random environment as well.

Next, we consider biased random walk on branching random walk associated with a marked tree, that is a rooted ordered finite tree in which every edge is marked by a real value (it is equivalent to have values assigned to the vertices instead). We associate with each vertex the trajectory of a walk defined by summing the values of all the edges contained in the unique path from the root to that particular vertex (it is obvious that the walk is killed after as many steps as the height of the vertex evaluated at), see \eqref{eval1}. Choosing the skeleton and the values of the marked tree at random, the multiset of the random trajectories of the killed walk is called a branching random walk.

We are interested in biased random walk on branching random walk $\phi_n$ conditioned to have total population size $n$, where the underlying tree is a critical Galton-Watson tree $T_n$ with exponential tails for the offspring distribution, and each edge gets assigned, in an i.i.d. fashion, a real-valued vector which is distributed according to a random variable $Y$ which has centred, continuous distribution with fourth order polynomial tail decay. 
To contrast this case with the case in which the values have the step distribution of a simple random walk in $\mathbb{Z}^d$, we usually refer to the former as non-lattice branching random walk and to the latter as lattice branching random walk. When viewed as an embedded subgraph of $\mathbb{R}^d$, the non-lattice branching random walk is essentially a self-avoiding path, or in other words the multiset of trajectories of $\phi_n$ is a tree, whereas the lattice branching random walk regarded as a subgraph of $\mathbb{Z}^d$ contains loops, and therefore is not necessarily a self-avoiding path.

The bias, say $\beta>1$, is chosen in such a way that the walk has a tendency to move towards a certain direction (see Section \ref{sec1} and the details that lie therein). We prove that a weakly biased random walk on the aforementioned model converges to a  Brownian motion in a random Gaussian potential on the CRT, which is a Brownian motion on the CRT endowed with the resistance metric \eqref{distdist1} and a finite measure (see \eqref{distmeas1}). We refer to this regime as the weakly biased regime on account of the ``flattening'' that the bias has to undergo. More formally, we state our result in the theorem below. For a definitive statement see Theorem \ref{definitive}. Let us define central objects such as the CRT, the random Gaussian potential on the preceding (or the tree-indexed Gaussian process as we will often refer to it) before we state the theorem.

\begin{definition}[stick-breaking construction]
Let $(C_1,C_2,...)$ be the times of an inhomogeneous Poisson point process on $\mathbb{R}_{+}$ with rate $t$, i.e.
\[
P(C_1>u)=e^{-\int_{0}^{u} t dt}=e^{-u^2/2}.
\]
Conditionally on $C_1$, let $\mathcal{R}_1$ be the line-segment $[0,C_1)$ of length $C_1$. Proceeding inductively, for each $i\ge 1$, and conditionally on $C_i$, obtain $\mathcal{R}_{i+1}$ from $\mathcal{R}_i$ by attaching the line-segment $[C_i,C_{i+1})$ of length $C_{i+1}-C_i$ to a uniform point of $\mathcal{R}_i$ sampled with respect to the normalized Lebesgue measure on the line-segments. The metric space closure of the union of all these line-segments built from the whole $\mathbb{R}_{+}$ is the Brownian CRT. Write $d_{\mathcal{T}}$ for its distance.
\end{definition}
 
The CRT was initially defined by Aldous \cite{aldous1991tree} with this formalism, but in Section \ref{prelim} we introduce the formalism of compact real trees coded by functions of which the CRT is the canonical random example. See Definition \ref{realtres}, which corresponds to Corollary 22 in \cite{aldous1993III}.

\begin{definition} [Gaussian Free Field (GFF) on the CRT] \label{gff}
Let $\mathcal{T}$ be the CRT, a real tree coded by a normalized Brownian excursion, with root $\rho$ and canonical metric $d_{\mathcal{T}}$ given by \eqref{natura12}. We consider the $\mathbb{R}^d$-valued Gaussian process $(\phi(\sigma))_{\sigma\in \mathcal{T}}$ whose distribution is characterized by 
\begin{align*}
&\mathbf{E} \phi(\sigma)=0
\\
&\textnormal{Cov}(\phi(\sigma),\phi(\sigma'))=d_{\mathcal{T}}(\rho,\sigma\wedge \sigma') I,
\end{align*}
where $I$ denotes the $d$-dimensional identity matrix and $\sigma\wedge \sigma'$ denotes the most recent common ancestor of $\sigma$ and $\sigma'$.
\end{definition}

We will refer to the GFF on the CRT as the tree-indexed Gaussian process or the random Gaussian potential for reasons that will become apparent in the remainder of this article. 

Next, we describe our first model in two steps. The first ingredient is to consider a critical Galton-Watson branching process with offspring law $p(\cdot)$, which satisfies:

\begin{itemize}

\item $p(\cdot)$ is not supported on a sub-lattice of $\mathbb{Z}$,

\item $\sum_{k=0}^{\infty} p(k)=1$,

\item $\sum_{k=0}^{\infty} k p(k)=1$ (excluding the case when $p(1)=1$),

\item $\sum_{k=0}^{\infty} k^2 p(k)\in (0,\infty)$,

\item $\sum_{k=0}^{\infty} e^{\lambda k} p(k)<\infty$, for some $\lambda>0$.

\end{itemize}

The second ingredient is to regard a branching random walk (BRW) as the range of an embedding of the family tree $T$ resulting from the critical Galton-Watson branching process with offspring law $p(\cdot)$. To see this, label edges $(e)_{e\in E(T)}$ with i.i.d. random variables $(y(e))_{e\in E(T)}$ distributed as a mean 0 continuous random variable $Y$ on $\mathbb{R}^d$. Define a tree-indexed random walk $\phi: V(T)\to \mathbb{R}^d$ by assigning the spatial location $\phi(\rho):=0$ to the root $\rho$ of $T$, and by setting:
\[
\phi(v):=\sum_{e\in E_{\rho,v}} y(e), \qquad v\in V(T)\setminus \{\rho\},
\]
where $E_{\rho,v}$ denotes the subset of $E(T)$ containing the edges in the shortest path from $\rho$ to $v$ in $T$. 
This rule assigns a spatial location $\phi(v)$ to the (non-root) particle $v$. If $D_k$ denotes the collection of vertices that belong to the $k$-th generation of the family tree $T$, observe that $(\phi(v))_{v\in D_k}$ are not independent. The pair $(T,\phi)$ is usually called a random spatial tree under a law that we will denote by $\mathbf{P}$.
We will also use the term critical BRW to refer to this object. Finally, $(T,\phi)$ can be obtained as a subgraph of $\mathbb{R}^d$, letting $\mathcal{G}$ be the graph with vertex set given by
\[
V(\mathcal{G}):=\{x\in \mathbb{R}^d: x=\phi(u) \text{ with } u\in V(T)\}
\]
and edge set
\[
E(\mathcal{G}):=\{\{x_1,x_2\}\in E(\mathbb{R}^d): x_i=\phi(u_i) \text{ with } \{u_1,u_2\}\in E(T)\}.
\]
Obviously, since the increments $(y(e))_{e\in E(T)}$ of the spatial component $\phi$ are i.i.d. random variables distributed as a mean 0 continuous random variable $Y$, the range $\mathcal{G}$ is a tree. In this article, we will be interested in large critical BRWs, where the family tree is drawn from $\mathbf{P}_n(\cdot):=\mathbf{P}(\cdot ||T|=n)$, which is asymptotically well-defined under the assumption that $p(\cdot)$ is not supported on a sub-lattice of $\mathbb{Z}$. We denote the random spatial tree by $(T_n,\phi_n)$ and the corresponding embedded subtree by $\mathcal{G}_n$.

Now, we are able to present our model of the biased random walk. Given a configuration $\mathcal{G}_n$, we consider the reversible Markov chain $X_n$ on $\mathcal{G}_n$ with law $\mathbf{P}_{\mathcal{G}_n}$, whose transition probabilities $P_{\mathcal{G}_n}(x,y)$ for $x, y\in \mathcal{G}_n$ are defined by

\begin{itemize}

\item $X_0=0$, $\mathbf{P}_{\mathcal{G}_n}$-a.s.,

\item 
$
P_{\mathcal{G}_n}(x,y)=\displaystyle \frac{c(\{x,y\})}{\sum_{z\sim x} c(\{x,z\})},
$

\end{itemize}
where $z\sim x$ means that $z$ and $x$ are adjacent in $\mathcal{G}_n$. In particular, for a fixed bias parameter $\beta>1$, we set
\begin{equation} \label{conduct}
c(\{x,y\})=
\begin{cases}
\beta^{\max\{\phi_n^{(1)}(u),\phi_n^{(1)}(v)\}}, &\text{ if } x\sim y \text{ and } x=\phi_n(u), \ y=\phi_n(v), 
\\
0, &\text{ otherwise},
\end{cases}
\end{equation}
where $\phi_n^{(1)}(u)$ denotes the first coordinate of $\phi_n(u)$ in $\mathbb{R}^d$. The random variable $c(\{x,y\})$ is called the conductance of $\{x,y\}\in E(\mathcal{G}_n)$ in the configuration $\mathcal{G}_n$, a term which is indicative of the connection between reversible Markov chains and electrical networks, for an outline of which the reader may refer to \cite{levin2017second}. The invariant measure with respect to which $X_n$ is reversible is given by
\begin{equation} \label{conduct1}
c(\{x\}):=\sum_{z\sim x} c(\{x,z\}).
\end{equation}

\begin{theorem} \label{grande}
Consider the weakly biased random walk $(X_m^n)_{m\ge 1}$ on $T_n$ with bias parameter $\beta^{n^{-1/4}}$, for some $\beta>1$. Then,
\[
\left(n^{-1/4} \phi_n(X_{n^{3/2} t}^n)\right)_{t\ge 0}\xrightarrow{(d)} \left(\Sigma_{\phi} \phi(X_{t \sigma_T^{-1}})\right)_{t\ge 0},
\]
where $\sigma_T>0$ is a constant and $\Sigma_{\phi}$ is a positive definite $d\times d$-matrix given below \eqref{arcsofbm}, $(X_t)_{t\ge 0}$ is a Brownian motion in a random Gaussian potential $\phi^{(1)}$ on the CRT, $\phi^{(1)}$ is the first coordinate of a tree-indexed Gaussian process $(\phi(\sigma))_{\sigma\in \mathcal{T}}$. The convergence is annealed and occurs in $D(\mathbb{R}_{+},\mathbb{R}^d)$.

\end{theorem}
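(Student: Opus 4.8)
The plan is to realise the weakly biased random walk as a constant-speed random walk on $T_n$ carrying a \emph{distorted} resistance metric and measure, and then to invoke the functional limit theorem of \cite{croydon2016scaling}. Since the walk is reversible it is the Markov chain of the electrical network on $T_n$ whose edge $e_x$ joining a vertex $x$ to its parent $\overleftarrow{x}$ carries conductance $c_n(e_x)=\beta^{\,n^{-1/4}\phi_n^{(1)}(x)}=e^{-V_n(x)}$, where $\phi_n^{(1)}$ is the branching random walk coordinate along the biased direction and $V_n(x)=-(n^{-1/4}\log\beta)\,\phi_n^{(1)}(x)$ is the potential of the random environment; compare \eqref{resist1}, \eqref{resist12} and \eqref{resist2}. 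After a deterministic time change one thereby views $(X^n_m)$ as the process associated with the resistance metric measure space $(T_n,R_n,\mu_n)$, with $R_n$ the resistance metric of the network and $\mu_n$ a normalisation of $x\mapsto\sum_{e\ni x}c_n(e)$. The r\^ole of the bias $\beta^{\,n^{-1/4}}$ --- the ``flattening'' --- is that along the CRT-scale tree $T_n$, which has diameter of order $n^{1/2}$, the branching random walk accumulates increments over paths of length of order $n^{1/2}$, so $\phi_n$ is of order $n^{1/4}$; hence $V_n$ is of order one, and in fact $\sup_{x\in T_n}|V_n(x)|$ is tight, so the conductances $c_n$ are, uniformly in $n$, bounded above and below by random positive constants. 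This is the essential simplification over the one-dimensional Sina\u{\i} regime, where the potential is unbounded.

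Next I would prove the joint Gromov--Hausdorff-vague convergence of the rescaled tree, its mass measure, its root and the rescaled branching random walk,
\[
\bigl(T_n,\ \sigma_T^{-1}n^{-1/2}d_{T_n},\ n^{-1}\textstyle\sum_{x\in T_n}\delta_x,\ \rho_n,\ n^{-1/4}\phi_n\bigr)\ \xrightarrow{(d)}\ \bigl(\mathcal{T},\,d_{\mathcal{T}},\,\mu_{\mathcal{T}},\,\rho,\,\phi\bigr),
\]
towards the CRT equipped with its mass measure, root and the tree-indexed Gaussian process of Definition~\ref{gff}; this combines Aldous' theorem with the convergence of a branching random walk indexed by a size-conditioned Galton--Watson tree to the tree-indexed Brownian motion. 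Because $R_n$ and $\mu_n$ are obtained from $(T_n,d_{T_n},\phi_n^{(1)})$ by the explicit formulae behind \eqref{resist12} and \eqref{resist2} --- integrating $e^{V_n}$ along branches and $e^{-V_n}$ against the mass measure --- and since $V_n$ is a continuous functional of $n^{-1/4}\phi_n^{(1)}$ that converges uniformly on compacts and is globally tight, a continuous-mapping argument upgrades the display above to
\[
\bigl(T_n,\ a_nR_n,\ b_n\mu_n,\ \rho_n\bigr)\ \xrightarrow{(d)}\ \bigl(\mathcal{T},\,\tilde d_{\mathcal{T}},\,\tilde\mu_{\mathcal{T}},\,\rho\bigr),
\]
with $\tilde d_{\mathcal{T}}$ built from $\phi^{(1)}$ as in \eqref{distdist1}, $\tilde\mu_{\mathcal{T}}$ as in \eqref{distmeas1}, and scaling sequences $a_n\asymp n^{-1/2}$, $b_n\asymp n^{-1}$ carrying constants depending on $\log\beta$ and the covariance of $Y$; this limiting resistance metric measure space carries, as its associated process, the Brownian motion in the random Gaussian potential $\phi^{(1)}$ on the CRT.

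It then remains to verify the uniform recurrence hypothesis of \cite{croydon2016scaling}, which rules out explosion: a uniform-in-$n$ bound on the product of the resistance from $\rho_n$ to the complement of a ball against the measure of that ball. Here the global boundedness of $V_n$ pays off --- $R_n$ is comparable to the graph metric and $\mu_n$ to the counting measure up to a tight random factor --- so the estimate reduces to the known volume and resistance bounds for the simple random walk on size-conditioned Galton--Watson trees. Applying \cite{croydon2016scaling} then gives that the constant-speed version of $(X^n_m)$ run on $(T_n,a_nR_n,b_n\mu_n)$ converges to the Brownian motion $X$ in the random Gaussian potential $\phi^{(1)}$ on the CRT; matching the time scale $a_nb_n\asymp n^{-3/2}$ against $n^{3/2}t$ steps produces the limiting time $t\sigma_T^{-1}$. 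Finally I would compose with the evaluation map: since $\mathcal{T}$ is compact and $\phi$ is a.s.\ continuous, and $n^{-1/4}\phi_n\to\phi$ uniformly along the correspondences realising the convergence above, embedding everything into a common space and applying the continuous mapping theorem transfers the convergence of $X^n$ to $\bigl(n^{-1/4}\phi_n(X^n_{n^{3/2}t})\bigr)_{t\ge0}\xrightarrow{(d)}\bigl(\Sigma_\phi\,\phi(X_{t\sigma_T^{-1}})\bigr)_{t\ge0}$ in $D(\mathbb{R}_{+},\mathbb{R}^d)$, with $\Sigma_\phi$ recording the covariance of $Y$ (the biased coordinate being singled out); the argument is carried out under the annealed law, which is the regime in which \cite{croydon2016scaling} delivers its conclusion once the space convergence holds in distribution. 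The precise form of the statement is Theorem~\ref{definitive}.

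The delicate step, and the one I expect to absorb most of the work, is the passage from convergence of $(T_n,d_{T_n},\phi_n)$ to convergence of the \emph{distorted} space $(T_n,R_n,\mu_n)$ in a form strong enough for \cite{croydon2016scaling}: one must control the exponential distortion not merely on compact parts of the tree, where uniform convergence of $n^{-1/4}\phi_n^{(1)}$ suffices, but globally, so that measures of balls and resistances across annuli stay tight. This needs tail estimates for $\max_{x\in T_n}|\phi_n^{(1)}(x)|$, the maximal displacement of the branching random walk on a tree of size $n$, combined with the volume and resistance estimates for the underlying Galton--Watson tree, together with a verification that the spatial element $\phi_n$ behaves well with respect to the Gromov--Hausdorff correspondences used to compare $T_n$ with $\mathcal{T}$.
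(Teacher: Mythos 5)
Your proposal follows the same route as the paper: interpret the weakly biased walk as the reversible Markov chain of an electrical network on $T_n$ whose potential $V_n$ is driven by $\phi_n^{(1)}$, prove spatial Gromov--Hausdorff-vague convergence of the distorted resistance metric measure tree $(T_n,n^{-1/2}r_n,(2n)^{-1}\nu_n,\rho_n)$ together with $n^{-1/4}\phi_n$ to $(\mathcal{T},\sigma_T r_{\phi^{(1)}},\nu_{\phi^{(1)}},\rho)$ with $\Sigma_\phi\phi$ (the paper's Theorem~\ref{bohren}), and then apply \cite{croydon2016scaling} via Theorem~\ref{crucial} and compose with the embedding, exactly as the paper does in deriving Theorem~\ref{definitive}. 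Two minor inaccuracies in your write-up do not affect the argument: the uniform recurrence condition of \cite{croydon2016scaling} concerns only the resistance from the root to the complement of a ball (not a product with the volume) and is here satisfied trivially because the rescaled spaces are compact, and the Skorohod coupling of the contour/head functions behind \eqref{arcsofbm} already gives uniform control of $n^{-1/4}\phi_n^{(1)}$ over the whole tree, so no separate tail estimate on the maximal displacement is required.
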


At this point it would be also helpful to give heuristics on our space-time scaling as well as the scaling of the parameters involved.   
Given $T_n$, whenever $v, v'\in T_n$ are connected by an edge (in symbol $v\sim v'$), put $c_n(\{v,v'\}):=(c(\{v,v'\}))^{n^{-1/4}}$, and
\[
\tilde{r}_n(v,v'):=n^{-1/2} c_n(\{v,v'\})^{-1}=n^{-1/2} \beta^{-n^{-1/4} \max\{\phi_n^{(1)}(v),\phi_n^{(1)}(v')\}},
\]
which is the inverse of the conductance (rescaled by $n^{-1/2}$) specified in \eqref{conduct} when the bias parameter is $\beta^{n^{-1/4}}$ instead. Moreover, let 
\[
\tilde{\nu}_n(\{v\}):=\frac{\sum_{v\sim v'} c_n(\{v,v'\})}{2n}
= \frac{\sum_{v\sim v'} \beta^{n^{-1/4} \max\{\phi_n^{(1)}(v),\phi_n^{(1)}(v')\}}}{2n}, 
\]
which denotes the respective measure (rescaled by $(2 n)^{-1}$) specified in \eqref{conduct1}.

Given the random spatial tree $(T_n,\phi_n)$ and the corresponding embedded subtree $\mathcal{G}_n$, the continuous-time nearest neighbor random walk on $(T_n,\tilde{r}_n)$ with exponential jump rates 
\begin{align*}
q_n(v)&:=\frac{1}{2 \tilde{\nu_n}(\{v\})} \sum_{v\sim v'} \tilde{r_n}(v,v')^{-1}
\\
&=\frac{1}{2}\cdot \frac{2n}{\sum_{v\sim v'} \beta^{n^{-1/4} \max\{\phi_n^{(1)}(v),\phi_n^{(1)}(v')\}}}\cdot  n^{1/2} \sum_{v\sim v'} \beta^{n^{-1/4} \max\{\phi_n^{(1)}(v),\phi_n^{(1)}(v')\}}=n^{3/2},
\end{align*}
when continuously embedded into $\mathbb{R}^d$ by $\tilde{\phi}_n:=n^{-1/4} \phi_n$, and with the edge lengths of $\mathcal{G}_n$ rescaled by $n^{-1/4}$, is the embedded weakly biased random walk $(\tilde{\phi}_n (X^n_{n^{3/2} t}))_{t\ge 0}$ with jumps rescaled by $n^{-1/4}$ and time speeded up by a factor $n^{3/2}$. 

In Theorem \ref{bohren}, we will show that $((T_n,\tilde{r}_n,\tilde{\nu}_n),\tilde{\phi}_n)$ converges Gromov-Hausdorff-vaguely in distribution to $((\mathcal{T},\sigma_T r_{\phi^{(1)}},\nu_{\phi^{(1)}}),\Sigma_{\phi} \phi)$, where $(\mathcal{T},\sigma_T r_{\phi^{(1)}})$ is the CRT endowed with a distorted metric (see \eqref{distdist1}), $\nu_{\phi^{(1)}}$ is the distorted measure specified in \eqref{distmeas1}, and $\phi$ is the tree-indexed Gaussian process from Definition \ref{gff}. We will then conclude from Theorem \ref{crucial} that the embedded weakly biased random walk with jumps rescaled  by $n^{-1/4}$ and time speeded up by a factor of $n^{3/2}$ converges to the $\nu_{\phi^{(1)}}$-Brownian motion on $(\mathcal{T},r_{\phi^{(1)}})$, when continuously embedded into $\mathbb{R}^d$ by $\phi$. 

We believe that our work offers a promising candidate for the scaling limit of a biased random walk on the incipient infinite cluster (IIC) 
of critical Bernoulli-bond percolation on $\mathbb{Z}^d$ in high dimensions, that is when $d>6$.
Our declaration is justified in the sense that the critical behavior of branching random walk is closely related to the critical behavior of percolation in high dimensions, and therefore it is expected that both models satisfy the same scaling properties (see \cite{heyden2017progress} for an up-to-date survey). Attempting to give a plausible answer to \cite[Question 5.3]{arous2016biased} posed by Ben Arous and Fribergh, the right scaling for a biased random walk on the IIC of $\mathbb{Z}^d$ is that of a random walk with a weak cartesian bias to a single direction, identical to the one introduced in Theorem \ref{grande}, with the limit being a Brownian motion in a random Gaussian potential that maps an infinite version of the CRT to the Euclidean space, or alternatively, a Brownian motion in a random Gaussian potential on the integrated super-Brownian excursion (ISE) (the Brownian motion on the latter object was constructed for $d\ge 8$ by Croydon \cite{croydon2009spatial}). Just as critical branching random walk is a mean-field model for percolation, critical branching random walk conditioned on survival is a mean-field model for the high-dimensional IIC, which explains why an unbounded version of the Brownian CRT is expected to appear in the limit.
As for establishing the corresponding limit for the weakly biased random walk on critical lattice branching random walk, \cite{arous2016high} outlines a program of four conditions to be checked in order to provide a flexible scaling theorem that will be generally applicable or adaptable to several models. In this direction, it would be a meaningful project to check, as it was done for the simple random walk on critical lattice branching random walk in \cite{arous2016simple}, whether those conditions are satisfied, utilising the connection between distorted resistance metrics and random walks in random environments that the present article suggests.

Finally, we demonstrate an appealing application to non-Markovian settings. The edge-reinforced random walk (ERRW) was introduced by Coppersmith and Diaconis in 1986 (for references on the ERRW, see also \cite{angel2014localization}, \cite{diaconis1988recent}, \cite{diaconis2006bayes}, \cite{keane2000edge}) as a discrete process on the vertices of undirected graphs, starting from a fixed vertex. Given initial weights to all edges, whenever an edge is crossed the weight of that edge increases by one. The transition, through edges leading out of a particular vertex chosen, has probability proportional to their various weights. In the context of the ERRW on trees \cite{robin1988phase} (for a definition, see Section \ref{errwtrees}), due to the absence of cycles, the transitions of the process are decided by independent P\'olya urns, one per vertex, where edges leading out play the role of colours and initial weights that of the number of balls of each colour. The ERRW on other undirected graphs by Sabot and Tarr\`es \cite{sabot2015edge} is a random walk in a correlated, but explicit, random environment.

It was not until recently that a scaling limit of the ERRW on the dyadic one-dimensional lattice appeared in \cite{lupu2018scaling}. The scaling limit introduced is a one-dimensional diffusion in a random potential that contains a scale-changed two-sided Brownian with a drift. We introduce the scaling limit of the ERRW on a critical Galton-Watson tree $T_n$ with finite variance, conditioned to have total population size $n$, as a Brownian motion in a random Gaussian potential with a drift on the CRT. More formally, we state our last result below. For a definitive statement see Theorem \ref{competent}.

\begin{theorem} \label{grande0}
Consider the ERRW $(Z_k^n)_{k\ge 1}$ on $T_n$, started at its root $\rho_n$, with initial weights given by 
$\alpha_0^n(e)=2^{-1} n^{1/2}$, $e\in E(T_n)$.
Then, 
\[
\left(n^{-1/2} Z^n_{n^{3/2} t}\right)_{t\in [0,1]}\xrightarrow{(d)} (Z_{t \sigma_T^{-1}})_{t\in [0,1]},
\]
where $\sigma_T>0$ is a constant, $(Z_t)_{t\ge 0}$ is a Brownian motion in a random potential 
$2 (\sqrt{2} \phi+d_{\mathcal{T}}(\rho,\cdot))$
on the CRT, started at $\rho$.
\end{theorem}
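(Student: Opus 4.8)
The plan is to realise the ERRW as a reversible random walk on an explicit electrical network and then apply the functional limit theorem of \cite{croydon2016scaling} (Theorem~\ref{crucial}), mirroring the proof of Theorem~\ref{grande}. First I would condition on $T_n$ and use the P\'olya-urn/de~Finetti representation of the ERRW on a tree: conditionally on the limiting exit-frequency vectors at the vertices --- independent symmetric Dirichlet vectors whose parameter is of order $n^{1/2}$ (equal to $\tfrac14 n^{1/2}$ once one accounts for the fact that, on a tree, each excursion out of a vertex increments the edge it uses twice) --- the walk is a Markov chain, hence reversible, hence the random walk on $T_n$ with conductances $c_n(e)$ that are telescoping products of ratios of Dirichlet coordinates along the rays from $\rho_n$. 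Writing $c_n$ through a potential $V_n\colon T_n\to\mathbb R$, this identifies the distorted resistance metric $r_n$ and the reversible measure $\nu_n$ of the associated RWRE and reduces the theorem to the scaling limit of the marked metric measure space $((T_n,\tilde r_n,\tilde\nu_n),V_n)$ with $\tilde r_n=n^{-1/2}r_n$ and $\tilde\nu_n=(2n)^{-1}\nu_n$, exactly the type of input $((\mathcal T,\sigma_T r,\nu),W)$ demanded by Theorem~\ref{crucial}.

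The core step is then to show that $((T_n,\tilde r_n,\tilde\nu_n),V_n)$ converges Gromov--Hausdorff-vaguely, jointly with the spatial mark and in the annealed sense, to $((\mathcal T,\sigma_T r_W,\nu_W),W)$, where $W=2(\sqrt2\,\phi+d_{\mathcal T}(\rho,\cdot))$, $r_W$ and $\nu_W$ are the distorted metric and measure built from $W$, and $\phi$ is the tree-indexed Gaussian process of Definition~\ref{gff}. The field $V_n$ is a branching-random-walk-type field indexed by $T_n$ whose edge-increments are differences of two coordinates of a symmetric Dirichlet vector; these are \emph{centred}, with variance of order $n^{-1/2}$ and light tails. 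The fluctuation part converges to $2\sqrt2\,\phi$ by a branching-random-walk scaling limit of the kind underlying Theorem~\ref{grande} (Theorem~\ref{bohren}) together with Aldous' invariance principle $n^{-1/2}T_n\to\mathcal T$. The deterministic drift $2\,d_{\mathcal T}(\rho,\cdot)$, however, is \emph{not} visible at the level of individual increments, which are centred: it is an emergent effect of conditioning on a \emph{critical} tree. Indeed, at a vertex with $m$ children the branching-weighted increment $(1-Q_0)/Q_0$ --- with $Q_0$ the transition probability back to the parent and the $Q_c$ those to the children --- has mean $m\,\alpha/(\alpha-1)=m\bigl(1+O(\alpha^{-1})\bigr)$, so $V_n$ is run on an \emph{effectively slightly supercritical} tree with per-generation expansion of order $n^{-1/2}$, which accumulated over the $\asymp n^{1/2}$ generations of a typical path of the conditioned tree produces the $O(1)$ linear-in-depth contribution $2\,d_{\mathcal T}(\rho,\cdot)$. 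Continuous mapping then delivers $r_W$ and $\nu_W$ as the appropriate exponential functionals of $W$, and the numerical constants (and $\sigma_T$) are fixed by this bookkeeping and the offspring variance.

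To conclude I would verify the uniform recurrence/non-explosion hypothesis of \cite{croydon2016scaling} --- on a tree this reduces to uniform-in-$n$ bounds on the effective resistance from $\rho_n$ out to radius $R$ and on the total mass $\tilde\nu_n(T_n)$, which follow from exponential moment estimates for $V_n$ along paths and for the height profile of $T_n$ --- and then apply Theorem~\ref{crucial} to obtain convergence of the associated continuous-time processes; matching jump rates as in the heuristics after Theorem~\ref{grande} fixes the time scaling $n^{3/2}$, and a routine comparison between the discrete-time ERRW and its variable-speed continuous-time counterpart (legitimate since $\nu_n$ is comparable to counting measure up to bounded factors) transfers the statement to $(Z_k^n)_{k\ge1}$. \textbf{I expect the main obstacle to be the joint Gromov--Hausdorff-vague convergence of the tree together with its reinforcement potential, and specifically the rigorous production of the $d_{\mathcal T}(\rho,\cdot)$ drift.} Since the raw increments are exactly centred, this cannot be quoted from an off-the-shelf branching-random-walk-to-Gaussian-field result; it calls for a change of measure --- a spine decomposition of $T_n$ biased by a martingale of the form $\bigl(\alpha/(\alpha-1)\bigr)^{-k}\sum_{|v|=k}e^{-V_n(v)}$, equivalently a size-biasing of the coding walk --- together with enough uniformity that the martingale limit persists as $n\to\infty$ and couples correctly to the mass measure of the CRT. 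The recurrence verification and the discrete-to-continuous time comparison, while necessary, are comparatively routine within the resistance-form framework.
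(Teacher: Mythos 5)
Your high-level plan is the right one and matches the paper's: represent the ERRW on $T_n$ as a reversible RWRE, identify the resulting distorted resistance metric $r_n$ and reversible measure $\nu_n$, prove joint Gromov--Hausdorff-vague convergence of $((T_n,n^{-1/2}r_n,(2n)^{-1}\nu_n),V_n)$ to $((\mathcal T,\sigma_T r_{\mathcal U},\nu_{\mathcal U}),2\mathcal U)$, check the uniform recurrence hypothesis, and invoke Theorem~\ref{crucial}. You also correctly identify the single genuine difficulty: producing the drift term $d_{\mathcal T}(\rho,\cdot)$ in the limiting potential. Where your plan goes wrong is in the \emph{mechanism} you propose for that drift.

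You start from Pemantle's P\'olya-urn/Dirichlet representation and assert that the per-vertex exit-frequency vectors are \emph{symmetric} Dirichlet, hence that the potential increments $\log(Q_0/Q_c)$ are exactly centred; you then try to manufacture the drift as a second-order ``effective supercriticality'' of the weighted branching number $E[(1-Q_0)/Q_0]=m\alpha/(\alpha-1)$, repaired by a spine decomposition/change of measure. This is built on a false premise. At a non-root vertex the P\'olya urn is \emph{not} symmetric: the parent edge is first crossed on arrival, so its limiting Dirichlet parameter is larger than the children's by a fixed additive shift (Pemantle's exit law is $\mathrm{Beta}((a+1)/2,\,da/2)$ for the parent component when all children are symmetric). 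Consequently $E[\log(Q_0/Q_c)]\asymp 1/a\asymp n^{-1/2}$ is strictly positive per edge, and summed over the $\asymp n^{1/2}$ generations of a conditioned critical tree this accumulates to the $O(1)$ drift $\propto d_{\mathcal T}(\rho,\cdot)$. The drift is a first-order, law-of-large-numbers effect already present in the mean of the increments, not an emergent size-biasing effect; your $m\alpha/(\alpha-1)$ computation is instead a Jensen-type bias in $E[e^{-V}]$ and is not what drives the potential. No change of measure or spine martingale is needed (or used).

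The paper takes a route that makes this transparent. Instead of the Dirichlet representation it uses the Sabot--Tarr\`es mixing measure (Theorem~\ref{readoff}): conditionally on Gamma variables $\alpha^n(e)\sim\Gamma(2^{-1}n^{1/2},1)$, the edge marks $\omega^n(e)$ are independent with the explicit $\sinh$-density \eqref{inversegamma}, whose conditional mean is $(2\alpha^n(e))^{-1}\asymp n^{-1/2}$ and conditional variance $\asymp \alpha^n(e)^{-1}\asymp n^{-1/2}$. Lemma~\ref{before} then gives $\tfrac12\sum_e\alpha^n(e)^{-1}\to d_{\mathcal T}(\rho,\cdot)$ along tree paths, Lemma~\ref{drifted1} combines a KL/Pinsker comparison of the $\sinh$-density with $N(0,1)$ and a martingale functional CLT to get $\mathcal U^n\to\sqrt2\,\phi+d_{\mathcal T}(\rho,\cdot)$, and the potential $\mathcal V^n(u)=\mathcal U^n(\vec u)+\mathcal U^n(u)+\log\alpha^n(\{\vec u,u\})^{-1}$ converges to $2\mathcal U$. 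The rest is exactly the machinery you anticipate: the analogue of Theorem~\ref{bohren} for the distorted metric and measure, compactness to verify \eqref{non2}, and Theorem~\ref{crucial}. So your skeleton is sound, but to complete it in your own framework you would need to (i) use the \emph{asymmetric} Dirichlet law, read the drift directly off its digamma mean, and drop the spine/change-of-measure step, or (ii) switch to the Sabot--Tarr\`es representation as the paper does, which handles the drift and fluctuation in one stroke.
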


The article is organised as follows. In Section \ref{prelim}, we
give the necessary definitions of metric measure trees, such as real trees coded by functions. In Section \ref{topcon}, we present the Gromov-Hausdorff-vague topology between metric measure trees that are embedded  nicely into a common metric space. In Section \ref{rwre}, we introduce the random walk in random environment on locally finite ordered trees as a resistor network with conductances and stationary reversible measure given in terms of its potential, while Section \ref{setup} ties together the preliminary work done in the previous sections to yield the convergence of the random walks in random environments under Assumption \ref{sinai}, as a corollary of the main contribution of \cite{croydon2016scaling}. Finally, in Section \ref{contri}, along with extending Seignourel's result in \cite{seignourel2000discrete} to hold for a wider class of environments, we prove Theorem \ref{grande} and Theorem \ref{grande0}.

\section{Preliminaries} \label{prelim}

The definitions of boundedly finite pointed metric measure trees appeared in the course of extending results that hold for real-valued Markov processes to Markov processes that take values in tree-like spaces.
We refer to \cite{athreya2017invariance} for the preliminary work we do here.  

A pointed metric space $(T,r,\rho)$ with a distinguished point $\rho$ is called Heine-Borel if $(T,r)$ has the Heine-Borel property, i.e. each closed bounded set in $T$ is compact. Note that this implies that $(T,r)$ is complete, separable and locally compact.

\begin{definition} [rooted metric measure trees]
A rooted metric tree is a pointed Heine-Borel space $(T,r,\rho)$ if it satisfies the four point condition,
\[
r(u_1,u_2)+r(u_3,u_4)\le \max\{r(u_1,u_3)+r(u_2,u_4),r(u_1,u_4)+r(u_2,u_3)\},
\]
for every $u_1,u_2,u_3,u_4\in T$, and if for every $u_1,u_2,u_3\in T$ there exists a unique point $u:=u(u_1,u_2,u_3)\in T$, such that 
\[
r(u_i,u_j)=r(u_i,u)+r(u,u_j),
\]
for every $i,j\in \{1,2,3\}$ with $i\neq j$. The point $u$ is usually called the branch point, and the distinguished point $\rho$ is referred to as the root.

A rooted metric measure tree $(T,r,\nu,\rho)$ is a rooted metric tree $(T,r,\rho)$ equipped with a measure $\nu$ that has full support on $(T,\mathcal{B}(T))$,  where $\mathcal{B}(T)$ denotes the Borel $\sigma$-algebra of $(T,r)$, and charges every bounded set with finite measure.
\end{definition}

\begin{remark}
The property of containing the branch points in the previous definition was added to exclude non-tree graphs such as the triangle graph, which is a planar undirected graph with 3 vertices and 3 edges in the form of a triangle. It has a vertex set which satisfies the four point condition with respect to the graph-distance while the property of containing the branch points fails. 
\end{remark}

In a rooted metric tree $(T,r,\rho)$, for $x,y\in T$, we define the path intervals
\[
[[x,y]]:=\{z\in T: r(x,y)=r(x,z)+r(z,y)\},
\]
\[
[x,y]]:=[[x,y]]\setminus \{x\}, \qquad [x,y]:=[[x,y]]\setminus \{x,y\}.
\]
If $x\neq y$ and $[[x,y]]=\{x,y\}$, we say that $x$ and $y$ are connected by an edge in $T$ and use the notation $x\sim y$. Due to separability, a rooted metric tree can only have countably many edges. 
Denote the skeleton of $(T,r,\rho)$ as 
\[
\textnormal{Sk}(T):=\cup_{u\in T} [\rho,u]\cup \textnormal{Is}(T),
\]
where $\textnormal{Is}(T)$ is the set of isolated points of $(T,r,\rho)$, excluding the root. For any separable metric space that satisfies the four point condition, the notion of a length measure was introduced in \cite{athreya2017invariance}. In short, using that $\mathcal{B}(T)|_{\text{Sk}(T)}$ is the smallest $\sigma$-algebra that contains all the open path intervals with endpoints in a countable dense subset of $T$, the validity of the following statement, which we turn into a definition, is justified.

\begin{definition} [length measure]\label{lengthmeasure1}
There exists a unique $\sigma$-finite measure $\lambda$ on the rooted metric tree $(T,r,\rho)$, such that $\lambda(T\setminus \textnormal{Sk}(T))=0$ and for all $u\in T$,
\[
\lambda([\rho,u]])=r(\rho,u).
\]
Such a measure is called the length measure of $(T,r,\rho)$.
\end{definition}

If $(T,r)$ is a discrete tree, i.e. all the points in $T$ are isolated, the length measure shifts the length of an edge to the endpoint that is further away from the root, and therefore it does depend on the root, i.e. $\lambda(\{u\})=\lambda([\rho,u]])=r(\rho,u)$, for all $u\in T$.

The first definitions of random real trees date back to Aldous \cite{aldous1991tree}. Informally, real trees are metric trees that have a unique ``unit speed'' path between any two points, whereas the range of any injective path connecting two points coincides with the image of the ``unique unit'' speed path. Thus, the last requirement expresses the notion of ``tree-ness''. We refer to \cite{gall2006trees} for a general presentation of the topic.

\begin{definition} [real trees] \label{realtres}

A metric space $(T,r)$ is a real tree if the two following properties hold for every $x,y\in T$.

\item (i) It has a unique geodesic. There exists a unique isometry $f_{x,y}:[0,r(x,y)]\to T$ such that $f_{x,y}(0)=x$ and $f_{x,y}(r(x,y))=y$.

\item (ii) It does not contain cycles. If $q: [0,1]\to T$ is continuous and injective such that $q(0)=x$ and $q(1)=y$, then 
\[
q([0,1])=f_{x,y}([0,r(x,y)]).
\]
\end{definition}
Clearly \textit{(ii)} is not a consequence of \textit{(i)} since axiom \textit{(i)} is satisfied by many spaces such as $\mathbb{R}^n$ with the standard Euclidean distance, whereas axiom \textit{(ii)} is only satisfied by $\mathbb{R}^n$ when $n=1$. A real tree has no edges. Therefore, if $(T,r)$ is a real tree, then
\[
\textnormal{Sk}(T)=\cup_{u,v\in T} [u,v]. 
\]
The unique length measure that extends the Lebesgue measure on the real line coincides with the trace onto $\textnormal{Sk}(T)$ of the one-dimensional Hausdorff measure on $T$. To describe a method to generate random real trees, which will play a crucial role to our forthcoming applications, we turn our attention first to a deterministic setting. 
Let $g: [0,\infty)\to [0,\infty)$ be a continuous function with compact support, such that $g(0)=0$. We let
\[
\textnormal{supp}(g):=\{t\ge 0: g(t)>0\},
\]
denote the support of $g$. To avoid trivial cases, we assume that $g$ is not identical to zero. For every $s,t\ge 0$, let
$m_g(s,t):=\inf_{r\in [s\wedge t,s\vee t]} g(r)$
and $d_g: [0,\infty)\times [0,\infty)\to \mathbb{R}_{+}$ defined by
\begin{equation} \label{natura12}
d_g(s,t):=g(s)+g(t)-2 m_g(s,t).
\end{equation}
It is obvious that $d_g$ is symmetric and satisfies the triangle inequality. One can introduce the equivalence relation $s\sim t$ if and only if $d_g(s,t)=0$, or equivalently $g(s)=g(t)=m_g(s,t)$. Considering the quotient space
\[
(\mathcal{T}_g,d_g):=([0,\infty)_{/\sim},d_g),
\]
which we root at $\rho$, the equivalence class of 0,
it can be proven to be a rooted compact real tree (see \cite[Theorem 2.1]{gall2006trees}). We use the term real tree coded by $g$ to describe $\mathcal{T}_g$.
Denote by $p_g: [0,\infty)\to \mathcal{T}_g$ the canonical projection, which is extended by setting $p_g(u)=\rho$, for every $u\ge \text{supp}(g)$, so that the distance from $p_g(s)$ to $p_g(t)$ in $\mathcal{T}_g$ is 
\[
(g(s)-m_g(s,t))+(g(t)-m_g(s,t))=d_g(s,t), \qquad s, t\in [0,\text{supp}(g)],
\]
as illustrated by Figure \ref{figure1}. For every $A\in \mathcal{B}(\mathcal{T}_g)$, we let
\begin{equation} \label{imagmeas}
\mu_{\mathcal{T}_g}(A):=\ell(\{t\in [0,1]:p_e(t)\in A\})
\end{equation}
denote the image measure on $\mathcal{T}_g$ of the Lebesgue measure $\ell$ on $[0,1]$ by the canonical projection $p_g$.

\begin{figure}
\epsfbox{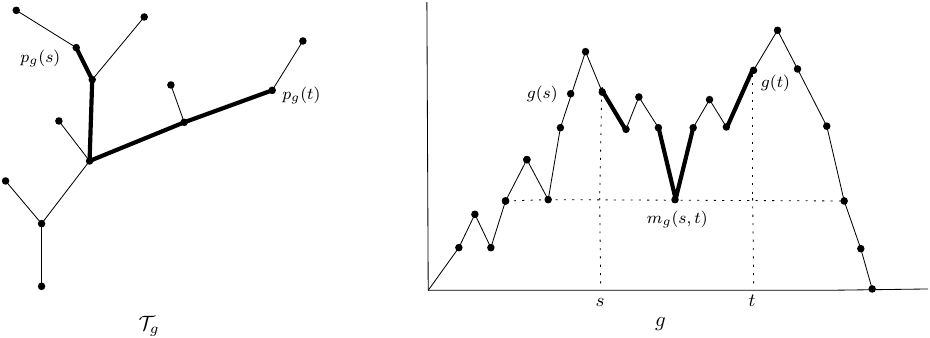}
\caption{}
\label{figure1}
\end{figure}

\section{Topological considerations} \label{topcon}

In a number of settings, for instance, in studying the weakly biased random walk on the range of critical non-lattice branching random walk in Section \ref{sec1}, it is relevant to consider the embedding into Euclidean space. Also, many self-similar fractals are naturally defined as subsets of $\mathbb{R}^d$ or some other metric space, and it might sometimes be more desirable to state the convergence of graphs to such fractals in that space, instead of an abstract metric space isometric with respect to their associated metrics. To take this on account, one can adapt the Gromov-Hausdorff-vague topology to include the case in which the spaces of interest are embedded into a common complete and separable metric space $(E,d_E)$ when the relevant embeddings are continuous (but not necessarily isometric) with respect to the metric that the spaces are endowed with. 

\begin{definition} [spatial rooted metric measure trees]
A $d$-dimensional spatial rooted metric measure tree is a pair $(\mathcal{T},\phi)$, where $\mathcal{T}=(T,r,\nu,\rho)$ is a rooted metric measure tree endowed with a continuous mapping $\phi:\mathcal{T}\to E$. 
\end{definition}

Note that the terminology spatial is borrowed from \cite[Section 6]{duqleg2005fractal}. It is worth mentioning that ``spatial'' convergence of metric measure trees is also known as convergence of marked metric measure spaces introduced in \cite{greven2011marked}. In fact we are looking at the particular case where the mark distribution comes from a mark function. A criterion for the existence of the latter is the subject of \cite{lohr2015marked}.

To define an equivalence relation on the space of spatial rooted metric measure trees we say that $(\mathcal{T},\phi):=((T,r,\nu,\rho),\phi)\sim (\mathcal{T'},\phi'):=((T',r',\nu',\rho'),\phi')$ if and only if there is a root-preserving isometry $f$ between $(T,r,\rho)$ and $(T',r',\rho')$ such that $\nu \circ f^{-1}=\nu'$ and $\phi'\circ f=\phi$, which is a shorthand of $\phi'(f(u))=\phi(u)$, for every $u\in T$. Denote by $\mathbb{T}_{\text{sp}}$ the space of equivalence classes of spatial rooted metric measure trees. 

Write $\mathbb{T}_{\text{sp}}^c$ for the subspace of $\mathbb{T}_{\text{sp}}$ that contains all the spatial rooted metric measure trees $((T,r,\nu,\rho),\phi)$ for which $(T,r)$ is compact. For two elements of $\mathbb{T}_{\text{sp}}^c$, say $(\mathcal{T},\phi)$ and $(\mathcal{T'},\phi')$ as before, we define their distance on $\mathbb{T}_{\text{sp}}^c$ to be 
\begin{align*}
&d_{\mathbb{T}_{\text{sp}}^c}\left((\mathcal{T},\phi),(\mathcal{T}',\phi')\right)
\\
:=&\inf_{\substack{Z,\psi,\psi',\mathcal{C}: \\ (\rho,\rho')\in \mathcal{C}}}\left\{d_Z^P(\nu \circ \psi^{-1},\nu'\circ \psi'^{-1})+\sup_{(z,z')\in \mathcal{C}}\left(d_Z(\psi(z),\psi'(z'))+d_E(\phi(z),\phi'(z'))\right)\right\},
\end{align*}
where the infimum is taken over all metric spaces $(Z,d_Z)$, isometric embeddings $\psi: (T,r)\to (Z,d_Z)$, $\psi':(T',r')\to (Z,d_Z)$ and correspondences $\mathcal{C}$ between $T$ and $T'$. A correspondence $\mathcal{C}$ between $T$ and $T'$ is a subset of the product space $T\times T'$ such that for every $z\in T$ there exists at least a $z'\in T'$ such that $(z,z')\in \mathcal{C}$ and vice versa for every $z'\in T'$ there is at least one $z\in T$ such that $(z,z')\in \mathcal{C}$. Moreover, $d_E$ denotes the distance on $E$ and $d_Z^P$ is the Prokhorov distance between Borel probability measures on $Z$. 

Before continuing, let us decipher the expression for $d_{\mathbb{T}_{\text{sp}}^c}$. The first part of the second term is one of the formulations of the standard Gromov-Hausdorff distance using correspondences as a way to define a distance between two abstract metric spaces that are not necessarily subsets of a common metric space, see \cite[Theorem 7.3.25]{burago2001course}. The standard Gromov-Hausdorff distance is the maximal distance that satisfies the two requirements that follow. First, the distance between subspaces in a common metric space is not bigger than the Hausdorff distance between them. Second, the distance between isometric metric spaces is zero. Incorporating the same ideas, the first term was added in \cite[Section 2.2, (6)]{abraham2013note} as a way to define a distance between two abstract metric measure spaces that are not necessarily subsets of a common metric measure space. Finally, the second part of the second term was introduced in \cite[Section 6]{duqleg2005fractal} as a means to provide a distance between trees embedded in space by a continuous function. It is possisble to check that $(\mathbb{T}_{\text{sp}}^c,d_{\mathbb{T}_{\text{sp}}^c})$ is a separable metric space \cite[Proposition 2.1]{andriopoulos2018convergence} (cf. \cite[Proposition 3.1]{barlow2017spanning}).

For two fixed metric spaces $(T,r,\nu,\rho)$ and $(T',r',\nu',\rho)$ and a subset $\mathcal{C}\subseteq T\times T'$, the distortion of $\mathcal{C}$ is defined as 
\[
\text{dis}(\mathcal{C}):=\sup \{|r(x,y)-r'(x',y')| :(x,x'),(y,y')\in \mathcal{C}\}.
\]
Given a Borel probability measure $\pi$ on $T\times T'$, with marginals $\pi_1$ and $\pi_2$, the discrepancy of $\pi$ with respect to $\nu$ and $\nu'$ is defined as 
\[
D(\pi;\nu,\nu'):=||\pi_1-\nu||_{\text{TV}}+||\pi_2-\nu'||_{\text{TV}},
\]
where $||\cdot||_{\text{TV}}$ denotes the total variation distance between signed measures. 
If $\nu$ and $\nu'$ are probability distributions, a  Borel probability measure on $T\times T'$ is a coupling of $\nu$ and $\nu'$ in the standard sense, if $D(\pi;\nu,\nu')=0$.
The following lemma gives an alternative description of $d_{\mathbb{T}_{\text{sp}}^c}$.

\begin{lemma} [\cite{berry2017minimal}] \label{christberry}
Let $(\mathcal{T},\phi),(\mathcal{T}',\phi')\in \mathbb{T}_{\textnormal{sp}}^c$. Then, the metric $d_{\mathbb{T}_{\textnormal{sp}}^c}$ between $(\mathcal{T},\phi)$ and $(\mathcal{T}',\phi')$ is also given by 
\[
d_{\mathbb{T}_{\text{sp}}^c}\left((\mathcal{T},\phi),(\mathcal{T}',\phi')\right)
:=\inf_{\substack{\pi,\mathcal{C}: \\ (\rho,\rho')\in \mathcal{C}}}\left\{\frac{1}{2} \textnormal{dis}(\mathcal{C})+D(\pi;\nu,\nu')+\pi(\mathcal{C}^c)+\sup_{(z,z')\in \mathcal{C}} d_E(\phi(z),\phi'(z'))\right\},
\]
where the infimum is taken over all correspondences and Borel probability measures on $T\times T'$.
\end{lemma}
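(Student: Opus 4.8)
The plan is to prove the asserted identity by a two-sided inequality between the two expressions for $d_{\mathbb{T}_{\text{sp}}^c}$: write $d^{(1)}$ for the infimum over metric spaces $(Z,d_Z)$, isometric embeddings $\psi,\psi'$ and correspondences $\mathcal{C}$ appearing in the definition in the text, and $d^{(2)}$ for the infimum over correspondences $\mathcal{C}$ and Borel probability measures $\pi$ on $T\times T'$ appearing in the statement; I would establish $d^{(1)}\le d^{(2)}$ and $d^{(2)}\le d^{(1)}$ separately. When the mark functions $\phi,\phi'$ are suppressed this is the classical equivalence of the embedding and correspondence forms of the Gromov--Hausdorff--Prokhorov distance (as in \cite[Proposition 2.1]{andriopoulos2018convergence}, cf.\ \cite[Proposition 3.1]{barlow2017spanning}), and the only analytic input beyond elementary manipulation of correspondences is Strassen's theorem: on a Polish space $(Z,d_Z)$ one has $d_Z^P(\mu,\mu')\le\eta$ if and only if there is a coupling $\gamma$ of $\mu,\mu'$ with $\gamma(\{(z_1,z_2):d_Z(z_1,z_2)>\eta\})\le\eta$. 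What must be added here is the bookkeeping for the mark term $\sup_{(z,z')\in\mathcal{C}}d_E(\phi(z),\phi'(z'))$, where I would follow \cite{berry2017minimal}.

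For $d^{(1)}\le d^{(2)}$, I would start from a correspondence $\mathcal{C}\ni(\rho,\rho')$ and a probability measure $\pi$ on $T\times T'$ and build the witnessing embedding explicitly: put $Z:=T\sqcup T'$, let $d_Z$ restrict to $r$ on $T$ and to $r'$ on $T'$, and for $x\in T$, $x'\in T'$ set
\[
d_Z(x,x'):=\inf\Bigl\{r(x,y)+\tfrac12\mathrm{dis}(\mathcal{C})+r'(y',x'):(y,y')\in\mathcal{C}\Bigr\}.
\]
Using the definition of $\mathrm{dis}(\mathcal{C})$ and the triangle inequalities for $r,r'$ one checks $d_Z$ is a pseudometric, which becomes a genuine metric after identifying points at zero distance (alternatively one inserts a vanishing $\delta>0$ inside the infimum and lets $\delta\downarrow0$ at the end); note that no two points of $T$, and no two points of $T'$, get identified, so the inclusions $\psi,\psi'$ remain isometric embeddings. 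Taking $y=z$, $y'=z'$ in the infimum gives $d_Z(\psi(z),\psi'(z'))\le\tfrac12\mathrm{dis}(\mathcal{C})$ on $\mathcal{C}$, so the supremum term of $d^{(1)}$ is at most $\tfrac12\mathrm{dis}(\mathcal{C})+\sup_{(z,z')\in\mathcal{C}}d_E(\phi(z),\phi'(z'))$, since $\mathcal{C},\phi,\phi'$ are unchanged. For the Prokhorov term I would push $\pi$ forward by $\psi\times\psi'$, obtaining a coupling of $\pi_1\circ\psi^{-1}$ and $\pi_2\circ\psi'^{-1}$ that charges $\{d_Z>\tfrac12\mathrm{dis}(\mathcal{C})\}$ by at most $\pi(\mathcal{C}^c)$, apply Strassen's theorem to get $d_Z^P(\pi_1\circ\psi^{-1},\pi_2\circ\psi'^{-1})\le\tfrac12\mathrm{dis}(\mathcal{C})+\pi(\mathcal{C}^c)$, and conclude with the triangle inequality together with $d_Z^P(\nu\circ\psi^{-1},\pi_1\circ\psi^{-1})\le\|\nu-\pi_1\|_{\mathrm{TV}}$ and its primed analogue, obtaining $d_Z^P(\nu\circ\psi^{-1},\nu'\circ\psi'^{-1})\le\tfrac12\mathrm{dis}(\mathcal{C})+D(\pi;\nu,\nu')+\pi(\mathcal{C}^c)$. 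Summing the two bounds and taking infima over $(\mathcal{C},\pi)$ yields $d^{(1)}\le d^{(2)}$.

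For $d^{(2)}\le d^{(1)}$, I would start from a competitor $(Z,d_Z,\psi,\psi',\mathcal{C})$ with $(\rho,\rho')\in\mathcal{C}$. The distortion is controlled immediately: $|r(x,y)-r'(x',y')|\le d_Z(\psi(x),\psi'(x'))+d_Z(\psi(y),\psi'(y'))$ for $(x,x'),(y,y')\in\mathcal{C}$ gives $\tfrac12\mathrm{dis}(\mathcal{C})\le\sup_{(z,z')\in\mathcal{C}}d_Z(\psi(z),\psi'(z'))$, and $\sup_{\mathcal{C}}d_E(\phi,\phi')$ is already a term of $d^{(1)}$. To produce $\pi$, given $\epsilon>0$ I would apply Strassen's theorem in $(Z,d_Z)$ to obtain a coupling $\gamma$ of $\nu\circ\psi^{-1},\nu'\circ\psi'^{-1}$ charging $\{d_Z>\rho+\epsilon\}$ by at most $\rho+\epsilon$, where $\rho:=d_Z^P(\nu\circ\psi^{-1},\nu'\circ\psi'^{-1})$; since $\psi,\psi'$ are injective I can pull the restriction of $\gamma$ to $\psi(T)\times\psi'(T')$ back by $(\psi^{-1},(\psi')^{-1})$ and redistribute the exceptional mass onto $(\rho,\rho')$, getting a probability measure $\pi$ on $T\times T'$ with $D(\pi;\nu,\nu')$ bounded by the exceptional mass and with $\pi$ charging $\{(z,z'):d_Z(\psi(z),\psi'(z'))\le\rho+\epsilon\}$ fully. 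Then I would enlarge $\mathcal{C}$ to a correspondence $\widehat{\mathcal{C}}\supseteq\mathcal{C}$ containing that set, so that $\pi(\widehat{\mathcal{C}}^c)$ is small, whose distortion is still controlled by $\sup_{\mathcal{C}}d_Z(\psi,\psi')$ and $\rho+\epsilon$, and control the enlarged mark term $\sup_{\widehat{\mathcal{C}}}d_E(\phi,\phi')$ using the uniform continuity of $\phi,\phi'$ on the compact spaces $(T,r),(T',r')$; letting $\epsilon\downarrow0$ and taking infima gives $d^{(2)}\le d^{(1)}$.

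I expect this last reconciliation to be the main obstacle. In the embedding form the correspondence $\mathcal{C}$ only has to record which points are close in $(Z,d_Z)$ (and which marks are close in $E$), whereas in the correspondence form the \emph{same} $\mathcal{C}$ must simultaneously bound the metric distortion, absorb almost all of the mass of a near-optimal coupling, and control the mark distances; one can afford to enlarge $\mathcal{C}$ so as to make $\pi(\mathcal{C}^c)$ small only if the additional pairs do not inflate $\sup_{\mathcal{C}}d_E(\phi,\phi')$, which is exactly where compactness of $(T,r)$ — hence uniform continuity of the mark functions, i.e.\ the restriction to $\mathbb{T}_{\text{sp}}^c$ — is used, and this is the part for which I would rely on the argument of \cite{berry2017minimal}. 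Everything else is routine once Strassen's theorem and the explicit glued metric above are in place.
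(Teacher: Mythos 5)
The paper does not actually prove this lemma; it cites \cite{berry2017minimal}, so there is no in-paper argument to compare against, and your proposal must stand on its own. As written it does not close, and the gap is already visible in the direction $d^{(1)}\le d^{(2)}$ that you describe as routine. Your sup-term bound is $\sup_{\mathcal{C}}\bigl(d_Z(\psi,\psi')+d_E(\phi,\phi')\bigr)\le\tfrac12\mathrm{dis}(\mathcal{C})+\sup_{\mathcal{C}}d_E(\phi,\phi')$, and your Prokhorov bound is $d_Z^P(\nu\circ\psi^{-1},\nu'\circ\psi'^{-1})\le\tfrac12\mathrm{dis}(\mathcal{C})+D(\pi;\nu,\nu')+\pi(\mathcal{C}^c)$; adding them gives an upper bound of $\mathrm{dis}(\mathcal{C})+D(\pi;\nu,\nu')+\pi(\mathcal{C}^c)+\sup_{\mathcal{C}}d_E(\phi,\phi')$, which exceeds the quantity being infimized in $d^{(2)}$ by $\tfrac12\mathrm{dis}(\mathcal{C})$. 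Taking infima therefore does \emph{not} yield $d^{(1)}\le d^{(2)}$. The problem is structural: in your construction the $\tfrac12\mathrm{dis}(\mathcal{C})$ offset of the glued metric is paid twice, once in the $\sup_\mathcal{C}d_Z$ term and once in the Prokhorov term, whereas the right-hand side of the lemma charges it only once. You would need a genuinely different argument (or a sharper use of Strassen, noting that $\gamma(\{d_Z>\tfrac12\mathrm{dis}(\mathcal{C})\})\le\pi(\mathcal{C}^c)$ gives $d_Z^P\le\max(\tfrac12\mathrm{dis}(\mathcal{C}),\pi(\mathcal{C}^c))$ rather than the sum, which still does not eliminate the overcount) to recover the exact constant; as written, you get only $d^{(1)}\le d^{(2)}+\tfrac12\mathrm{dis}(\mathcal{C})$.

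The converse direction $d^{(2)}\le d^{(1)}$ is the one you flag as the main obstacle, and indeed it is not resolved. The issue is not merely the mark term: even with $\phi\equiv\phi'\equiv 0$ your enlargement step produces $\widehat{\mathcal{C}}=\mathcal{C}\cup\{(x,x'):d_Z(\psi(x),\psi'(x'))\le b+\epsilon\}$ with $\tfrac12\mathrm{dis}(\widehat{\mathcal{C}})\le\max(a,b+\epsilon)$ and $\pi(\widehat{\mathcal{C}}^c)\le b+\epsilon$, where $a:=\sup_{\mathcal{C}}d_Z(\psi,\psi')$ and $b:=d_Z^P$; this bounds the $d^{(2)}$ infimand by $\max(a,b+\epsilon)+b+\epsilon$, which can exceed $a+b$ when $b>a$. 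On top of that, the mark term $\sup_{\widehat{\mathcal{C}}}d_E(\phi,\phi')$ over the enlarged correspondence cannot be controlled by the uniform continuity of $\phi$ and $\phi'$ alone: for a newly added pair $(x,x')$ with $d_Z(\psi(x),\psi'(x'))\le b+\epsilon$, picking $(y,x')\in\mathcal{C}$ only gives $r(x,y)\le a+b+\epsilon$, and $a$ need not be small, so $d_E(\phi(x),\phi(y))$ is not controlled by a modulus of continuity applied to a small argument. So the deferral to \cite{berry2017minimal} at precisely this point means your proposal does not constitute a proof of the stated identity, and the argument you do supply for the other inequality is also off by a factor in the distortion term.
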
 

Given a metric space $(Z,d_Z)$ and isometric embeddings $\psi: (T,r)\to (Z,d_Z)$, $\psi': (T',r')\to (Z,d_Z)$, the standard Prokhorov distance between $\nu\circ \psi^{-1}$ and $\nu'\circ \psi'^{-1}$ on the common metric space $(Z,d_Z)$ appeared in the definition of $d_{\mathbb{T}_{\textnormal{sp}}^c}$. Another distance, which fits to the setting where $\nu$ and $\nu'$ are not supported in the same metric space, but still generates the same topology, is given by
\begin{align*}
\inf\{\varepsilon>0: &D(\pi;\nu\circ \psi^{-1},\nu'\circ \psi'^{-1})<\varepsilon,
\\
&\pi(\{(z,z'): d_Z(z,z')\ge \varepsilon\})<\varepsilon, \text{ for some Borel probability measure } \pi \text{ on } Z\}
\end{align*}
To extend this, the condition $\pi(\{(z,z'):d_Z(z,z')\ge \varepsilon\})<\varepsilon$ is replaced by $\pi(\mathcal{C}^c)<\varepsilon$, an analogous condition on the set of pairs lying outside of the correspondence $\mathcal{C}$, measured by $\pi$.

\begin{remark}
In Lemma \ref{christberry}, if the infimum is taken over all correspondences between $T$ and $T'$, and couplings $\pi$ on $T\times T'$, observe that the formulation of $d_{\mathbb{T}_{\text{sp}}^c}$ is simplified not to include $D(\pi;\nu,\nu')$.
\end{remark}

 To extend $d_{\mathbb{T}_{\text{sp}}^c}$ to a metric to $\mathbb{T}_{\text{sp}}$ consider restrictions of $(\mathcal{T},\phi)=((T,r,\nu,\rho),\phi)\in \mathbb{T}_{\text{sp}}$ to $\bar{B}(\rho,R):=\{u\in T: r(\rho,u)\le R\}$, the closed of radius $R$ centred at the root $\rho$, denoted by 
\[
(\mathcal{T},\phi)|_R=\left((\bar{B}(\rho,R),r|_{\bar{B}(\rho,R)\times \bar{B}(\rho,R)},\nu(\cdot \cap \bar{B}(\rho,R)),\rho),\phi|_{\bar{B}(\rho,R)}\right).
\]
By assumption $\left(\bar{B}(\rho,R),r|_{\bar{B}(\rho,R)\times \bar{B}(\rho,R)}\right)$ is compact, and therefore $(\mathcal{T},\phi)|_R\in \mathbb{T}_{\text{sp}}^c$. The function defined on $\mathbb{T}_{\text{sp}}^2$ by setting
\[
d_{\mathbb{T}_{\text{sp}}}\left((\mathcal{T},\phi),(\mathcal{T}',\phi')\right):=\int_{0}^{+\infty} e^{-R} \left(d_{\mathbb{T}_{\text{sp}}^c}\left((\mathcal{T},\phi)|_R,(\mathcal{T}',\phi')|_R\right)\wedge 1\right) d R
\]
is well-defined since the map $R\mapsto (\mathcal{T},\phi)|_R$ is c\`adl\`ag (right-continuous with left limits), and moreover it can be checked that it is a metric on $\mathbb{T}_{\text{sp}}$. For each $n\in \mathbb{N}\cup \{\infty\}$ let $(\mathcal{T}_n,\phi_n):=((T_n,r_n,\nu_n,\rho_n),\phi_n)\in \mathbb{T}_{\text{sp}}$. We say that $(\mathcal{T}_n,\phi_n)\to (\mathcal{T}_{\infty},\phi_{\infty})$ in the spatial Gromov-Hausdorff-vague topology if and only if, for Lebesgue-almost-every $R\ge 0$,
\[
d_{\mathbb{T}_{\textnormal{sp}}^c}((\mathcal{T}_n,\phi_n)|_R,(\mathcal{T}_{\infty},\phi_{\infty})|_R)\to 0.
\]

\section{Random walk in random environment on plane trees} \label{rwre}

We will work with ordered trees (or plane trees), i.e. those with a distinguished vertex (the root) and such that the children of a vertex (its neighbors which are further away from the root) have a specified ordering, say left-to-right order by increasing label. An ordered tree for which every vertex has a finite number of children is called locally finite.

Let $T$ be a locally finite ordered tree with a distinguished vertex $\rho$. Let each vertex $u$ have $\xi(u)$ children. Note that $\xi(u)<\infty$, for every $u\in T$, since $T$ was assumed to be locally finite. 
For each $u\in T$, we denote its children by $u_1,...,u_{\xi(u)}$ and its parent by either $u_0$ or $\overleftarrow{u}$. For each $u\in T$, let 
\[
N_u:=\left\{(\om_{u u_i})_{i=0}^{\xi(u)}: \om_{u u_i}>0 \ \ \forall \ 0\le i\le \xi(u) \text{ and } \sum_{i=0}^{\xi(u)} \om_{u u_i}=1\right\},
\]
where $\om_{u u_i}:T\to (0,1)$ is a measurable function indexed by the directed edge connecting $u$ to its neighbor $u_i$. Formally, $N_u$ is the set of transition  probability laws on oriented edges starting at $u$. We equip $N_u$ with the weak topology on probability measures, which turns it into a Polish space.
Let $\Omega:=\prod_{u\in T} N_u$ equipped with the product topology that carries the Polish structure of $N_u$, and the corresponding Borel $\sigma$-algebra $\mathcal{F}$, which is the same as the $\sigma$-algebra generated by cylinder functions. For a probability measure $P$ on $(\Omega,\mathcal{F})$, a random environment $\om$ is an element of $\Omega$ that has law $P$.

For each $\om\in \Omega$, the random walk in the random environment (RWRE) $\om$ is the time-homogeneous Markov chain $X=((X_n)_{n\ge 0},\mathbf{P}_{\om}^u,u\in T)$ taking values on $T$ with transition probabilities, for each $u\in T$,  given by 
\begin{equation} \label{digress}
(\mathbf{P}_{\om}(X_{n+1}=u_i|X_n=u))_{i=0}^{\xi(u)}=(\om_{u u_i})_{i=0}^{\xi(u)}.
\end{equation}
Using the same terminology from the literature of RWRE, for $u\in T$, we refer to $\mathbf{P}_{\om}^u$ as the quenched law of $X$ started from $u$. Then, the fraction 
$
\rho_{\overleftarrow{u} u}:=\om_{\overleftarrow{u} \overleftarrow{\overleftarrow{u}}}/\om_{\overleftarrow{u} u}
$
is well-defined for every node of $T$ except the root and any of its children. Suppose that the marginals of $\om$ are defined as the transition probabilities of a weighted random walk on $T$ with positive conductances assigned on its (undirected) edge set $E(T)$. More specifically, for each $u\in T$, let
\[
(\om_{u u_i})_{i=0}^{\xi(u)}=\left(\frac{c(\{u,u_i\})}{c(\{u\})}: 0\le i\le \xi(u)\right),
\]
where $c(\{u\}):=\sum_{e\in E(T):u\in e} c(e)$. In this case, $\rho_{\overleftarrow{u} u}=c(\{\overleftarrow{u},\overleftarrow{\overleftarrow{u}}\})/c(\{\overleftarrow{u},u\})$. 

To define the potential $V_T$ of the RWRE on $T$, we demand its increment between $u$ and $\overleftarrow{u}$ to be given by $\log \rho_{\overleftarrow{u} u}$, or in other words
\[
V_T(u)-V_T(\rho):=
\sum_{v\in [\rho,u]]} \log \rho_{\overleftarrow{v} v},
\]
which is well-defined, up to a constant, for every node of $T$ except the root and its children. It will be convenient to work with a slight modification of the trees under consideration. We add a nex vertex which we call the base and stick it to the root by a new edge with unit conductance, i.e. $c(\{\overleftarrow{\rho},\rho\}):=1$. This yields a planted tree $\bar{T}$. To keep our notation simple, even if the statements are expressed in terms of the planted tree $\bar{T}$, we still phrase them in terms of $T$. Setting $V_T(\rho):=0$ extends the definition of the potential to the whole vertex set of $T$. Now, observing that the potential is given pointwise at $u\in T\setminus \{\rho\}$ by the telescopic sum
\[
V_T(u)=\sum_{v\in [\rho,u]]} \log \rho_{\overleftarrow{v} v}=\sum_{v\in [\rho,u]]} \left[\log c(\{\overleftarrow{v},\overleftarrow{\overleftarrow{v}}\})-\log c(\{\overleftarrow{v},v\})\right]=\log c(\{\overleftarrow{u},u\})^{-1},
\]
we deduce that the exponential of the potential at $u$ is equal to the reciprocal $r(\{\overleftarrow{u},u\}):=c(\{\overleftarrow{u},u\})^{-1}$, which is called the resistance of the edge $\{\overleftarrow{u},u\}$. Therefore, we can now define the potential as
\begin{equation} \label{resist1}
V_T(u)=
\begin{cases}
\log r(\{\overleftarrow{u},u\}), & u\in T\setminus \{\rho\},
\\
0, & u=\rho.
\end{cases}
\end{equation}

The pair $(T,r(\{e\})$ consisting of the finite undirected graph $T$, endowed with positive resistances $r(\{e\})$, that are associated to $E(T)$, is often called an electrical network. 
One of the crucial facts for the RWRE on tree-like spaces is that, for fixed $\om$, the random walk is a reversible Markov chain, and thus it was of no loss of generality to assume that the marginals of $\om$ are defined as transition probabilities of a weighted random walk on $T$ (see \cite[Section 9.1]{levin2017second}). The RWRE on $T$, for fixed $\om$, can be described as an electrical network with resistances given by $r(\{\overleftarrow{u},u\})=e^{V_T(u)}$, and weighted shortest path metric 
\begin{equation} \label{resist12}
r(u_1,u_2):=\sum_{u\in [u_1,u_2]]} r(\{\overleftarrow{u},u\})=\sum_{u\in [u_1,u_2]]} e^{V_T(u)}, \qquad u_1, u_2\in T,
\end{equation}
with the convention of a sum taken over the empty set being equal to zero. Moreover, one has that $r$ is identical to the electrical resistance defined by means of the variational problem
\begin{equation} \label{variation1}
r(u_1,u_2)^{-1}=\inf\{\mathcal{E}(f,f):f: T\to \mathbb{R}, f(u_1)=0, f(u_2)=1\}, \qquad u_1, u_2\in T,
\end{equation}
involving the Dirichlet from on $T$ given by 
\[
\mathcal{E}(f,g)=\frac{1}{2} \sum_{x\sim y} c(\{x,y\}) (f(x)-f(y)) (g(x)-g(y)),
\]
where $c(\{x,y\})$ is the conductance between $x$ and $y$ in the electrical network described above as assigning parent-children edge weights by $c(\{\overleftarrow{u},u\})=e^{-V_T(u)}$ (see \cite{kigami1995harmonic}).
The stationary measure of the RWRE on $T$, for fixed $\om$, is given by 
\begin{equation} \label{resist2}
\nu(\{u\}):=e^{-V_T(u)}+\sum_{i=1}^{\xi(u)} e^{-V_T(u_i)}, \qquad u\in T.
\end{equation}
The reversibility means that, for all $u\in T$ and $0\le i\le \xi(u)$, we have
\begin{align*}
\nu(\{u\}) \om_{u u_i}=c(\{u\}) \frac{c(\{u,u_i\})}{c(\{u\})}&=c(\{u,u_i\})
\\
&=c(\{u_i,u\})=c(\{u_i\}) \frac{c(\{u_i,u\})}{c(\{u_i\})}=\nu(\{u_i\}) \om_{u_i u}.
\end{align*}
It is worth mentioning that this discussion is precice under the quenched law, as a random walk in independent random environment is not represented as a random walk among random conductances under the annealed law.

\section{Set-up and main assumption} \label{setup}

If $(T,r)$ is a metric tree, we denote by $\mathcal{C}(T)$ the space of continuous functions $f:T\to \mathbb{R}$ and by $\mathcal{C}_{\infty}$ the subspace of functions that are vanishing at infinity. A continuous function is called locally absolutely continuous if for every $\varepsilon>0$ and all subsets $T'\subseteq T$ with $\lambda(T')<\infty$ (see Definition \ref{lengthmeasure1}), there exists a $\delta\equiv \delta(T',\varepsilon)$,
such that if $[[u_i,v_i]]_{i=1}^{n}\subseteq T$ is a disjoint collection with $\sum_{i=1}^{n} r(u_i,v_i)<\delta$, then $\sum_{i=1}^{n} |f(u_i)-f(v_i)|<\varepsilon$. Denote the subspace of locally absolutely continuous functions by $\mathcal{A}$.
Notice that in the case when $(T,r)$ is a discrete metric tree $\mathcal{A}$ is equal to the space of continuous functions.

Consider the Dirichlet form
\begin{equation} \label{dir1}
\mathcal{E}(f,g):=\frac{1}{2} \int \text{d} \lambda \nabla f \cdot \nabla g 
\end{equation}
and its domain 
\begin{equation} \label{dir2}
\mathcal{D}(\mathcal{E}):=\{f\in L^2(\nu)\cap \mathcal{C}_{\infty}\cap \mathcal{A}: \nabla f\in L^2(\lambda)\},
\end{equation}
where the gradient $\nabla f$, of $f\in \mathcal{A}$ is the function, which is unique up to $\lambda$-null sets, that satisfies
\begin{equation} \label{grad1}
\int_{u_1}^{u_2} \nabla f(u) \lambda(\textnormal{d} u)=f(u_2)-f(u_1), \qquad \forall u_1, u_2\in T.
\end{equation}
For its existence and uniqueness, see \cite[Proposition 1.1]{athreya2013brownian}. The gradient, $\nabla f$, of $f\in \mathcal{A}$ depends on the choice of the root $\rho$, although, the Dirichlet form in \eqref{dir1} is independent of that choice (see \cite[Remark 1.3]{athreya2013brownian}).

\begin{definition}[$\nu$-symmetric Markov process]
We call a Markov process $X$ on $(T,\mathcal{B}(T))$ $\nu$-symmetric if the transition function $\{T_t\}_{t>0}$ of $X$ is $\nu$-symmetric on $(T,\mathcal{B}(T))$ in the following sense:
\[
\int_{T} f(u) (T_t g)(u) \nu(\textnormal{d} u)=\int_{T} (T_t f)(u) g(u) \nu(\textnormal{d} u)
\]
for any non-negative measurable functions $f$ and $g$.
\end{definition}

\begin{theorem} [$\nu$-speed motion \cite{athreya2013brownian}, \cite{athreya2017invariance}] \label{uniqueness}
There exists a unique $\nu$-symmetric strong Markov process $((X_t)_{t\ge 0},\mathbf{P}^u,u\in T)$
associated with the regular Dirichlet form $(\mathcal{E},\bar{\mathcal{D}}(\mathcal{E}))$ on the metric measure tree $(T,r,\nu)$, which is called the $\nu$-speed motion on $(T,r)$.

\end{theorem}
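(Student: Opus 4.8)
The plan is to realize $(\mathcal{E},\mathcal{D}(\mathcal{E}))$ from \eqref{dir1}--\eqref{dir2} as a closable Markovian form on $L^2(T,\nu)$ whose closure $(\mathcal{E},\bar{\mathcal{D}}(\mathcal{E}))$ is a regular Dirichlet form, to recognise it as (the regularisation of) a resistance form in Kigami's sense via \eqref{variation1}, and then to invoke the Fukushima--Oshima--Takeda correspondence between regular Dirichlet forms and Hunt processes. Since $(T,r)$ is Heine--Borel it is locally compact, separable and complete, and $\nu$ is a Radon measure of full support, so the ambient hypotheses of that theory are met; it remains to verify closability, the Markov property, and regularity, and then to extract the strong locality and the non-polarity of points that pin down the process uniquely and allow it to be started from every $u\in T$.

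For closability, suppose $(f_j)\subseteq\mathcal{D}(\mathcal{E})$ with $f_j\to 0$ in $L^2(\nu)$ and $\mathcal{E}(f_j-f_k,f_j-f_k)\to 0$. Then $\nabla f_j$ is Cauchy in $L^2(\lambda)$, say $\nabla f_j\to h$, and after passing to a subsequence $f_j\to 0$ pointwise $\nu$-a.e. Because $\nu$ has full support there is a dense set of pairs $u_1,u_2\in T$ along which $f_j(u_2)-f_j(u_1)\to 0$; each path $[[u_1,u_2]]$ has $\lambda([[u_1,u_2]])\le r(\rho,u_1)+r(\rho,u_2)<\infty$, so by \eqref{grad1} and Cauchy--Schwarz this forces $\int_{u_1}^{u_2}h\,\mathrm{d}\lambda=0$ for those pairs, hence for all pairs by continuity of the path integral, and therefore $h=0$ $\lambda$-a.e.\ on $\mathrm{Sk}(T)$. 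Thus $\mathcal{E}(f_j,f_j)\to 0$ and the form is closable; take $\bar{\mathcal{D}}(\mathcal{E})$ to be the closure of $\mathcal{D}(\mathcal{E})$ in the norm $\mathcal{E}_1(\cdot,\cdot):=\mathcal{E}(\cdot,\cdot)+\langle\cdot,\cdot\rangle_{L^2(\nu)}$. The Markov property is immediate: any normal contraction $\bar f=(0\vee f)\wedge 1$ of $f\in\bar{\mathcal{D}}(\mathcal{E})$ again lies in $\mathcal{A}$ with $|\nabla\bar f|\le|\nabla f|$ $\lambda$-a.e., so $\bar f\in\bar{\mathcal{D}}(\mathcal{E})$ and $\mathcal{E}(\bar f,\bar f)\le\mathcal{E}(f,f)$; and $\mathcal{E}$ is strongly local because $\mathcal{E}(f,g)=0$ as soon as $f$ is constant on a neighbourhood of $\mathrm{supp}(g)$, since then $\nabla f\cdot\nabla g=0$ $\lambda$-a.e.

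The substantive point is regularity, i.e.\ producing a core $\mathcal{K}\subseteq\bar{\mathcal{D}}(\mathcal{E})\cap\mathcal{C}_c(T)$ dense in $\mathcal{C}_c(T)$ for $\|\cdot\|_\infty$ and in $\bar{\mathcal{D}}(\mathcal{E})$ for $\mathcal{E}_1^{1/2}$. The natural candidates are the functions that are linear, in the arc-length parametrisation, on each edge of some compact finite subtree $T_0$ spanned by finitely many points, that vanish at the points of $T_0$ beyond which $T$ continues, and that vanish off $T_0$: for such $f$ the gradient is bounded and supported on $T_0$, which has finite length, so $\nabla f\in L^2(\lambda)$ and $f\in\mathcal{D}(\mathcal{E})\cap\mathcal{C}_c(T)$. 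The delicate part is density: one exhausts $T$ by finite subtrees using the four-point condition and the Heine--Borel property, approximates an arbitrary $g\in\mathcal{C}_c(T)$ uniformly by such piecewise-linear functions (a tree version of Stone--Weierstrass), and approximates an arbitrary $f\in\mathcal{D}(\mathcal{E})$ in $\mathcal{E}_1$ by truncating its support to a large finite subtree and replacing $f$ there by the $\mathcal{E}$-harmonic interpolant of its values at the spanning vertices, using \eqref{variation1} to control the energy of the defect. Equivalently, \eqref{resist12}--\eqref{variation1} exhibit $r$ as precisely the resistance metric of $(\mathcal{E},\cdot)$, so the general fact that a resistance form on a locally compact separable metric space, paired with a Radon measure of full support, is a regular Dirichlet form applies verbatim; this is the content of the cited references. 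I expect this regularity step to be the main obstacle, because $\lambda$ need not be finite on bounded sets (the length measure of the CRT is infinite), so even Lipschitz compactly supported functions need not lie in $\mathcal{D}(\mathcal{E})$ and the core must be chosen with care.

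Granting regularity, the Fukushima--Oshima--Takeda theorem furnishes a $\nu$-symmetric Hunt process $((X_t)_{t\ge0},\mathbf{P}^u)$ properly associated with $(\mathcal{E},\bar{\mathcal{D}}(\mathcal{E}))$, a priori for quasi-every starting point; strong locality makes its sample paths continuous, so $X$ is a diffusion. Finally, the resistance-form structure gives $|f(x)-f(y)|^2\le r(x,y)\,\mathcal{E}(f,f)$ for $f\in\bar{\mathcal{D}}(\mathcal{E})$, whence every one-point set has strictly positive capacity, i.e.\ is non-polar; consequently $X$ can be started from every $u\in T$ and there is no exceptional set. Uniqueness is then genuine: any two such processes share the same $\mathcal{E}$-resolvent, hence, using that resolvents of $L^2$ functions have continuous $\mathcal{E}_1$-representatives by the pointwise bound above, the same transition semigroup pointwise, hence the same finite-dimensional distributions from every $u\in T$. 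This $X$ is the $\nu$-speed motion on $(T,r)$.
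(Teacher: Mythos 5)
The paper does not prove this theorem; it is stated as a citation to \cite{athreya2013brownian} and \cite{athreya2017invariance}, so there is no in-paper proof to compare against. Your sketch is nevertheless a reasonable outline of the Dirichlet-form argument those references carry out: closability of the pre-form \eqref{dir1}--\eqref{dir2} via the gradient representation, Markovianity via normal contractions, regularity, and then Fukushima--Oshima--Takeda together with non-polarity of singletons, the latter coming from the resistance inequality $|f(x)-f(y)|^2\le r(x,y)\,\mathcal{E}(f,f)$, which is also what removes the exceptional set and lets the process start from every $u\in T$. You correctly identify regularity as the genuine obstacle: $\lambda$ is only $\sigma$-finite and fails to be Radon on spaces like the CRT, whose skeleton has infinite total length, so compactly supported Lipschitz functions need not lie in $\mathcal{D}(\mathcal{E})$ and the core must be built from functions whose gradients are carried on finite-length subtrees; the route through Kigami's resistance-form machinery, which the cited papers in fact follow, is the clean way to produce such a core and establish its density. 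One small point worth tightening in your closability step: from $\int_{u_1}^{u_2}h\,d\lambda=0$ on a $\nu$-full (hence dense) set of pairs, the cleanest way to get $h=0$ $\lambda$-a.e.\ is to observe that the primitive $F(u):=\int_\rho^u h\,d\lambda$ is $\tfrac12$-H\"older on $(T,r)$ by Cauchy--Schwarz and finiteness of $\lambda$ on path intervals, hence continuous; since $F(\rho)=0$ and $F$ vanishes on a dense set, $F\equiv 0$, whence $h=0$ $\lambda$-a.e.\ by uniqueness of the gradient. This avoids appealing to an unjustified ``continuity of the path integral'' in two variables.
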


If $(T,r)$ is a compact real tree, then the $\nu$-speed motion on $(T,r)$ coincides with the $\nu$-Brownian motion on $T$ \cite{athreya2013brownian}, i.e. a $\nu$-symmetric strong Markov process with the following properties.

\begin{enumerate} [label=(\roman*)]
\item Continuous sample paths.

\item Reversible with respect to the invariant measure $\nu$.

\item For every $u_1,u_2\in T$ with $u_1\neq u_2$,
\[
\mathbf{P}^{u_3}(\tau_{u_1}<\tau_{u_2})=\frac{r(u(u_1,u_2,u_3),u_2)}{r(u_1,u_2)}, \qquad u_3\in T,
\]
where $\tau_u:=\inf\{t>0: X_t=u\}$ is the hitting time of $u\in T$, and $u(u_1,u_2,u_3)$ is the unique branch point of $u_1$, $u_2$ and $u_3$ in $T$.

\item For $u_1, u_2\in T$, the mean occupation measure for the process started at $u_1$ and killed upon hitting $u_2$ has density $2 r(u(u_1,u_2,u_3),u_2)\text{d} \nu(u_3)$ with respect to $\nu$, so that 
\[
\mathbf{E}^{u_1}\left(\int_{0}^{\tau_{u_2}} g(X_s) ds\right)=2 \int_{T} g(u_3) r(u(u_1,u_2,u_3),u_2)\text{d} \nu(u_3),
\]
for every $g\in \mathcal{C}(T)$.

\end{enumerate}

If $(T,r)$ is a discrete metric measure tree, then the $\nu$-speed motion on $(T,r)$ is the continuous-time nearest neighbor random walk on $(T,r)$ with the following jump rates (for a reference for these jump rates, see \cite[Lemma 2.11]{athreya2017invariance}).
\[
q(x,y)^{-1}:=2\cdot \nu(\{x\})\cdot r(x,y), \qquad x\sim y.
\]
Equivalently, the $\nu$-speed motion on $(T,r)$ is the continuous-time nearest neighbor random walk on $(T,r)$ with associated Dirichlet form $(\mathcal{E},\bar{\mathcal{D}}(\mathcal{E}))$ with 
\begin{equation} \label{generator}
\mathcal{E}(f,g)=(-L f,g)_{\nu},
\end{equation}
where 
\[
L f=\frac{1}{2 \nu(\{x\})} \sum_{y\sim x} \frac{1}{r(x,y)} (f(y)-f(x))
\]
is the generator of the process, acting on continuous functions $f\in \mathcal{C}(T)$ that depend only on finitely many points of $T$. 

Let $(T,r,\nu)$ be a compact real metric tree. To formalize the notion of the potential of diffusions on $(T,r)$, which are not necessarily on natural scale with respect to $r$, assume that we are further given a measure $\mu$ which is absolutely continuous with respect to the length measure $\lambda$ and its density is given by  
\begin{equation} \label{potent0}
\frac{\textnormal{d} \mu}{\textnormal{d} \lambda}(x)=e^{\phi(x)},
\end{equation}
where $\phi: T\to \mathbb{R}$ is a continuous function. For every $u_1,u_2\in T$, let $r_{\phi}: T\times T\to \mathbb{R}_{+}$ defined by
\begin{equation} \label{potent}
r_{\phi}(u_1,u_2):=\int_{[[u_1,u_2]]} e^{\phi(u)} \lambda(\textnormal{d} u).
\end{equation}
To justify the term potential on $T$ given to $\phi$, cf. \eqref{resist12}. It is easy to check that $r_{\phi}$ defines a metric on $T$. In addition, $r$ and $r_{\phi}$ are topologically equivalent and the metric space $(T,r_{\phi})$ is also a compact real tree. Moreover, $(\mathcal{E},\bar{\mathcal{D}}(\mathcal{E}))$  (see \eqref{dir1} and \eqref{dir2} with the difference that in \eqref{dir1} we integrate with respect to $\mu$ instead of $\lambda$) is a regular Dirichlet form.
In this case we refer to the corresponding diffusion as the $(\nu,\mu)$-Brownian motion.
The $\nu$-Brownian motion on $(T,r_{\phi})$ is equal in law with $(\nu,\mu)$-Brownian motion on $(T,r)$ \cite[Example 8.3]{athreya2013brownian}. In fact, for the previous statement to hold, $\phi$ needs not to be assumed continuous insofar as it has enough regularity for the integral in \eqref{potent} to make sense and $(T,r_{\phi})$ to be a locally compact real tree.

Now, we are ready to state our main assumption that corresponds to a metric measure version of Sina\u{\i}'s
model, that is when the potential converges to a Brownian motion. The natural tree-distance and the counting measure on the tree are replaced by the distorted resistance metric and the invariant measure of the RWRE on the tree, which are explicitly associated with the potential on the tree.

\begin{assum} \label{sinai}
For a sequence $(\mathcal{T}_n,V_n)_{n\ge 1}\in \mathbb{T}_{\textnormal{sp}}$, where $\mathcal{T}_n:=(T_n,r_n,\nu_n,\rho_n)$, $n\ge 1$ is a (locally finite) rooted plane metric measure tree with metric $r_n$ as in \eqref{resist12}, boundedly finite measure $\nu_n$ as in \eqref{resist2}, and $V_n: T_n\to \mathbb{R}$ is the potential of the RWRE as defined in Section \ref{rwre}, we suppose that 
\begin{equation} \label{non}
(T_n,V_n)\xrightarrow{(d)} (\mathcal{T},\phi)
\end{equation}
in the spatial Gromov-Hausdorff-vague topology, where $\mathcal{T}:=(T,r_{\phi},\nu_{\phi},\rho)$ is a rooted real measure tree with metric $r_{\phi}$ as in \eqref{potent}, boundedly finite measure $\nu_{\phi}$, and 
$\phi: T\to \mathbb{R}$ is a continuous potential on $T$ as defined in \eqref{potent0}. Moreover, suppose that the following uniform recurrence condition of the resistance from $\rho_n$ to the complement of the open ball $B_n(\rho_n,R)$ 
is satisfied
\begin{equation} \label{non1}
\lim_{R\to \infty} \liminf_{n\to \infty} r_n(\rho_n,B_n(\rho_n,R)^c)=\infty.
\end{equation}
If $(\mathcal{T}_n,V_n)_{n\ge 1}\in \mathbb{T}_{\textnormal{sp}}$ are random elements built on a probability space $\mathbf{P}$, suppose instead of \eqref{non1} that, for Lebesgue a.e. $R\ge 0$,
\begin{equation} \label{non2}
\lim_{R\to \infty} \liminf_{n\to \infty} \mathbf{P}\left(r_n(\rho_n,B_n(\rho_n,R)^c)\ge \lambda\right)=1, \qquad \forall \lambda\ge 0.
\end{equation}

\end{assum}

With their role as the scale and the speed measure, $r_n$ and $\nu_n$ will dictate the scaling of the RWRE. If Assumption \ref{sinai} holds, as a corollary of \cite[Theorem 1.2]{croydon2016scaling}, it is possible to isometrically embed $(T_n,r_n)$, $n\ge 1$ and $(T,r_{\phi})$ into a common metric space $(Z,d_Z)$ in such a way that the $\nu_n$-speed motion on $(T_n,r_n)$ converges weakly on $D(\mathbb{R}_{+},Z)$ to the $\nu_{\phi}$-Brownian motion on $(T,r_{\phi})$. Note that, $r_n$ is a resistance metric associated with the Dirichlet form \eqref{generator} and $r_{\phi}$ is a resistance metric associated with the Dirichlet form \eqref{dir1}, when integrating with respect to $\mu$ instead of $\lambda$. For further background on Dirichlet forms and their associated resistance metrics, the reader is referred to \cite{jun2012forms}. 

\begin{theorem} [cf. Croydon \cite{croydon2016scaling}] \label{crucial}
Let $(X_t^n)_{t\ge 0}$ be the random walk associated with a random environment $\om(n)$, $n\ge 1$. Under Assumption \ref{sinai}, there exists a common metric space $(Z,d_Z)$ onto which we can isometrically embed $(T_n,r_n)$, $n\ge 1$ and $(T,r_{\phi})$, such that
\[
\mathbb{P}^{\rho_n}\left(\left(X_t^n\right)_{t\ge 0}\in \cdot \right)\to \mathbb{P}^{\rho}\left(\left(X_t\right)_{t\ge 0}\in \cdot \right),
\]
weakly on $D(\mathbb{R}_{+},Z)$ (the space of c\`adl\`ag processes on $Z$, equipped with the usual Skorohod metric), where $(X_t)_{t\ge 0}$ is the $\nu_{\phi}$-Brownian motion on $(T,r_{\phi})$. Here, $\mathbb{P}^{\rho_n}$ and $\mathbb{P}^{\rho}$ represent the annealed laws of the corresponding processes, obtained by integrating the randomness of the elements of $\mathbb{T}_{\textnormal{sp}}$ with respect to $\mathbf{P}$.

\end{theorem}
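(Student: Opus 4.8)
The plan is to derive Theorem \ref{crucial} as a more-or-less direct corollary of \cite[Theorem 1.2]{croydon2016scaling}, which gives weak convergence of symmetric resistance-form processes whenever the underlying resistance metric measure spaces converge Gromov--Hausdorff-vaguely and a uniform recurrence condition holds. First I would recall precisely what \cite{croydon2016scaling} requires: a sequence of (boundedly compact) resistance metric measure spaces $(T_n,R_n,\nu_n,\rho_n)$ converging to $(T,R,\nu,\rho)$ in the pointed Gromov--Hausdorff-vague topology, together with the condition $\lim_{r\to\infty}\liminf_n R_n(\rho_n,B_n(\rho_n,r)^c)=\infty$; under these hypotheses the associated $\nu_n$-symmetric Hunt processes converge weakly in the Skorohod space $D(\mathbb{R}_+,Z)$ after a suitable common isometric embedding. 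The task is then to check that our setup meets these hypotheses, and to translate the conclusion into the language of RWRE.

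The key steps, in order: (1) Observe that by Section \ref{rwre} and the discussion around \eqref{generator}, for each fixed environment $\om(n)$ the RWRE $(X^n_t)_{t\ge 0}$ (run in continuous time with the jump rates $q(x,y)^{-1}=2\nu_n(\{x\})r_n(x,y)$) is exactly the $\nu_n$-speed motion on the discrete resistance metric measure tree $(T_n,r_n,\nu_n)$, where $r_n$ is the distorted metric \eqref{resist12} and $\nu_n$ the invariant measure \eqref{resist2}; crucially $r_n$ \emph{is} the resistance metric of the corresponding Dirichlet form, so $(T_n,r_n,\nu_n,\rho_n)$ is a genuine resistance metric measure space. (2) Note that the limiting object in Assumption \ref{sinai}, namely $(\mathcal{T},r_\phi,\nu_\phi,\rho)$, is likewise a resistance metric measure space: $r_\phi$ defined by \eqref{potent} is the resistance metric of the Dirichlet form \eqref{dir1} integrated against $\mu=e^\phi\,\mathrm d\lambda$ (this is recorded in the excerpt just below \eqref{potent}), and the $\nu_\phi$-Brownian motion on $(T,r_\phi)$ is its associated process, equivalently the $(\nu_\phi,\mu)$-Brownian motion on $(T,r)$ by \cite[Example 8.3]{athreya2013brownian}. (3) Hypothesis \eqref{non} of Assumption \ref{sinai} gives convergence of the \emph{spatial} metric measure trees $(T_n,r_n,\nu_n,\rho_n)$ (ignoring the spatial mark $V_n$, which is extra data we simply forget at this stage) in the spatial Gromov--Hausdorff-vague topology, which in particular implies Gromov--Hausdorff-vague convergence of the unmarked trees. (4) Hypothesis \eqref{non1} (respectively \eqref{non2} in the random case) is precisely Croydon's uniform recurrence condition. (5) Apply \cite[Theorem 1.2]{croydon2016scaling} to conclude weak convergence of the quenched laws along almost every realisation, then integrate over the environment randomness $\mathbf{P}$; a dominated-convergence / bounded-continuous-functional argument upgrades quenched a.s.\ convergence to convergence of the annealed laws $\mathbb{P}^{\rho_n}\to\mathbb{P}^{\rho}$ on $D(\mathbb{R}_+,Z)$. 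For the passage from the random to the deterministic statement one uses that weak convergence in a separable metric space can be tested against a countable family of bounded continuous functions and that \eqref{non2} is assumed for Lebesgue-a.e.\ $R$.

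The main obstacle, I expect, is bookkeeping rather than anything deep: reconciling the two topologies. Croydon's theorem is stated for (pointed, non-spatial) Gromov--Hausdorff-vague convergence of resistance spaces, whereas Assumption \ref{sinai} is phrased for \emph{spatial} Gromov--Hausdorff-vague convergence with the potential $V_n$ as a mark; one must verify that forgetting the mark is a continuous operation and that the embedding $(Z,d_Z)$ produced by the spatial topology can be used to host the processes, or alternatively re-run Croydon's embedding argument within the spatial framework as in \cite{andriopoulos2018convergence}. A secondary point requiring care is that Croydon's framework presumes the limiting measure $\nu_\phi$ is Radon with full support and the limiting space is boundedly compact — this needs to be part of the standing hypotheses in Assumption \ref{sinai}, which it is (``$\mathcal T$ a rooted real measure tree'', $\nu_\phi$ boundedly finite). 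The remaining work — checking that the continuous-time RWRE with the stated jump rates really is the $\nu_n$-speed motion, and that \eqref{non1}/\eqref{non2} literally match the recurrence condition — is routine given \cite[Lemma 2.11]{athreya2017invariance} and the explicit forms \eqref{resist12}, \eqref{resist2}.
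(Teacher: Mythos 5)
Your proposal matches the paper's own argument in substance: the paper also presents Theorem \ref{crucial} as a direct corollary of \cite[Theorem 1.2]{croydon2016scaling}, checking exactly the same three items you list — that $r_n$ is the resistance metric of the Dirichlet form \eqref{generator} so $(T_n,r_n,\nu_n,\rho_n)$ is a resistance metric measure space, that $r_\phi$ is the resistance metric of \eqref{dir1} integrated against $\mu = e^\phi\,\mathrm{d}\lambda$, and that the convergence \eqref{non} together with the recurrence condition \eqref{non1}/\eqref{non2} are precisely Croydon's hypotheses. The only minor difference is that your step (5) (quenched-to-annealed passage via dominated convergence) is not needed as a separate step, since \cite[Theorem 1.2]{croydon2016scaling} is itself formulated for random resistance metric measure spaces and already yields convergence of the annealed laws once distributional Gromov--Hausdorff-vague convergence and the probabilistic recurrence condition \eqref{non2} are assumed; the paper relies on this directly rather than going through an explicit integration argument.
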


\begin{remark}
When $(\mathcal{T}_n,(V_n,\psi_n))$, $n\ge 1$ and $(\mathcal{T},(\phi,\psi))$ are random elements of $\mathbb{T}_{\textnormal{sp}}$, built on a probability space with probability measure $\mathbf{P}$, where $\psi_n$ and $\psi$ are continuous embeddings of $(T_n,r_n)$, $n\ge 1$ and $(T,r_{\phi})$ respectively, into a complete and separable metric space $(E,d_E)$, Assumption \ref{sinai} (with the probabilistic uniform recurrence of \eqref{non2}) and its validity implies the annealed convergence of the embedded stochastic processes involved in Theorem \ref{crucial}. 
\[
\mathbb{P}^{\rho_n}\left(\left(\psi_n\left(X_t^n\right)\right)_{t\ge 0}\in \cdot \right)\to \mathbb{P}^{\rho}\left(\left(\psi\left(X_t\right)\right)_{t\ge 0}\in \cdot \right),
\]
weakly on $D(\mathbb{R}_{+},E)$, where $\mathbb{P}^{\rho_n}$ and $\mathbb{P}^{\rho}$ represent the annealed laws of the corresponding processes, obtained by integrating the randomness of the elements of $\mathbb{T}_{\textnormal{sp}}$ with respect to $\mathbf{P}$.
\end{remark}

\section{Examples} \label{contri}

\subsection{Convergence of Sina\u{\i}'s random walk to the Brox diffusion} \label{example1}

We introduce the one-dimensional RWRE considered early in the works of \cite{sinai1982limit} and \cite{solomon1975random} (see also \cite{golosov1984localization} and \cite{kesten1986limit}) and studied extensively subsequently by many authors (we refer to \cite{ofer2004random} for a detailed account). Given a sequence $\om=(\om_{z}^{-})_{z\in \mathbb{Z}}$ of i.i.d. random variables taking values in (0,1) and defined on a probability space $(\Omega,\mathcal{F},P)$, the one-dimensional RWRE is the Markov chain  $X=((X_n)_{n\ge 0},\mathbf{P}_{\om}^u,u\in \mathbb{Z})$ defined by $X_0=0$ and
\[
\mathbf{P}_{\om}(X_{n+1}=z-1|X_n=z)=\om_z^{-}, \qquad \mathbf{P}_{\om}(X_{n+1}=z+1|X_n=z)=\om_z^{+}:=1-\om_z^{-},
\]
for any given $\om$. Let $\rho_z:=\om_z^{-}/\om_z^{+}$, $z\in \mathbb{Z}$ and assume that 
\begin{equation} \label{gant1}
E_P(\log \rho_0)=0, \qquad \sigma^2:=\text{Var}(\log \rho_0)>0, 
\end{equation}
\begin{equation} \label{gant2}
P(\varepsilon\le \om_0^{-}\le 1-\varepsilon)=1, \text{ for some } \varepsilon\in (0,1/2).
\end{equation}
The first assumption ensures that the one-dimensional RWRE is recurrent, $P$-a.s. $\om$, while the second forces the environment to be non-deterministic. The last assumption, called uniform ellipticity, is used in the context of RWRE to ensure the absence of local traps, which can appear in the elliptic setting.

Sina\u{\i} \cite{sinai1982limit} showed that there exists a non-trivial random variable $b_1: \Omega\to \mathbb{R}$, whose law was characterized later independently by Golosov \cite{golosov1984localization} and Kesten \cite{kesten1986limit}, such that for any $\eta>0$,
\begin{equation} \label{unsure}
\mathbb{P}^u\left(\left|\frac{\sigma^2 X_{n}}{(\log n)^2}-b_1(\om)\right|>\eta\right)\to 0,
\end{equation}
as $n\to \infty$, where $\mathbb{P}^u$ is the annealed law of $X$ defined as $\mathbb{P}^u(G):=\int \mathbf{P}_{\om}^u(G) P(d \om)$, for any fixed Borel set $G\subseteq \mathbb{Z}^{\mathbb{N}}$. This result was a consequence of a localization phenomenon that occurs, trapping the random walk in some valleys of its potential.

Brox \cite{brox1986one} considered a one-dimensional diffusion process in a random Brownian environment
$W$ that formally solves the stochastic differential equation
\begin{equation} \label{broxmodel}
d X_t=d B_t-\frac{1}{2} W'(X_t) dt, \qquad X_0=0,
\end{equation}
where $(B_t)_{t\ge 0}$, $(W_1(x))_{x\ge 0}$, $(W_2(x))_{x\le 0}$ are three mutually independent standard Brownian motions such that
\begin{equation} \label{twosided1}
W(x):=
\begin{cases}
\sigma W_1(|x|), & x\ge 0,
\\
\sigma W_2(|x|), & x\le 0,
\end{cases}
\end{equation}
for some $\sigma>0$. Equation \eqref{broxmodel} cannot be defined as a stochastic differential equation (SDE), since the formal derivative $W'$ of $W$ (white noise) does not exist (almost-surely). 
Rigorously speaking we are considering a Feller-diffusion process $X_t$ on $\mathbb{R}$ with the generator of Feller's canonical form  
\[
L:=\frac{1}{e^{-2 W(x)}} \frac{d}{d x} \left(\frac{1}{e^{W(x)}} \frac{d}{d x}\right).
\]
Once one defines the conditioned process $X_t$ given an environment $W$, using the law of total probability, one defines what the process $X_t$ is.

Among those, Brox also showed that this real-valued stochastic process $X_t$ converges very slowly, when $\sigma=1$, to the same random variable $b_1$ as in \eqref{unsure}. Namely,
for every $\eta>0$, 
\begin{equation} \label{unsure1}
\mathbb{P}^u\left(\left|\alpha^{-2} X_{e^{\alpha}}-b_1(\om)\right|>\eta\right)\to 0,
\end{equation}
as $\alpha\to \infty$. 

\eqref{unsure} and \eqref{unsure1} show that the one-dimensional RWRE enjoys the same asymptotic properties as a one-dimensional diffusion process in a random Brownian environment, however this does not necessarily imply that Brox's diffusion is the continuous analogue of Sina\u{\i}'s random walk. This question was answered in the affirmative by Seignourel \cite{seignourel2000discrete} who proved the existence of a Donsker's invariance principle in a setting where one is allowed to parameterize the random environment appropriately at every step of the walk.

\begin{theorem} [Seignourel \cite{seignourel2000discrete}]
For every $m\ge 1$, consider a sequence $(\om_z^{-}(m))_{z\in \mathbb{Z}}$ of i.i.d. random variables, and for simplicity denote $\om_z^{-}(1)$ by $\om_z^{-}$. Furthermore, suppose that \eqref{gant1} and \eqref{gant2} are satisfied, while also, for every $m\ge 1$ and for every $z\in \mathbb{Z}$,
\begin{equation} \label{stepchange1}
\om_z^{+}(m):=1-\om_z^{-}(m)\overset{(d)}= \left(1+{\rho_z}^{m^{-1/2}}\right)^{-1},
\end{equation}
which in other words means that, for every $m\ge 1$ and for every $z\in \mathbb{Z}$, $\rho_z(m):=\om_z^{-}(m)/\om_z^{+}(m)\overset{(d)}= \rho_z^{m^{-1/2}}$.  If, for every $m\ge 1$, $(X_n^m)_{n\ge 0}$ denotes the random walk associated with the random environment $(\om_z^{-}(m))_{z\in \mathbb{Z}}$, then 
\[
(m^{-1} X^m_{\lfloor m^2 t\rfloor})_{t\ge 0}\xrightarrow{(d)} (X_t)_{t\ge 0}
\]
in distribution in $D([0,\infty))$, where $(X_t)_{t\ge 0}$ is the Brox diffusion.
\end{theorem}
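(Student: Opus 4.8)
The plan is to derive Seignourel's theorem as a direct application of Theorem \ref{crucial}. The key point is that the ``flattening'' rescaling \eqref{stepchange1} is precisely designed so that the potential of the $m$-th random walk converges to a two-sided Brownian motion, after which the general machinery takes over. Concretely, for each $m\ge 1$ I would view the walk $(X_n^m)_{n\ge 0}$ on $\mathbb{Z}$ as a RWRE on a plane tree in the sense of Section \ref{rwre}: here the ``tree'' is simply the integer line rooted at $0$ (more precisely two half-lines glued at the root, and then planted by the base edge as in Section \ref{rwre}), with conductances $c(\{\vec{u},u\})=e^{-V_m(u)}$. The potential $V_m$ has increments $V_m(z)-V_m(z-1)=\log\rho_z(m)$ for $z\ge 1$ and the analogous expression for $z\le -1$, and by \eqref{stepchange1} these increments are equal in distribution to $m^{-1/2}\log\rho_z$.

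The first step is therefore to verify the distributional convergence \eqref{non}. After rescaling space by $m^{-1}$, the potential $z\mapsto V_m(z)$ restricted to $\{0,1,2,\dots\}$ becomes the partial-sum process of the centred i.i.d. variables $m^{-1/2}\log\rho_z$ evaluated at $\lfloor m x\rfloor$; by Donsker's invariance principle (using only \eqref{gant1}, i.e. mean zero and finite positive variance $\sigma^2$ — note \eqref{gant2} is no longer needed here since bounded increments are not required for Donsker) this converges to $(\sigma W_1(x))_{x\ge 0}$, and similarly on the negative half-line to an independent $(\sigma W_2(|x|))_{x\le 0}$, giving exactly the two-sided Brownian motion $W$ of \eqref{twosided1}. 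I would then need to upgrade this convergence of the potential as a function to the spatial Gromov-Hausdorff-vague convergence of the marked metric measure trees $(T_m, V_m)$: the underlying real tree is in each case (a subset of) the real line, which is trivially coded, and the distorted metric \eqref{resist12} and the invariant measure \eqref{resist2} are continuous functionals of the potential path, so convergence of the potential in the Skorohod (indeed uniform on compacts, since the limit is continuous) topology after the $m^{-1}$ spatial scaling and appropriate measure normalisation transfers to convergence in $\mathbb{T}_{\mathrm{sp}}^c$ on each ball $\bar B(\rho,R)$ via Lemma \ref{christberry} — one builds a correspondence from the near-identity map between the discrete line and the continuum line and checks that the distortion, the discrepancy, and the sup of $|V_m - W|$ along the correspondence all vanish.

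The second step is to check the uniform recurrence condition \eqref{non2}. Here the resistance from the root to distance $R$ is $r_m(0, B_m(0,R)^c) \asymp \sum_{|z|\le mR} e^{V_m(z)}$ after the $m^{-1/2}$ scaling of resistances; since $V_m$ on the rescaled line converges to the two-sided Brownian motion $W$, which $P$-a.s. satisfies $\limsup_{x\to\pm\infty} W(x) = +\infty$, the running maximum of $W$ on $[-R,R]$ diverges as $R\to\infty$, and $r_m(0,B_m(0,R)^c)$ is bounded below by $e^{\max_{|x|\le R} V_m(\lfloor mx\rfloor)}$, which is tight and diverges in probability as $R\to\infty$ uniformly in $m$ large. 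This is the standard ``one-dimensional Sinai trap'' estimate and is routine, though a little care is needed to pass from the a.s. statement about $W$ to the $\liminf_m\liminf$ probabilistic statement — one uses that the rescaled potential converges in distribution together with the continuous mapping $W\mapsto \max_{[-R,R]}W$.

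With \eqref{non} and \eqref{non2} in hand, Assumption \ref{sinai} is satisfied (with $\phi = W$ the two-sided Brownian potential), and Theorem \ref{crucial} immediately gives that the $\nu_m$-speed motion on $(T_m,r_m)$, suitably embedded, converges annealed to the $\nu_W$-Brownian motion on $(\mathbb{R}, r_W)$, i.e. to the diffusion on natural scale $r_W(x) = \int_0^x e^{W(u)}\,du$ with speed measure $\nu_W(dx) = 2 e^{-W(x)}\,dx$ — and this is exactly the generator in Feller's canonical form displayed before \eqref{unsure1}, namely the Brox diffusion \eqref{broxmodel}. The last piece is bookkeeping of constants and time scales: one must check that the continuous-time $\nu_m$-speed motion on $(T_m,r_m)$, when one undoes the $m^{-1}$ space and $m^2$ time rescalings, is exactly $(m^{-1} X^m_{\lfloor m^2 t\rfloor})$ up to the usual negligible discrepancy between the discrete-time walk and its continuous-time (constant-rate) version — this follows since the jump rates $q_m(x,y)^{-1} = 2\nu_m(\{x\}) r_m(x,y)$ are, before rescaling, uniformly comparable to $1$ (here a mild ellipticity-type bound is convenient but, as the paper emphasises, not essential because one only needs the time-change to be a negligible perturbation, controlled by the convergence of $\nu_m$ and $r_m$ themselves), so that the continuous-time and discrete-time walks are within $o(1)$ of each other in the Skorohod topology after rescaling.

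The main obstacle I expect is the second part of Step 1: promoting convergence of the potential \emph{path} to spatial Gromov-Hausdorff-vague convergence of the triples $(T_m, r_m, \nu_m, V_m)$, i.e. correctly setting up the correspondence and verifying that distortion, measure discrepancy, and the spatial mark discrepancy simultaneously vanish on every ball $\bar B(\rho,R)$, together with the associated (minor but fiddly) issue of handling the planting/base-edge modification and the difference between the discrete line and its continuum scaling limit near the root. The recurrence estimate and the time-change bookkeeping are, by contrast, standard once the framework of Theorem \ref{crucial} is invoked.
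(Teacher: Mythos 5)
Your plan matches the paper's: verify Assumption~\ref{sinai} (GHV convergence of the distorted metric measure spaces plus uniform recurrence), invoke Theorem~\ref{crucial}, and identify the limiting $\nu$-speed motion with the Brox diffusion. The Donsker step at the start is a correct and necessary addition, since the paper proves the result under Assumption~\ref{sinai1} and then deduces the Seignourel case from \eqref{gant1} as a special case. The GHV-convergence argument (Skorohod representation, correspondence $(i,s)\in R_m$ iff $i=\lfloor ms\rfloor$, bound the distortion, use the induced coupling for the measures) is exactly what Theorem~\ref{space1} does.

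Where you diverge from the paper, and where your argument is not quite right as written, is the recurrence step. The ball $B_m(\rho_m,R)$ appearing in \eqref{non1}/\eqref{non2} is the ball of radius $R$ in the \emph{rescaled resistance metric} $m^{-1}r_{\omega(m)}$, not a Euclidean ball, so the verification is deterministic and trivial: to reach the complement of the resistance ball one must cross resistance at least $R$ on each side, and the parallel combination is therefore $\ge R/2$ regardless of the environment. This is what the paper uses. Your argument instead treats $B_m(0,R)$ as a Euclidean ball and writes $r_m(0,B_m(0,R)^c)\asymp\sum_{|z|\le mR}e^{V_m(z)}$, which confuses a parallel combination (the exit resistance is $r_Lr_R/(r_L+r_R)$) with a series sum, and then requires a detour through the divergence of $\max_{[-R,R]}W$; while that detour can be made to work, it answers a harder question than the one Assumption~\ref{sinai} asks. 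On the time-change bookkeeping, you hedge about an ``ellipticity-type bound'' being convenient, but the relevant fact is cleaner: after rescaling, the holding rate of the $\nu_m$-speed motion is \emph{exactly} a constant multiple of $m^2$, because $\nu_{\omega(m)}(x)=e^{-V^m_x}+e^{-V^m_{x-1}}$ cancels exactly against $r_{\omega(m)}(x,x+1)^{-1}+r_{\omega(m)}(x-1,x)^{-1}$, so the discrete-to-continuous-time comparison is the trivial LLN for a constant-rate Poisson clock and no ellipticity enters.
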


We undertake the task of generalizing the result for Seignourel's model by effectively removing the uniform ellipticity condition. Such a gesture is meaningful in that it allows us to include applications of this theorem to environments that are not uniformly elliptic, such as Dirichlet environments. A particular model of interest that famously falls into this class is the directed edge linearly reinforced random walk on locally finite directed graphs. For an overview on random walks in Dirichlet random environment (RWDE) we refer to \cite{sabot2017overview}.

In a second level the i.i.d. assumption made by Seignourel \cite{seignourel2000discrete} is not essential as soon as we suppose that the potential of the random walk associated with the parameterized environment converges weakly to a two-sided Brownian motion. Recalling some basic definitions from Section \ref{rwre}, for every $m\ge 1$, 
\[
V^m_x:=
\begin{cases}
\frac{1}{\sqrt{m}} \sum_{i=1}^{x} \log \rho_i, & x\ge 1,
\\
0, & x=0,
\\
-\frac{1}{\sqrt{m}} \sum_{i=x+1}^{0}  \log \rho_i, & x\le -1.
\end{cases} 
\]
is the potential of the one-dimensional RWRE changed at step $m$ according to \eqref{stepchange1}, and now we are ready to make our assumption precise. It clarifies why in order to get a Donsker's theorem in random medium one is forced to ``flatten'' the environment in the first place.

\begin{assum} [Sina\u{\i}'s regime] \label{sinai1}
Suppose that $(V_{\lfloor m x\rfloor}^m)_{x\in \mathbb{R}}$ converges weakly to $(W(x))_{x\in \mathbb{R}}$, where $(W(x))_{x\in \mathbb{R}}$ is a two-sided Brownian motion as in \eqref{twosided1}.
\end{assum} 

By direct calculation it can be verified that, for fixed $\om(m)$, $m\ge 1$, the RWRE $(X_n^m)_{n\ge 0}$, $m\ge 1$, is a reversible Markov chain and the stationary reversible measure which is unique up to multiplication by a constant is given by
\begin{equation} \label{notneeded}
\nu_{\om(m)}(x)=
\begin{cases}
(1+\rho_x(m))\left(\prod_{i=1}^{x} \rho_i(m)\right)^{-1}, & x\ge 1,
\\
1+\rho_0(m), & x=0,
\\
(1+\rho_x(m))\prod_{i=x+1}^{0} \rho_i(m), & x\le -1.
\end{cases}
\end{equation}
Here, the reversibility means that, for all $n\ge 0$ and $x,y\in \mathbb{Z}$, we have 
\[
\nu_{\om(m)}(x)P_{\om(m)}(X_n^m=y|X_0^m=x)=\nu_{\om(m)}(y) P_{\om(m)}(X_n^m=x|X_0^m=y).
\]
Sticking to the interpretation of the one-dimensional RWRE as an electrical network with resistances given by $r_{\om(m)}(x-1,x)=e^{V_{x-1}^m}$, $x\in \mathbb{Z}$, we can rewrite \eqref{notneeded} as
\begin{equation} \label{finitemeas}
\nu_{\om(m)}(x)=e^{-V_x^m}+e^{-V_{x-1}^m}, \qquad x\in \mathbb{Z}.
\end{equation}
Moreover, we endow $\mathbb{Z}$ with the resistance metric $r_{\om(m)}: \mathbb{Z}\times \mathbb{Z}\to \mathbb{R}_{+}$ that satisfies $r_{\om(m)}(x,x):=0$, for every $x\in \mathbb{Z}$, and 
\begin{equation} \label{resmet}
r_{\om(m)}(x,y):=\sum_{z=x}^{y-1} r_{\om(m)}(z,z+1)=\sum_{z=x}^{y-1} e^{V_z^m}, \qquad x<y.
\end{equation}

The one-dimensional lattice viewed as a rooted metric measure space endowed with the finite measure and the resistance metric defined in \eqref{finitemeas} and \eqref{resmet} respectively, in Sina\u{\i}'s regime converges weakly in the Gromov-Hausdorff-Prokhorov topology as indicated by the next theorem.

\begin{theorem} \label{space1}
Under Assumption \ref{sinai1},
\[
\left((\mathbb{Z},m^{-1} r_{\om(m)},m^{-1} \nu_{\om(m)},0),V^m\right)\xrightarrow{(d)} \left((\mathbb{R},r,\nu,0),W\right), \qquad m\to \infty,
\]
with respect to the Gromov-Hausdorff-Prokhorov-vague topology, where 
\begin{equation} \label{resist0}
r(x,y):=\int_{[x\wedge y,x\vee y]} e^{W(z)} dz,
\end{equation}
for every $x,y\in \mathbb{R}$ and 
\begin{equation} \label{speed1}
\nu(A):=\int_{A} 2 e^{-W(x)} dx,
\end{equation}
for every $A\in \mathcal{B}(\mathbb{R})$.

\end{theorem}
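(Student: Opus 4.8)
The plan is to deduce the spatial Gromov-Hausdorff-Prokhorov-vague convergence from the functional convergence of the potential assumed in Assumption \ref{sinai1}, by exhibiting for each $m$ an explicit correspondence and coupling in the sense of Lemma \ref{christberry}. First I would work with the Skorohod representation theorem: since $(V^m_{\lfloor mx \rfloor})_{x \in \mathbb{R}} \xrightarrow{(d)} (W(x))_{x \in \mathbb{R}}$, we may assume all these processes are defined on a common probability space on which the convergence holds uniformly on compact subsets of $\mathbb{R}$, almost surely. (Some care is needed because the discrete potential is a step function; I would either pass to a linear interpolation or use the fact that uniform convergence of the interpolated potentials follows from convergence in $D(\mathbb{R})$ together with continuity of the limit $W$.) Then it suffices to prove the deterministic statement: if $v^m : \mathbb{Z} \to \mathbb{R}$ is a sequence of functions with $v^m(\lfloor mx \rfloor) \to W(x)$ uniformly on compacts, then the rescaled rooted spatial metric measure spaces converge.

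Next I would fix $R > 0$ and restrict to the ball of radius $R$ (in the rescaled resistance metric) around $0$; by the definition of the vague topology it is enough to handle each such restriction. Define the natural map $\iota_m : \mathbb{R} \to \mathbb{Z}$ by $\iota_m(x) = \lfloor mx \rfloor$, and let the correspondence $\mathcal{C}_m \subseteq \mathbb{Z} \times \mathbb{R}$ pair each $z \in \mathbb{Z}$ with $z/m \in \mathbb{R}$ (and all nearby reals, so that it is a genuine correspondence containing $(0,0)$). The three quantities in Lemma \ref{christberry} are then estimated as follows. For the distortion, $\tfrac12 \mathrm{dis}(\mathcal{C}_m)$ compares $m^{-1} r_{\om(m)}(x,y) = m^{-1}\sum_{z=\lfloor mx\rfloor}^{\lfloor my\rfloor - 1} e^{v^m(z)}$ with $\int_{[x\wedge y, x\vee y]} e^{W(u)}\,du$; this is a Riemann-sum approximation and the error is controlled by the uniform modulus of continuity of $W$ on $[-R,R]$ together with $\sup_{|x|\le R}|v^m(\lfloor mx\rfloor) - W(x)| \to 0$, using that $e^{(\cdot)}$ is uniformly continuous on the (compact, hence bounded) range of these potentials. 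For the spatial term $\sup_{(z,x)\in\mathcal{C}_m} d_E(\phi(z),\phi'(x)) = \sup |v^m(z) - W(z/m)|$, this vanishes by the same uniform estimate. For the measure term, since both measures are probability measures after normalization, I would take $\pi_m$ to be the pushforward of (normalized) $\nu$ under $x \mapsto (\iota_m(x), x)$; then $\pi_m(\mathcal{C}_m^c) = 0$ by construction, and $D(\pi_m; \nu_{\om(m)}^{\mathrm{norm}}, \nu^{\mathrm{norm}})$ is small because $m^{-1}\nu_{\om(m)}(\lfloor mx\rfloor)$ has density (with respect to the pushforward of Lebesgue measure) converging uniformly to $2e^{-W(x)}$, again by the uniform convergence of the potential. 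Summing these three bounds gives $d_{\mathbb{T}^c_{\mathrm{sp}}}\big((\mathcal{T}_m,V^m)|_R, (\mathcal{T},W)|_R\big) \to 0$ for a.e. $R$, which is exactly the definition of spatial Gromov-Hausdorff-vague convergence, and the Prokhorov strengthening follows since the measures are finite on bounded sets.

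The main obstacle is dealing honestly with the discrete-to-continuum matching of the metric and the root: the resistance metric $r_{\om(m)}$ is a sum over lattice sites while $r$ is an integral, and near the root one must make sure the ball $\bar B(0,R)$ in the rescaled discrete metric is correctly identified with $\bar B(0,R)$ in $(\mathbb{R},r)$ up to a negligible discrepancy — this requires monotonicity of the resistance metric along rays and an inversion of the Riemann-sum estimate to compare the two balls, which is where the uniform recurrence (coercivity of $r$ at infinity, here automatic since $W$ is recurrent) enters to ensure the radius-$R$ balls are genuinely compact and the restriction map is well-behaved. A secondary technical point is that the normalization constants of $\nu_{\om(m)}$ and $\nu$ over the relevant balls must themselves be shown to converge, which follows from the density convergence but should be stated carefully so that the Prokhorov distance between the unnormalized finite measures, not just their normalizations, is what is controlled.
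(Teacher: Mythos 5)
Your proposal is correct and follows essentially the same route as the paper: Skorohod representation to upgrade distributional convergence of the potential to a.s.\ uniform convergence on compacts, the natural floor-function correspondence $(i,s)$ with $i=\lfloor ms\rfloor$, a Riemann-sum estimate for the distortion of the rescaled resistance metric, and a coupling supported on the correspondence, all combined via Lemma \ref{christberry}. The only minor differences are cosmetic: the paper works with a coupling induced by a uniform random variable on $[-R,R]$ rather than the pushforward of normalized $\nu$, and the ball-matching issue you flag is handled implicitly by the Lebesgue-a.e.\ $R$ formulation of the vague topology.
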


\begin{proof}

By Skorohod's representation theorem, there exists a probability space on which the convergence 
\[
(V_{\lfloor m x\rfloor}^m)_{x\in \mathbb{R}}\xrightarrow{(d)} (W(x))_{x\in \mathbb{R}}
\]
holds almost-surely with respect to the uniform norm on compact intervals. Define a correspondence $R_m$ between $\mathbb{Z}$ and $\mathbb{R}$ by setting $(i,s)\in R_m$ if and only if $i=\lfloor m s\rfloor$. We will bound the distortion of $R_m$. Suppose that $(i,s), (j,t)\in R_m$ such that $s\le t$. Then,
\begin{align*}
|m^{-1} r_{\om(m)}(i,j)-r(s,t)|=\left|\frac{1}{m} \sum_{z=i}^{j-1} e^{V_z^m}-\int_{s}^{t} e^{W(u)} du \right|
=\left|\int_{\lfloor m s\rfloor/m}^{\lfloor m t\rfloor/m} e^{V_{\lfloor m u\rfloor}^m} du-\int_{s}^{t} e^{W(u)} du\right|. 
\end{align*}
Since $\lim_{m\to \infty} e^{V_{\lfloor m u\rfloor}^m} \one_{\left[\lfloor m s\rfloor/m,\lfloor m t\rfloor/m\right]}(u)=e^{W(u)} \one_{[s,t]}(u)$, $\text{dis}(R_m)$ converges to 0 uniformly in $s,t\in [-R,R]$, for some $R>0$. 

Recall that $m^{-1} \nu_{\om(m)}$ puts mass $m^{-1} (e^{-V_i^m}+e^{-V_{i-1}^m})$ on $i\in \mathbb{Z}$. Then, we may couple $m^{-1} \nu_{\om(m)}$ and $\nu$ on $\{0,1,...,m-1\}\times [0,1)$ by taking $U\sim U([0,1))$, where $U$ stands for the uniform distribution on $[0,1)$, and taking $\pi$ to be the law of the pair
\[
(\pi_1,\pi_2):=(e^{-V_{\lfloor m U\rfloor}^m}+e^{-V_{\lfloor m U\rfloor-1}^m},2 e^{-W(U)}).
\]
Consequently, $\lfloor m U\rfloor$ is uniform in $\{0,1,...,m-1\}$. Hence,
\[
\pi_1(i)=m^{-1}(e^{-V_i^m}+e^{-V_{i-1}^m})=m^{-1} \nu_{\om(m)}(i),
\]
for every $i=0,1,...,m-1$. Similarly, $\pi_2=\nu|_{[0,1)}$, where the latter stands for the restriction of $\nu$ on $[0,1)$. This is precisely the natural coupling $\pi$ induced by the correspondence $R_m$. Therefore, $\pi(R_m^c)=0$. Referring to Lemma \ref{christberry}, for every $R\ge 0$,
\begin{align*}
&d_{\mathbb{T}_{\text{sp}}^c}((\mathbb{Z},m^{-1} r_{\om(m)},m^{-1} \nu_{\om(m)},0)|_R,V^m|_R),((\mathbb{R},r,\nu,0)|_R,W|_R))
\\
&\le \frac{1}{2} \text{dis}(R_m)
+\pi(R_m^c)
+\sup_{x\in [-R,R]} |V_{\lfloor m x\rfloor}^m-W(x)|,
\end{align*} 
the result follows.

\end{proof}

Let $R>0$. Using \cite[Theorem 5.3]{jun2012forms} yields 
\[
r_{\om(m)}(0,B_m(0,R)^c)\ge \frac{R}{4 N(B_m(0,R),R/2)},
\]
where $N(B,R)=:\min \{|A|: A\subseteq B\subseteq \cup_{y\in A} B_m(y,R)\}$. Since $N(B_m(0,R),R/2)=1$, the lower bound above becomes
\[
\displaystyle \liminf_{m\to \infty} r_{\om(m)}(0,B_m(0,R)^c)\ge R/4.
\]
Consequently, taking the limit as $R\to \infty$ yields that \eqref{non1} is satisfied. Combining this along with Theorem \ref{space1} allows us to deduce that Assumption \ref{sinai} is fulfilled. Thus, as a consequence of Theorem \ref{crucial}, the $\nu_{\om(m)}$-speed motion on $(\mathbb{Z},r_{\om(m)},0)$ converges weakly in $D([0,\infty))$ to the $\nu$-speed motion on $(\mathbb{R},r,0)$. The $\nu_{\om(m)}$-speed motion on $(\mathbb{Z},r_{\om(m)})$ is the continuous-time nearest neighbor random walk on $(\mathbb{Z},r_{\om(m)})$ with jumps rescaled by $m^{-1}$ and time speeded up by 
\[
\nu_{\om(m)}(x)^{-1} (r_{\om(m)}(x,x+1)^{-1}+r_{\om(m)}(x-1,x)^{-1})
=m^2, \qquad x\in \mathbb{Z},
\]
which, is equal in law to $(m^{-1} X_{\lfloor m^2 t\rfloor}^m)_{t\ge 0}$.

It remains to identify (in law) the $\nu$-speed motion on $(\mathbb{R},r,0)$ with the Brox model (see \eqref{broxmodel}).
Fixing the environment $W$, $(X_t)_{t\ge 0}$ is a continuous-time stochastic process on $\mathbb{R}$ having infinitesimal generator of Feller's canonical form
\[
L=\frac{1}{2 e^{-W(x)}} \frac{d}{dx} \left(\frac{1}{e^{W(x)}} \frac{d}{dx}\right).
\]
In other words, $(X_t)_{t\ge 0}$ is a diffusion on $\mathbb{R}$ with differentiable scale function
\[
s(x):=\int_{0}^{x} e^{W(z)} dz, \qquad x\in \mathbb{R},
\]
and speed measure
\[
\nu(A):=\int_{A} 2 e^{-W(x)} dx, \qquad A\in \mathcal{B}(\mathbb{R}),
\]
which is the same as in \eqref{speed1}. 
For each environment $W$, the domain of the generator $\mathcal{D}(L)$ is contained in the  space of differential functions $\mathcal{C}^1(\mathbb{R})$ (for example, see \cite[Proposition 2]{pacheco2016sinai}). 
Then, $X$ is the continuous strong Markov process associated with the Dirichlet form
\[
\mathcal{E}(f,g):=\frac{1}{2} \int \frac{dz}{s'(z)} f'(z)\cdot g'(z),
\]
for every $f, g\in L^2(\nu)\cap \mathcal{C}_{\infty}\cap \mathcal{A}$, such that $\mathcal{E}(f,g)<\infty$, where here $\mathcal{A}$ is the space of absolutely continuous functions. Note that, for all $x, y\in \mathbb{R}$, 
\[
r(x,y)=\int_{[x\wedge y,x\vee y]} s'(z) dz,
\]
which can be seen to induce that $(\mathbb{R},r,0)$ is a locally compact real tree with length measure $s'(z) dz$. The gradient $\nabla_r f$, of $f\in \mathcal{A}$ is the function, which is unique up to $s'(z) dz$-zero sets, that satisfies
\[
\int_{x}^{y} \nabla_r f(z) s'(z) dz=f(y)-f(x),
\]
for every $x, y\in \mathbb{R}$ (see \eqref{grad1}). Therefore, $\nabla_r f=f'/s'$. Using this information, by the following calculation, we find that
\begin{align*}
\mathcal{E}(f,g)&=\frac{1}{2} \int \frac{dz}{s'(z)} f'(z)\cdot g'(z)=
\\
&=\frac{1}{2} \int \frac{dz}{s'(z)} (\nabla_r f(z) s'(z))\cdot (\nabla_r g(z) s'(z))=\frac{1}{2} \int s' dz \nabla_r f\cdot \nabla_r g,
\end{align*}
for every $f, g\in L^2(\nu)\cap \mathcal{C}_{\infty}\cap \mathcal{A}$, such that $\mathcal{E}(f,g)<\infty$, which implies that
\[
\mathcal{E}(f,g)=\frac{1}{2} \int \frac{dz}{s'(z)} f'(z)\cdot g'(z)=\frac{1}{2} \int s' dz \nabla_r f\cdot \nabla_r g,
\]
for every $f, g\in L^2(\nu)\cap \mathcal{C}_{\infty}\cap \mathcal{A}$, such that $\mathcal{E}(f,g)<\infty$. To conclude, this shows that $X$, which is associated with the Dirichlet form as expressed in the first equality above, and the $\nu$-speed motion on $(\mathbb{R},r,0)$, which is associated with the Dirichlet form as expressed in the second equality above, are effectively associated with the same regular Dirichlet form $(\mathcal{E},\bar{\mathcal{D}}(\mathcal{E}) )$. So, in the light of Theorem \ref{uniqueness}, they must be equal in law. We have thus successfully proven Seignourel's result to hold for a wider class of random walks in random environment.    

\begin{theorem} \label{vague1}
Let, for every $m\ge 1$, $(X_n^m)_{n\ge 0}$ denote the random walk associated with the random environment under which Assumption \ref{sinai1} holds. Then,
\[
(m^{-1} X^m_{\lfloor m^2 t\rfloor})_{t\ge 0}\xrightarrow{(d)} (X_t)_{t\ge 0}
\]
in distribution in $D([0,\infty))$, where $(X_t)_{t\ge 0}$ is the Brox diffusion.
\end{theorem}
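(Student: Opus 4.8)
The plan is to assemble the pieces already prepared in this section. First, I would invoke Theorem \ref{space1}: under Assumption \ref{sinai1} the rescaled pointed metric measure trees $((\mathbb{Z}, m^{-1} r_{\om(m)}, m^{-1}\nu_{\om(m)}, 0), V^m)$ converge in distribution, in the Gromov-Hausdorff-Prokhorov-vague topology, to $((\mathbb{R}, r, \nu, 0), W)$, with the potentials carried along as spatial marks. Next I would verify the uniform recurrence condition required by Assumption \ref{sinai}: since every edge $\{z,z+1\}$ carries a strictly positive resistance $e^{V_z^m}$ and, as observed above, $r_{\om(m)}(0, B_m(0,R)^c)\ge R/2$ deterministically, passing to the limit $R\to\infty$ shows that \eqref{non1} (hence \eqref{non2} in the randomized setting) holds. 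Thus Assumption \ref{sinai} is satisfied.

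Second, I would apply Theorem \ref{crucial} to conclude that the $\nu_{\om(m)}$-speed motion on $(\mathbb{Z}, r_{\om(m)}, 0)$, with spatial steps rescaled by $m^{-1}$, converges weakly in $D([0,\infty))$ to the $\nu$-speed motion on $(\mathbb{R}, r, 0)$. To match the time normalization in the statement, I would compute the holding-time rates of the discrete process: from the jump-rate formula $q(x,y)^{-1}=2\,\nu_{\om(m)}(x)\,r_{\om(m)}(x,y)$ and the explicit expressions \eqref{finitemeas} and \eqref{resmet}, the total exit rate at $x$ equals $\nu_{\om(m)}(x)^{-1}\bigl(r_{\om(m)}(x,x+1)^{-1}+r_{\om(m)}(x-1,x)^{-1}\bigr)=m^2$. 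Combining this with the $m^{-1}$ spatial rescaling identifies the $\nu_{\om(m)}$-speed motion in law with $(m^{-1} X^m_{\lfloor m^2 t\rfloor})_{t\ge 0}$, so the left-hand side of the claimed convergence is exactly this process.

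Third, I would identify the limiting $\nu$-speed motion on $(\mathbb{R}, r, 0)$ with the Brox diffusion. The key is that the resistance metric $r(x,y)=\int_{[x\wedge y,\,x\vee y]} e^{W(z)}\,dz$ turns $\mathbb{R}$ into a locally compact real tree whose length measure is $s'(z)\,dz$ with $s(x):=\int_0^x e^{W(z)}\,dz$; therefore the gradient in the sense of \eqref{grad1} satisfies $\nabla_r f=f'/s'$, and substituting this into $\mathcal{E}(f,g)=\tfrac12\int s'\,dz\;\nabla_r f\cdot\nabla_r g$ recovers the Dirichlet form $\tfrac12\int (s')^{-1} f'g'\,dz$. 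The continuous strong Markov process associated with this regular Dirichlet form is the Feller diffusion on $\mathbb{R}$ with scale function $s$ and speed measure $\nu=2e^{-W}\,dx$, i.e.\ the generator in Feller canonical form $\tfrac12 e^{W}\frac{d}{dx}\bigl(e^{-W}\frac{d}{dx}\bigr)$, which is precisely the Brox diffusion \eqref{broxmodel}. Uniqueness of the process attached to a regular resistance Dirichlet form then finishes the identification.

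The main obstacle is this last identification step: one must be careful that the two a priori different descriptions — the $\nu$-speed (resistance-form) motion on $(\mathbb{R}, r)$ and the Feller diffusion with canonical generator — coincide. The delicate points are checking that $(\mathbb{R},r,0)$ really is a locally compact real tree with length measure $s'(z)\,dz$, that $(\mathcal{E},\bar{\mathcal D}(\mathcal E))$ is regular so that its associated Markov process is uniquely determined, and that the form domains agree (here one uses $\mathcal{D}(L)\subseteq \mathcal{C}^1(\mathbb{R})$). Once these are secured, the computation $\nabla_r f=f'/s'$ makes the two Dirichlet forms literally equal, and Theorem \ref{vague1} follows.
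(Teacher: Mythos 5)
Your proposal is correct and follows essentially the same route as the paper: verify the uniform recurrence bound $r_{\omega(m)}(0,B_m(0,R)^c)\ge R/2$, combine with Theorem \ref{space1} to obtain Assumption \ref{sinai}, invoke Theorem \ref{crucial}, identify the $\nu_{\omega(m)}$-speed motion with $(m^{-1}X^m_{\lfloor m^2 t\rfloor})_{t\ge 0}$ via the jump-rate computation, and then match the $\nu$-speed motion on $(\mathbb{R},r,0)$ with the Brox diffusion by computing $\nabla_r f = f'/s'$ and comparing Dirichlet forms. Your closing paragraph on regularity and domain agreement is a slightly more explicit articulation of the identification step, but it is the same argument and not a different approach.
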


\subsection{Convergence of a random walk with barriers}

A model with infinitely many barriers was considered by Carmona in \cite{carmona1997mean} in order to study the large time behavior of the solution of \eqref{broxmodel} when the random coefficient $W'$ is replaced by the formal derivative of a spatial L\'evy process. The random environment consists of a sequence of barriers $(\tau_z)_{z\in \mathbb{Z}}$ such that their increments $(\tau_z-\tau_{z-1})_{z\in \mathbb{Z}}$ form a sequence of independent geometric random variables of parameter $\alpha\in (0,1)$. To construct the  random environment rigorously consider a sequence of Bernoulli random variables $(\xi_z)_{z\in \mathbb{Z}}$ of parameter $\al\in (0,1)$, i.e. 
$P(\xi_1=1)=1-P(\xi_1=0)=\alpha$ and let 
\begin{equation} \label{trials}
\beta_{\alpha}(z):=
\begin{cases}
\sum_{k=1}^{z} \xi_k, &z\ge 1,
\\
0, & z=0,
\\
-\sum_{k=z}^{-1} \xi_k, &z\le -1.
\end{cases}
\end{equation}
Then, setting $\tau_z:=\inf\{r\in \mathbb{Z}: \beta_{\alpha}(r)=z\}$ yields the desired property for the increments of $(\tau_z)_{z\in \mathbb{Z}}$. The random walk in the random environment $\tau$ is introduced as a simple random walk away from the level of the set $\{\tau_z: z\in \mathbb{Z}\}$. When it reaches one of the barriers a biased coin is tossed, and the walk moves to the right with probability $p$ or to the left with probability $q:=1-p$. In other words, the random walk in the random environment $\tau$ is the Markov chain $((X_n)_{n\ge 1},\mathbf{P}_{\tau}^{u},u\in \mathbb{Z})$ that given $\tau$ has transition probabilities
\begin{align*}
1-P_{\tau}(X_{n+1}=z-1|X_n=z)&=P_{\tau}(X_{n+1}=z+1|X_n=z)
 =
\begin{cases}
\frac{1}{2}, & z\notin \{\tau_z: z\in \mathbb{Z}\},
\\
p, &z\in \{\tau_z: z\in \mathbb{Z}\}. 
\end{cases}
\end{align*}

To treat this example, we need to extend the Gromov-Hausdorff-vague topology on rooted metric measure spaces endowed with a c\`adl\`ag function $\phi: \mathbb{R}\to \mathbb{R}$ instead. To do this we replace $\sup_{(z,z')\in \mathcal{C}} d_E(\phi(z),\phi'(z'))$ that appears in the definition of the metric on $\mathbb{T}_{\text{sp}}^c$ with $d_{J_1}(\phi,\phi')$, where $d_{J_1}$ denotes the Skorohod metric on $D(\mathbb{R})$. It can be checked that $\mathbb{T}_{\text{sp}}^c$ with this new metric constitutes a separable metric space \cite[Proposition 2.1]{andriopoulos2018convergence}. In the light of this consideration we can reformulate Assumption \ref{sinai1} to include one-dimensional diffusions with jumps. Namely, suppose that the limiting process $(W(x))_{x\in \mathbb{R}}$ in Assumption \ref{sinai1} is a two-sided L\'evy process and that the convergence in distribution takes place on $D(\mathbb{R})$.

To write down the potential first observe that $\rho_z=\om_z^{-}/\om_z^{+}=1$ if and only if $z\notin \{\tau_z: z\in \mathbb{Z}\}$. Therefore, observing that the set of barriers $\{\tau_z: z\in \mathbb{Z}\}$ is a.s. identical to the set 
$\{z\in \mathbb{Z}: \xi_z=1\}$, we have that 
\[
V_z=
\sum_{k=1}^{z} \log \rho_z=\log \left(\frac{q}{p}\right) \sum_{k=1}^{z} \xi_k=\log \left(\frac{q}{p}\right) \beta_{\alpha}(z), \qquad z\ge 1.
\]
Repeating the same calculation for $z\le -1$ implies that $V_z=\log (q/p) \beta_{\alpha}(z)$, for every $z\in \mathbb{Z}$. 

To obtain in the limit a general L\'evy process, and consequently a Brownian motion in random L\'evy potential as the scaling limit of the random walk with infinitely many barriers, we normalize the random media appropriately.
Let $\lambda>0$, and for every $n\ge \lambda$ consider the normalized environment $(\beta_{\lambda/n}^n(z))_{z\in \mathbb{Z}}$ defined as in \ref{trials}, where this time the Bernoulli trials have probability of success equal to $\lambda/n$. To verify that this is indeed the correct choice, check that the following conditions are satisfied.
\begin{align*}
\sum_{k=1}^{\lfloor n x\rfloor} P(\xi_k=1)=\lambda\cdot \frac{\lfloor n x\rfloor}{n}\to \lambda x\in (0,\infty),
\qquad
\max_{1\le k\le \lfloor n x\rfloor} P(\xi_k=1)=\frac{\lambda}{n}\to 0,
\end{align*}
for every $x>0$. These are sufficient (see \cite[Theorem 3.6.1]{durrett2010probability}) to allow us to deduce from the weak law of small numbers that, for fixed $x>0$, $\beta_{\lambda/n}^n(\lfloor n x\rfloor)$, converges weakly to a Poisson random variable with mean $\lambda x$. For an alternative proof of this fact using characteristic functions see \cite[Appendix B]{durrett2010probability}. Therefore, for the two sided-process $(V_{\lfloor n x\rfloor}^n)_{x\in \mathbb{R}}$ that has independent increments, we have that 
\begin{equation} \label{space3}
(V_{\lfloor n x\rfloor}^n)_{x\in \mathbb{R}}\Rightarrow \log \left(\frac{q}{p}\right) (N(x))_{x\in \mathbb{R}},
\end{equation}
weakly on $D(\mathbb{R})$, where $(N(x))_{x\in \mathbb{R}}$ is Poisson process on the real line. Consequently, since the proof of Theorem \ref{space1} remains unchanged,
\begin{equation} \label{space2}
\left((\mathbb{Z},n^{-1} r_{\tau^n},n^{-1} \nu_{\tau^n},0),V^n\right)\xrightarrow{(d)} \left((\mathbb{R},r,\nu,0),\log(q/p) N\right), \qquad n\to \infty,
\end{equation}
with respect to the Gromov-Hausdorff-Prokhorov-vague topology, where $\tau^n_z:=\inf\{r\in \mathbb{Z}: \beta_{\lambda/n}^n(r)=z\}$. See \eqref{finitemeas} and \eqref{resmet} for a definition of $\nu_{\tau^n}$ and $r_{\tau^n}$ respectively. Slightly abusing notation, $r$ and $\nu$ stand for \eqref{resist0} and \eqref{speed1} with $W$ replaced by $\log(q/p) N$.
The following result, that was conjectured by Carmona \cite{carmona1997mean} and originally proved by Seignourel \cite[Theorem 2]{seignourel2000discrete}, is alternatively deduced by using \eqref{space3}, \eqref{space2} and following the proof of Theorem \ref{vague1}.

\begin{theorem}
Let $\lambda>0$, and for every $n\ge \lambda$ consider the random walk $(X_m^n)_{m\ge 1}$ associated with the random environment $\tau^n$. Then,
\[
(n^{-1} X_{\lfloor n^2 t\rfloor}^n)_{t\ge 0}\xrightarrow{(d)} (X_t)_{t\ge 0},
\]
weakly on $D([0,\infty))$, where $(X_t)_{t\ge 0}$ is a solution to the SDE
\[
d X_t=d B_t-\frac{1}{2} \left(\log \left(\frac{q}{p}\right)N'(X_t)\right), \qquad X_0=0,
\]
where $(B_t)_{t\ge 0}$ is a standard Brownian motion independent of $N$.
\end{theorem}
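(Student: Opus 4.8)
The plan is to follow, step for step, the argument used to prove Theorem~\ref{vague1}, substituting the barriers-model input at each stage. The first input, the weak convergence of the potentials, is already in hand: the weak law of small numbers together with the independence of the increments of $(V^n_{\lfloor nx\rfloor})_{x\in\mathbb{R}}$ gives \eqref{space3}, $(V^n_{\lfloor nx\rfloor})_{x\in\mathbb{R}}\Rightarrow(\log(q/p)\,N(x))_{x\in\mathbb{R}}$. The one point worth stressing is that, since the Poisson limit $N$ is pure jump, this convergence is genuinely in the Skorohod $J_1$ sense on $D(\mathbb{R})$, which is precisely why the relevant version of Theorem~\ref{space1} is the one formulated on the enlarged $\mathbb{T}_{\text{sp}}^c$ in which the marked coordinate is compared with the metric $d_{J_1}$ rather than the uniform metric.

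For the second input the proof of Theorem~\ref{space1} carries over without change: using Skorohod's representation theorem to realise \eqref{space3} almost surely in $D(\mathbb{R})$, one takes the correspondence $R_n=\{(i,s):i=\lfloor ns\rfloor\}$, bounds its distortion by $|n^{-1}r_{\tau^n}(i,j)-r(s,t)|=\big|\int_{\lfloor ns\rfloor/n}^{\lfloor nt\rfloor/n}e^{V^n_{\lfloor nu\rfloor}}\,du-\int_s^t e^{\log(q/p)N(u)}\,du\big|\to 0$ uniformly on compacts (the integrands converging off the Lebesgue-null jump set of $N$ and being boundedly controlled there, exactly as in the proof of Theorem~\ref{space1}), observes that the coupling induced by $R_n$ has $\pi(R_n^c)=0$, and bounds the marked term by $d_{J_1}(V^n,\log(q/p)N)$; this gives \eqref{space2}. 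I would then verify the uniform recurrence condition of Assumption~\ref{sinai}: on the line, the effective resistance from $0$ to the complement of the rescaled ball $B_n(0,R)$ is at least (roughly) the resistance to the nearest vertex outside it, so $\liminf_{n\to\infty}n^{-1}r_{\tau^n}(0,B_n(0,R)^c)\ge R/2$, whence \eqref{non1}, or its probabilistic form \eqref{non2} in the annealed setting, holds. It is worth emphasising that this persists even though $\log(q/p)N$ has a drift, so that $(\mathbb{R},r)$ has \emph{finite} resistance towards the end along which $N\to-\infty$ (when $q>p$): the condition \eqref{non1} concerns the cost of leaving a ball, and that still diverges because the opposite end carries infinite resistance. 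Assumption~\ref{sinai} being verified, Theorem~\ref{crucial} yields that the $\nu_{\tau^n}$-speed motion on $(\mathbb{Z},r_{\tau^n},0)$ converges weakly on $D([0,\infty))$ to the $\nu$-speed motion on $(\mathbb{R},r,0)$, with $r$ and $\nu$ as in \eqref{resist0}, \eqref{speed1} for $W=\log(q/p)N$.

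It then remains to identify the two processes, again as in Theorem~\ref{vague1}. The $\nu_{\tau^n}$-speed motion on $(\mathbb{Z},r_{\tau^n})$ is the continuous-time nearest-neighbour walk with edge lengths rescaled by $n^{-1}$ and time sped up by the factor $\nu_{\tau^n}(x)^{-1}\big(r_{\tau^n}(x,x+1)^{-1}+r_{\tau^n}(x-1,x)^{-1}\big)$, which equals $n^2$ by the same computation as before, so this process is exactly $(n^{-1}X^n_{\lfloor n^2 t\rfloor})_{t\ge 0}$. For the limit I would repeat the scale-function/speed-measure identification: $(\mathbb{R},r,0)$ is a locally compact real tree with length measure $s'(z)\,dz$, where $s(x)=\int_0^x e^{\log(q/p)N(z)}\,dz$, the $\nu$-speed motion is the regular diffusion with this (strictly increasing, piecewise-linear) scale function and speed measure $\nu(dx)=2e^{-\log(q/p)N(x)}\,dx$ — which is exactly the rigorous meaning of $dX_t=dB_t-\tfrac12\log(q/p)N'(X_t)\,dt$ — and the Dirichlet-form computation $\mathcal{E}(f,g)=\tfrac12\int\frac{dz}{s'(z)}f'(z)g'(z)=\tfrac12\int s'\,dz\,\nabla_r f\cdot\nabla_r g$ with $\nabla_r f=f'/s'$ goes through verbatim. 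The only genuinely new feature relative to Theorem~\ref{vague1} is that $W=\log(q/p)N$ is no longer continuous but merely c\`adl\`ag; since on each compact interval it has only finitely many jumps, $e^{\pm W}$ is bounded there away from $0$ and $\infty$, so $s$ is still a homeomorphism and all of the above remains valid. I do not anticipate any genuinely new difficulty: \eqref{space3}, \eqref{space2} and Theorem~\ref{crucial} do the real work, and the main obstacle is the bookkeeping required to keep the $J_1$ topology consistent throughout — in the definition of $\mathbb{T}_{\text{sp}}^c$, in the convergence of the potentials, and in the c\`adl\`ag dependence of $(\mathcal{T},\phi)|_R$ on $R$.
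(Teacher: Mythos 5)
Your proposal is correct and follows exactly the route the paper itself gives, which consists only of the remark that the result is ``deduced by using \eqref{space3}, \eqref{space2} and following the proof of Theorem \ref{vague1}.'' You have usefully spelled out the places where the switch from a continuous to a c\`adl\`ag potential matters (the $J_1$ formulation of $d_{\mathbb{T}_{\text{sp}}^c}$, pointwise convergence of $e^{V^n}$ off the Lebesgue-null jump set in the distortion bound, and the one-sided finiteness of the resistance when $p\neq q$), none of which changes the argument.
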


\begin{remark}
One way to see that the process $X_t$ exists is by noticing that its generator would take the form
\[
\frac{1}{2 e^{- \log\left(\frac{q}{p}\right) N(x)}} \frac{d}{d x} \left(\frac{1}{e^{\log\left(\frac{q}{p}\right) N(x)}}\frac{d}{d x}\right).
\]
Once one defines the conditioned process $X_t$ given the environment $N$, using the law of total probability, one defines what the process $X_t$ really is.
\end{remark}

\subsection{Random walk on the range of a branching random walk} \label{sec1}

We can define biased random walks on graphs generated by a branching random walk, conditioned to have a total of $n$ particles. For a rooted finite ordered tree $T$ with root $\rho$, in which every edge $e$ is marked by a real-valued vector $y(e)$, given a value function $y: E(T)\to \mathbb{R}^d$, we define a map $\phi: T\to \mathbb{R}^d$ by setting $\phi(\rho):=0$ and $\phi(\overleftarrow{\rho}):=0$,
\begin{equation} \label{eval1}
\phi(u):=
\sum_{e\in E_{\rho,u}} y(e), \qquad u\in T\setminus \{\rho\},
\end{equation}
where the sum is taken over the set of all edges contained in the unique path between $\rho$ and $u$. Also, we interpolate linearly along the edges. 

Let $\{(T_n,\phi_n)\}_{n\ge 1}$ be a family of random spatial graph trees, where $T_n$ is generated by a Galton-Watson process with critical offspring distribution $\xi$ conditioned to have total progeny $n$. In addition, we demand $\xi$ to have finite variance $\sigma_{\xi}^2<\infty$ and exponential moments, i.e. $\mathbf{E}(e^{\lambda \xi})<\infty$, for some $\lambda>0$. Conditional on $T_n$, the increments $(y(e))_{e\in E(T_n)}$ of the spatial element $\phi_n$ are independent and identically distributed as a mean 0 continuous random variable $Y$ with finite variance $\Sigma_Y^2<\infty$ ($\Sigma_Y $ is a positive definite $d\times d$-matrix) that furthermore satisfies the tail condition
\begin{equation} \label{fourthorder}
\mathbf{P}(d_E(0,Y)\ge y)=o(y^{-4}),
\end{equation}
where $d_E$ denotes the usual Euclidean metric on $\mathbb{R}^d$. Next, we introduce the contour function of $T_n$. The contour function traces the distance to the root of the position of a particle that visits the outline of $T_n$ from left to right at unit speed.

\begin{definition} [contour function] \label{contourdef}
Let $u_0^n=\rho_n$. Given $u_i^n=u$, let $u_{i+1}^n$ to be, if possible, the leftmost descendant that has not been visited yet. If no descendant is left unvisited, let $u_{i+1}^n$ be the parent of $u$. Then, the contour function of $T_n$ at $i$, is defined as the distance between $\rho_n$ and $u_i^n$:
\[
C_n(i):=d_{T_n}(\rho_n,u_i^n), \qquad i=0,...,2n,
\]
where $d_{T_n}$ denotes the shortest path metric. To extend $C_n$ to $[0,2n]$ interpolate linearly between integer points.
\end{definition}

Given the other assumptions that we are making, \cite[Theorem 2]{janson2005convergence} ensures that the fourth order polynomial tail decay in \eqref{fourthorder} is necessary to obtain the convergence of the tours of $T_n$, i.e. the joint process $(C_n(i),R_n(i))$ supported on $\{0,...,2 n\}$, such that the head function $R_n(i):=\phi_n(u_i^n)$, if $u_i^n$ denotes the $i$-th visited vertex in the contour exploration of $T_n$, keeps record of the points of the branching random walk $\phi_n$. Note that, $C_n$ determines the ``shape'' of the tree (the finite ordered tree without edge lengths) and $R_n$, via its increments, all the successive marks $y(e)$ of the crossed edges as abscissa displacements.

Hence, conditional on $T_n$, if $\rho_n$, $u_1$,...,$u_l$ is an injective path in $T_n$, then the path $\phi_n(\rho_n)$, $\phi_n(u_1)$,...,$\phi_n(u_l)$ can be represented as a tree-indexed random walk in $\mathbb{R}^d$ with $l$ steps, where the index tree is $T_n$ and the increments are independent and identically distributed as $Y$. Thus, taking this into account, it is a fact that the random multiset of trajectories in $\mathbb{R}^d$ established by mapping the paths originating from $\rho_n$ of $T_n$ into $\mathbb{R}^d$ via $\phi_n$ constitute a branching random walk.
Let $\mathcal{G}_n=(V(\mathcal{G}_n),E(\mathcal{G}_n))$ be the graph with vertex set 
\[
V(\mathcal{G}_n):=\{x\in \mathbb{R}^d: x=\phi_n(u)\text{ with }u\in T_n\}
\]
and edge set
\[
E(\mathcal{G}_n):=\{\{x_1,x_2\}\in \mathbb{R}^d\times \mathbb{R}^d: x_i=\phi_n(u_i), \ i=1,2 \text{ with } \{u_1,u_2\}\in E(T_n)\}.
\]
Fix a parameter $\beta\ge 1$, and to each edge $\{x_1,x_2\}\in E(\mathcal{G}_n)$, assign the conductance 
\[
c(\{x_1,x_2\}):=\beta^{\max\{\phi_n^{(1)}(u_1),\phi_n^{(1)}(u_2)\}}
\]
with $\{u_1,u_2\}\in E(T_n)$, where $\phi_n^{(1)}(u_i)$ denotes the first coordinate of $\phi_n(u_i)$, $i=1,2$ in $\mathbb{R}^d$. Observe that $c(\{\phi_n(\overleftarrow{\rho_n}),\phi_n(\rho_n)\})=\beta^{\max\{\phi_n^{(1)}(\overleftarrow{\rho_n}),\phi_n^{(1)}(\rho_n)\}}=1
$, which is compatible with our convention of putting a unit conductance between the root and its base. 
The biased random walk on $\mathcal{G}_n$ is the Markov chain $X=((X_n)_{n\ge 0},\mathbf{P}_{\mathcal{G}_n}^x,x\in V(\mathcal{G}_n))$ on $V(\mathcal{G}_n)$ with transition probabilities given by 
\[
P_{\mathcal{G}_n}(x_1,x_2):=\frac{c(\{x_1,x_2\})}{c(\{x_1\})},
\]
where the normalization is defined by $c(\{x_1\})=\sum_{e\in E(\mathcal{G}_n): x_1\in e} c(e)$. If $\beta>1$, then the biased random walk $X$ has a directional preference towards the first coordinate. On the other hand, if $\beta=1$, there is no bias and we end up with the simple random walk on $\mathcal{G}_n$.

\begin{remark}
In the case we consider a more general bias than just towards one direction: the bias $\ell=\lambda \vec{\ell}$ depends on the strength $\lambda>0$ and the bias direction $\vec{\ell}$ which lies in the unit sphere with respect to the Euclidean metric of $\mathbb{R}^d$, where the conductances at each edge $\{x_1,x_2\}\in E(\mathcal{G}_n)$ are 
\[
c(\{x_1,x_2\})=e^{(\phi_n(u_1)+\phi_n(u_2))\cdot \ell}
\]
with $\{u_1,u_2\}\in E(T_n)$, we believe that we can obtain the same results by the same methods.
\end{remark}

The RWRE on $T_n$ is going to be of particular interest. 
Firstly, adopting the notation that was introduced in Section \ref{rwre}, the random environment  at every vertex $u\in T_n$ will be represented by a random sequence $(\om_{u u_i})_{i=0}^{\xi(u)}$ in $(0,1)^{\xi(u)+1}$ such that $\sum_{i=0}^{\xi(u)} \om_{u u_i}=1$. The RWRE on $T_n$ will be the time-homogeneous Markov chain $X'=((X_n')_{n\ge 0},\mathbf{P}_{\om}^{u},u\in T_n)$ taking values on $T_n$ with transition probabilities given by \eqref{digress}.
To connect this model with the biased random walk on the critical non-lattice branching random walk conditioned to have $n$ particles, suppose that the marginals of the environment are defined, for each $u\in T_n$, as follows.
\[
(\om_{u u_i})_{i=0}^{\xi(u)}=(P_{\mathcal{G}_n}(\phi_n(u),\phi_n(u_i)))_{i=0}^{\xi(u)}.
\]
For this choice of random environment, the quenched law of $\phi_n(X')$ is the same as that of $X$, and consequently the same holds for the corresponding annealed laws. This is immediate regarding the following relations 
\[
(P_{\mathcal{G}_n}(\phi_n(u),\phi_n(u_i)))_{i=0}^{\xi(u)}=\left(\frac{c(\{\phi_n(u),\phi_n(u_i)\})}{c(\{\phi_n(u)\})}: 0\le i\le \xi(u)\right), \qquad u\in T_n.
\]
To connect the first coordinate of the random embedding $\phi_n$ with the potential of the RWRE on $T_n$, let $(\Delta_n(u))_{u\in T_n}$ be its increments process, i.e.
\[
\Delta_n(u):=\phi_n^{(1)}(u)-\phi_n^{(1)}(\overleftarrow{u}).
\] 
If the environment is defined as in the previous paragraph, $
\log c(\{\phi_n(\overleftarrow{u}),\phi_n(u)\})^{-1}=- 
\max\{\phi_n^{(1)}(\overleftarrow{u}),\phi_n^{(1)}(u)\} \cdot \log \beta.
$
Therefore, the potential $(V_n(u))_{u\in T_n}$ of the random walk in a random environment on $T_n$, which is obtained by \eqref{resist1}, satisfies
\begin{equation} \label{pot} 
V_n(u)=-\left(\phi^{(1)}_n(\overleftarrow{u})+\max\{0,\Delta_n(u)\}\right) \cdot \log \beta, \qquad u\in T_n,
\end{equation}
which demonstrates that if the individual increments are small, the potential of the RWRE on $T_n$ is nearly given by a negative constant multiple of the first coordinate of $\phi_n$.

We demonstrate that $V_n$, when rescaled, converges to an embedding of the CRT into the Euclidean space, so that an arc of length $t$ in the CRT is mapped to the range of a Brownian motion run for time $t$. In other words, if $\mathcal{T}$ denotes the CRT , consider a tree-indexed Gaussian process $(\phi(\sigma))_{\sigma\in \mathcal{T}}$, built on a probability space with probability measure $\mathbf{P}$, with $\mathbf{E} \phi(\sigma)=0$ and $\text{Cov}(\phi(\sigma),\phi(\sigma'))=d_{\mathcal{T}}(\rho,\sigma\wedge \sigma') I$, where $I$ is the $d$-dimensional identity matrix and $\sigma\wedge \sigma'$ is the most recent common ancestor of $\sigma$ and $\sigma'$. For almost-every realization of $\mathcal{T}$ (with respect to the normalized It\^{o} excursion measure $\mathbb{N}_1$), there exists a $\mathbf{P}$-a.s. continuous version of $\phi$. 

For an underlying tree that satisfies the assumptions we made in the beginning of the section and the second paragraph that lies therein, \cite[Corollary 10.3]{croydon2009spatial} ensures the following distributional convergence in $\mathbb{T}_{\text{sp}}^{c}$. If $d_{T_n}$ is the shortest path metric and $\mu_{T_n}$ is the uniform probability measure on the vertices of $T_n$, we have that 
\begin{equation} \label{arcsofbm}
\left((T_n,n^{-1/2} d_{T_n},\mu_{T_n},\rho_n),n^{-1/4}\phi_n\right)\xrightarrow{(d)} \left((\mathcal{T},\sigma_{T}  d_{\mathcal{T}},\mu_{\mathcal{T}},\rho),\Sigma_{\phi} \phi\right),
\end{equation} 
where $\sigma_{T}:=\frac{2}{\sigma_{\xi}}$ and $\Sigma_{\phi}:=\Sigma_{Y} \sqrt{\frac{2}{\sigma_{\xi}}}$. The limiting object $(\mathcal{T},d_{\mathcal{T}})$ is a real tree coded by a normalized Brownian excursion $e:=(e(t))_{0\le t\le 1}$ (see \eqref{natura12}). Combining \eqref{pot} with \eqref{arcsofbm} yields
\begin{equation} \label{arcsofbm2}
((T_n,n^{-1/2} d_{T_n},\mu_{T_n},\rho_n),n^{-1/4} \phi_n,n^{-1/4} V_n)\xrightarrow{(d)} ((\mathcal{T},\sigma_{T}  d_{\mathcal{T}},\mu_{\mathcal{T}},\rho),\Sigma_{\phi} \phi,\sigma_{\beta,\phi} \phi^{(1)}),
\end{equation}
in the spatial Gromov-Hausdorff-vague topology, where $\phi^{(1)}$ denotes the first coordinate of $\phi$ and $\sigma_{\beta,\phi}=-(\Sigma_{\phi})_{11}\cdot \log \beta $. It is natural to ask whether there is a certain regime in which the biased random walk on large critical non-lattice branching random walk possesses a scaling limit. Answering the question posited above, \eqref{arcsofbm2} can be informative as it designates a discrete scheme in which the bias must be changed at every step. To be more precice, for every $n\ge 1$, let $(X_m^n)_{m\ge 1}$ denote the biased random walk on $\mathcal{G}_n$ with bias parameter $\beta_n:=\beta^{n^{-1/4}}$, for some $\beta>1$. Observe that, for every $n\ge 1$, 
$(n^{-1/4} V_n(u))_{u\in T_n}$
is the potential of the RWRE on $T_n$ changed at every step $n$ according to 
\[
(c_n(\{x_1,x_2\}))_{\{x_1,x_2\}\in E(\mathcal{G}_n)}:=\left(\beta^{n^{-1/4} \max\{\phi_n^{(1)}(u_1),\phi_n^{(1)}(u_2)\}}\right)_{\{u_1,u_2\}\in E(T_n)}.
\]
Then, in conjunction with Section \ref{rwre} and \eqref{resist2}, for fixed environment, the stationary reversible measure of the weakly biased random walk $(X_m^n)_{m\ge 1}$ is unique up to multiplication by a constant and is given pointwise in $u$ by  
\begin{equation} \label{biasrev}
\nu_n(u)=e^{-n^{-1/4} V_n(u)}+\sum_{u'\sim u,u'\neq\overleftarrow{u}} e^{-n^{-1/4} V_n(u')}, \qquad u\in T_n,
\end{equation}
where the sum is taken over the set of all vertices contained in the neighborhood of $u$ excluding its parent. Moreover, the resistance metric with which $T_n$ is endowed satisfies $r_n(u,u):=0$, for every $u\in T_n$, and 
\begin{equation} \label{biasmetr}
r_n(u_1,u_2):=\sum_{u\in [u_1,u_2]]} e^{n^{-1/4} V_n(u)}, \qquad u_1, u_2\in T_n \text{ with } u_1\neq u_2.
\end{equation}

The rest of the section is devoted in verifying that the analogue of \eqref{arcsofbm2} indeed holds when the shortest path metric $d_{T_n}$ and the uniform probability measure on the vertices of $T_n$ are distorted by continuous functionals of the potential of the weakly biased random walk as can be seen by the form of the finite measure $\nu_n$ and the resistance metric $r_n$ in \eqref{biasrev} and \eqref{biasmetr} respectively. 

\begin{theorem} \label{bohren}
As $n\to \infty$,
\[
\left((T_n,n^{-1/2} r_n,(2 n)^{-1} \nu_v,\rho_n),n^{-1/4} \phi_n,n^{-1/4} V_n\right)\xrightarrow{(d)} \left((\mathcal{T},\sigma_T r_{\phi^{(1)}},\nu_{\phi^{(1)}},\rho),\Sigma_{\phi} \phi,\sigma_{\beta,\phi} \phi^{(1)}\right),
\]
in the spatial Gromov-Hausdorff-vague topology, where
\begin{equation} \label{distdist1}
r_{\phi^{(1)}}(u_1,u_2):=\int_{[[u_1,u_2]]} e^{\sigma_{\beta,\phi} \phi^{(1)}(v)} \lambda(\textnormal{d} v),
\end{equation}
for every $u_1, u_2\in \mathcal{T}$ and 
$\nu_{\phi^{(1)}}$ is the mass measure on $\mathcal{T}$ defined as the image measure by the canonical projection $p_{\tilde{e}}$ of the Lebesgue measure on $[0,1]$, see \eqref{imagmeas}, where 
\begin{equation} \label{distmeas1}
\tilde{e}:=\left(\int_{[[p_e(0),p_e(t)]]} e^{-\sigma_{\beta,\phi} \phi^{(1)}(v)} \lambda(\textnormal{d} v): 0\le t\le 1\right).
\end{equation}
(note that $\tilde{e}: [0,1]\to \mathbb{R}_{+}$ is a (random) continuous function such that $\tilde{e}(0)=\tilde{e}(1)=0$, and therefore $p_{\tilde{e}}$ is well-defined).
\end{theorem}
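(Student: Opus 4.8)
The plan is to upgrade the joint convergence \eqref{arcsofbm2} --- in which $T_n$ carries its \emph{graph} metric $n^{-1/2}d_{T_n}$ and the \emph{uniform} vertex measure $\mu_{T_n}$ --- to the analogous statement with these replaced by the distorted resistance metric $n^{-1/2}r_n$ of \eqref{biasmetr} and the rescaled invariant measure $(2n)^{-1}\nu_n$ of \eqref{biasrev}, the two marks $n^{-1/4}\phi_n$ and $n^{-1/4}V_n$ being left unchanged. First I would apply Skorohod's representation theorem to \eqref{arcsofbm2}, working on a probability space where the convergence is almost sure in the spatial Gromov--Hausdorff--vague topology; since the CRT is compact and, as one reads off from \eqref{pot} and the known order of the fluctuations of $\phi_n$, the diameters of $(T_n,n^{-1/2}r_n)$ are tight, this is in fact almost sure convergence in $\mathbb{T}_{\textnormal{sp}}^c$. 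Using Lemma~\ref{christberry} in the obvious version allowing a pair of marks, for each $n$ I fix a correspondence $\mathcal{C}_n\subseteq T_n\times\mathcal{T}$ with $(\rho_n,\rho)\in\mathcal{C}_n$ and a coupling $\pi_n$ of $\mu_{T_n}$ and $\mu_{\mathcal{T}}$ such that $\operatorname{dis}(\mathcal{C}_n)$ (computed with $n^{-1/2}d_{T_n}$ and $\sigma_T d_{\mathcal{T}}$), $\pi_n(\mathcal{C}_n^c)$, and $\sup_{(u,v)\in\mathcal{C}_n}\bigl(d_E(n^{-1/4}\phi_n(u),\Sigma_\phi\phi(v))+\lvert n^{-1/4}V_n(u)-\sigma_{\beta,\phi}\phi^{(1)}(v)\rvert\bigr)$ all tend to $0$. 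Appealing once more to Lemma~\ref{christberry}, Theorem~\ref{bohren} will then follow once I show (i) that $\operatorname{dis}(\mathcal{C}_n)\to0$ when the metrics are $n^{-1/2}r_n$ and $\sigma_T r_{\phi^{(1)}}$, and (ii) that $(2n)^{-1}\nu_n\to\nu_{\phi^{(1)}}$ weakly; granting (i) and (ii), gluing $(T_n,n^{-1/2}r_n)$ and $(\mathcal{T},\sigma_T r_{\phi^{(1)}})$ into a common space along $\mathcal{C}_n$ and comparing the images of the two measures in Prokhorov distance bounds the infimum in Lemma~\ref{christberry} by a null sequence.

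For (i) I would write, for matched pairs $(u_i,v_i)\in\mathcal{C}_n$,
\[
n^{-1/2}r_n(u_1,u_2)=n^{-1/2}\sum_{u\in[u_1,u_2]]}e^{n^{-1/4}V_n(u)},\qquad \sigma_T r_{\phi^{(1)}}(v_1,v_2)=\int_{[[v_1,v_2]]}e^{\sigma_{\beta,\phi}\phi^{(1)}(v)}\,(\sigma_T\lambda)(\mathrm dv).
\]
As $e^{\sigma_{\beta,\phi}\phi^{(1)}}$ is continuous, hence bounded and uniformly continuous on the compact CRT, the uniform control on $\lvert n^{-1/4}V_n(u)-\sigma_{\beta,\phi}\phi^{(1)}(v)\rvert$ over $\mathcal{C}_n$ lets me replace each summand $e^{n^{-1/4}V_n(u)}$ by $e^{\sigma_{\beta,\phi}\phi^{(1)}(v(u))}$, with $v(u)$ any $\mathcal{C}_n$-partner of $u$, at a uniformly $o(1)$ cost. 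What is left is a Riemann-sum statement: since $\operatorname{dis}(\mathcal{C}_n)\to0$ and real trees have unique geodesics, $\{v(u):u\in[u_1,u_2]]\}$ forms an $o(1)$-net shadowing the geodesic $[[v_1,v_2]]$, so the empirical measure $n^{-1/2}\sum_{u\in[u_1,u_2]]}\delta_{v(u)}$, of total mass $n^{-1/2}d_{T_n}(u_1,u_2)\to\sigma_T d_{\mathcal{T}}(v_1,v_2)$, converges weakly to $\sigma_T\lambda$ restricted to $[[v_1,v_2]]$, whence the first sum converges to the integral. The delicate point, which I expect to be the main obstacle, is to make this uniform over all pairs in $\mathcal{C}_n$ simultaneously: the discrete geodesics involved have order $n^{1/2}$ vertices, so the Riemann error must be dominated for every geodesic at once. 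Instead of arguing geodesic by geodesic I would express $n^{-1/2}r_n$ and $\sigma_T r_{\phi^{(1)}}$ as functionals of the rescaled contour and head pair $(n^{-1/2}C_n(\lfloor 2n\,\cdot\,\rfloor),\,n^{-1/4}R_n(\lfloor 2n\,\cdot\,\rfloor))$ --- which, again by Skorohod, converges uniformly to $(\sigma_T e,\,(\Sigma_\phi\phi)\circ p_e)$ --- and read off the uniform convergence of the functional, exactly as the proof of Theorem~\ref{space1} reads $n^{-1}r_{\omega(m)}$ off the rescaled potential on the line.

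For (ii) I would use that the potential increments along edges are $O(n^{-1/4})$, as is visible from \eqref{pot}. Hence, for bounded uniformly continuous $F$ on the common space and $F_n$ its pullback along $\mathcal{C}_n$, using \eqref{biasrev} and $\sum_{u\in T_n}\xi(u)=n-1$,
\[
\frac{1}{2n}\sum_{u\in T_n}\nu_n(u)F_n(u)=\frac{1}{2n}\sum_{u\in T_n}(1+\xi(u))\,e^{-n^{-1/4}V_n(u)}F_n(u)+o(1);
\]
then, splitting $1+\xi(u)$ and re-indexing the $\xi(u)$-term over non-root vertices through the parent map --- using $e^{-n^{-1/4}V_n(\vec u)}=e^{-n^{-1/4}V_n(u)}+O(n^{-1/4})$ and $F_n(\vec u)=F_n(u)+o(1)$, since the scaled edge lengths vanish --- the right-hand side collapses to $\int e^{-n^{-1/4}V_n}F_n\,\mathrm d\mu_{T_n}+o(1)$, which by \eqref{arcsofbm2} tends to $\int_{\mathcal{T}}e^{-\sigma_{\beta,\phi}\phi^{(1)}(v)}F(v)\,\mu_{\mathcal{T}}(\mathrm dv)$. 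Thus $(2n)^{-1}\nu_n$ converges weakly to the finite measure of density $e^{-\sigma_{\beta,\phi}\phi^{(1)}}$ with respect to the mass measure $\mu_{\mathcal{T}}$, and I would finish by identifying this measure with $\nu_{\phi^{(1)}}$ through the description \eqref{distmeas1}. Feeding (i) and (ii) into Lemma~\ref{christberry} as in the first paragraph yields $d_{\mathbb{T}_{\textnormal{sp}}^c}\to0$ along the Skorohod coupling, and hence the asserted convergence in distribution.
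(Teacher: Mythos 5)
Your proposal is correct in substance and follows the same backbone as the paper's own argument (contour-function correspondence under a Skorohod coupling, then control of the distortion and of the measures), but the two treatments of the measure convergence genuinely differ, and yours is arguably the more transparent one.

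For the metric part (i), your second strategy --- expressing $n^{-1/2}r_n$ and $\sigma_T r_{\phi^{(1)}}$ as functionals of the rescaled contour and head pair and reading off the uniform convergence --- is essentially what the paper does: it rewrites
\[
n^{-1/2}r_n(u_i^n,u_j^n)=\int_{[i,j]]}e^{n^{-1/4}V_n(u_k^n)}\,\lambda_{C_n}(\mathrm dk),\qquad
r_{\phi^{(1)}}(p_e(s),p_e(t))=\int_s^t e^{\sigma_{\beta,\phi}\phi^{(1)}(p_e(r))}\,\lambda_e(\mathrm dr),
\]
with $\lambda_{C_n}$ the length measure of $(T_n,n^{-1/2}d_{T_n})$ pushed to $\{0,\dots,2n\}$ by the contour and $\lambda_e$ the corresponding length measure for the CRT, and then splits the difference into a sup-norm term (controlled by the uniform convergence $n^{-1/4}V_n\to\sigma_{\beta,\phi}\phi^{(1)}$ along the correspondence) plus a ``$\lambda_{C_n}\to\lambda_e$'' term. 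You had correctly identified the uniformity over all geodesics as the sticking point of the naive Riemann-sum argument, and the functional/contour reformulation resolves it in exactly this way, so your instinct was right.

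For the measure part (ii), your treatment is genuinely different from the paper's and is, in my view, cleaner. The paper introduces a distorted contour exploration $\tilde C_n(i)=\sum_{u\in[u_0^n,u_i^n]]}e^{-n^{-1/4}V_n(u)}$, declares $(T_n,n^{-1/2}r_n,u_0^n)$ to be the real tree coded by $\tilde C_n$, and takes the mass measure to be the pushforward of Lebesgue under $p_{\tilde C_n}$. This is delicate: changing the contour's height increments while keeping the original parametrisation of $[0,2n]$ does not change the induced equivalence relation, so the pushforward of Lebesgue by $p_{\tilde C_n}$ is literally $\mu_{T_n}$, not $\nu_n$ (and, symmetrically, the pushforward of Lebesgue on $[0,1]$ by $p_{\tilde e}$ is $\mu_{\mathcal T}$, not a density-distorted measure). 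In addition, with the sign in the exponent of $\tilde C_n$, the tree it codes has edge lengths $e^{-n^{-1/4}V_n}$, not $e^{+n^{-1/4}V_n}$ as required by $r_n$. Your direct test-function computation bypasses this entirely: starting from \eqref{biasrev}, re-indexing the children term over non-root vertices via the parent map, and using that the scaled potential increments along edges vanish uniformly (which holds because the fourth-moment tail condition forces $n^{-1/4}\max_e|\Delta_n|\to0$), you correctly obtain
\[
\frac{1}{2n}\sum_{u\in T_n}\nu_n(u)F_n(u)=\int e^{-n^{-1/4}V_n}F_n\,\mathrm d\mu_{T_n}+o(1)\longrightarrow\int_{\mathcal T}e^{-\sigma_{\beta,\phi}\phi^{(1)}}F\,\mathrm d\mu_{\mathcal T},
\]
which identifies the limit measure unambiguously as the absolutely continuous distortion of $\mu_{\mathcal T}$ with density $e^{-\sigma_{\beta,\phi}\phi^{(1)}}$. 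The one thing you should make explicit at the end is this identification of $\nu_{\phi^{(1)}}$ with $e^{-\sigma_{\beta,\phi}\phi^{(1)}}\,\mathrm d\mu_{\mathcal T}$: as written, \eqref{distmeas1} reads as though $\nu_{\phi^{(1)}}$ is the unweighted pushforward of Lebesgue by the unchanged projection, so one needs to interpret $p_{\tilde e}$ as the projection after the time-change that makes $\tilde e$ the contour traversed at unit speed in the distorted metric (equivalently, to state $\nu_{\phi^{(1)}}$ directly as a density against $\mu_{\mathcal T}$). With that clarification in place, feeding (i) and (ii) into Lemma~\ref{christberry} closes the argument exactly as you describe.
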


\begin{proof}
Recall the definition of $C_n$ from Definition \ref{contourdef}. Using Skorohod's representation, we can assume that we are working on a probability space on which the distributional convergence of the normalized contour process $C_n$ of $T_n$,
\[
(C_{(n)}(t))_{0\le t\le 1}:=\left(\frac{C_n(2 n t)}{\sqrt{n}}: 0\le t\le 1\right), 
\]
to a normalized Brownian excursion $e:=(e(t))_{0\le t\le 1}$, i.e. 
$
C_{(n)}\xrightarrow{(d)} \sigma_T e
$
in $C([0,1],\mathbb{R})$ \cite{aldous1993III}, holds in the almost sure sense.
We build a correspondence between $T_n$ and $\mathcal{T}$ as follows. Let $R_n$ be the image of the set $(i,t)$ by the mapping $(i,t)\mapsto (u_i^n,p_e(t))$ from $\{0,...,2 n\}\times [0,1]$ to $T_n\times \mathcal{T}$ such that  $i=\lfloor 2 n t\rfloor$, where 
$u_i^n$ is the $i$-th visited vertex in the contour exploration of $T_n$ and $p_e$ denotes the canonical projection from $[0,1]$ to $\mathcal{T}$. Note that this correspondence also associates the root $u_0^n$ of $T_n$ with the root $p_e(0)$ of $\mathcal{T}$. 

The first part of the proof consists of showing that the distortion of $R_n$ converges to 0. Now, suppose that $(i,s)\in R_n$. Then
\begin{align*}
&\left|n^{-1/2} r_n(u_0^n,u_i^n)-\sigma_T r_{\phi^{(1)}}(p_e(0),p_e(s))\right|
\\ \\
&=\left|n^{-1/2} \sum_{v\in [u_0^n,u_i^n]]} e^{n^{-1/4} V_n(v)} -\sigma_T \int_{[[p_e(0),p_e(s)]]} e^{\sigma_{\beta,\phi} \phi^{(1)}(v)} \lambda(\textnormal{d} v)\right|
\\ \\
&=\left|\int_{[u_0^n,u_i^n]]} e^{n^{-1/4} V_n(v)} \lambda_n(\textnormal{d} v)-\sigma_T \int_{[[p_e(0),p_e(s)]]} e^{\sigma_{\beta,\phi} \phi^{(1)}(v)} \lambda(\textnormal{d} v)\right|,
\end{align*}
where $\lambda_n$ denotes the normalized length measure of $(T_n,n^{-1/2} d_{T_n},u_0^n)$. The second equality follows from the fact that the normalized measure $\lambda_n$ of the discrete tree $T_n$ shifts the length of one edge to its endpoint that lies further away from the root $u_0^n$, i.e. $\lambda_n(\{u_{k'}^n\})=\lambda_n([u_k^n,u_{k'}^n]])=n^{-1/2}$, for all $u_k^n\sim u_{k'}^n$. The normalized length measure $\lambda_n$ is naturally associated with a $\sigma$-finite measure $\lambda_{C_n}$ on $(\{0,...,2 n\},n^{-1/2} d_{C_n},0)$, such that for all $i\in \{0,...,2 n\}$,
\[
\lambda_{C_n}((0,i])=n^{-1/2} d_{C_n}(0,i)=n^{-1/2} C_n(i)=\lambda_n([u_0^n,u_i^n]]),
\]
where $d_{C_n}$ is defined similarly to \eqref{natura12} replacing $g$ with $C_n$ as introduced in Definition \ref{contourdef}. Recall here that $C_n$ is also a positive excursion with finite length $2 n$. In a similar fashion, let $\lambda_e$ be the unique $\sigma$-finite measure on $([0,1],\sigma_T d_e,0)$, such that for each $t\in [0,1]$,
\[
\sigma_T^{-1} \lambda_e((0,t])=d_e(0,t)=d_{\mathcal{T}}(p_e(0),p_e(t))=\lambda([p_e(0),p_e(t)]]),
\]
where $\lambda$ is the length measure of $\mathcal{T}$. Hence, for every $u_i^n\in T_n$, $i\in \{0,...,2 n\}$, the sum and consequently the distorted distance in \eqref{biasmetr} between $u_0^n$ and $u_i^n$ can be rewritten as 
\begin{align*}
n^{-1/2} r_n(u_0^n,u_i^n)=\int_{[u_0^n,u_i^n]]} e^{n^{-1/4} V_n(v)} \lambda_n(\textnormal{d} v)
=\int_{B_{(n)}^s} e^{n^{-1/4} V_n(u_{\lfloor 2 n r\rfloor}^n)} \lambda_{C_n}(\textnormal{d} r),
\end{align*}
where $B_{(n)}^s:=\{r\le s: \inf_{u\in [r,s]} C_{(n)}(u)=C_{(n)}(r)\}$. Similarly, the distorted distance $r_{\phi^{(1)}}$ (see \eqref{distdist1}) between $p_e(0)$ and $p_e(s)$, for some $s\in [0,1]$, can be reexpressed as
\begin{align*}
r_{\phi^{(1)}}(p_e(0),p_e(s))=\int_{[[p_e(0),p_e(s)]]} e^{\sigma_{\beta,\phi} \phi^{(1)}(v)} \lambda(\textnormal{d} v)
=\sigma_T^{-1} \int_{B_e^s} e^{\sigma_{\beta,\phi} \phi^{(1)}(p_e(r))} \lambda_e (\textnormal{d} r),
\end{align*}
where $B_e^s:=\{r\le s: \inf_{u\in [r,s]} e(u)=e(r)\}$. Hence, for $(i,s)\in R_n$, we have that 
\begin{align*}
&\left|n^{-1/2} r_n(u_0^n,u_i^n)-\sigma_T r_{\phi^{(1)}}(p_e(0),p_e(s))\right|
\\ \\
&=\left|\int_{B_{(n)}^s} e^{n^{-1/4} V_n(u_{\lfloor 2 n r\rfloor}^n)} \lambda_{C_n}(\textnormal{d} r)- \int_{B_e^s} e^{\sigma_{\beta,\phi} \phi^{(1)}(p_e(r))} \lambda_e (\textnormal{d} r)\right|,
\end{align*}
which is bounded above by
\begin{align} \label{laststep}
&
\left|\int_{B_{(n)}^s} e^{\sigma_{\beta,\phi} \phi^{(1)}(p_e(r))}  \lambda_{C_n}(\textnormal{d} r)- \int_{B_{(n)}^s} e^{\sigma_{\beta,\phi} \phi^{(1)}(p_e(r))} \lambda_e(\textnormal{d} r)\right|
\nonumber
\\
\nonumber 
\\
&+\sup_{t\in [0,1]}|e^{\sigma_{\beta,\phi} \phi^{(1)}(p_e(t))}|\cdot \int \left|\one_{B_{(n)}^s}(r)-\one_{B_e^s}(r)\right| \lambda_e(\textnormal{d} r)
\nonumber
\\
\nonumber 
\\
&+\sup_{t\in [0,1]} \left|e^{n^{-1/4} V_n(u_{\lfloor 2 n t\rfloor}^n)}-e^{\sigma_{\beta,\phi} \phi^{(1)}(p_e(t))}\right|\cdot \lambda_{C_n}(B_{(n)}^s).
\end{align}
There are a few steps in the way to prove that each of those terms converges to 0 uniformly in $s\in [0,1]$. By definition,
\[
\left| \lambda_{C_n}(B_{(n)}^s)-\lambda_e(B_e^s)\right|=|C_{(n)}(s)-\sigma_T e(s)|\xrightarrow{n\to \infty} 0,
\]
uniformly in $s\in [0,1]$. Now suppose that $(i,s), (j,t)\in R_n$ with $s\le t$. In a second place, if $r$ is a point at which the minimum of $C_{(n)}$ and $e$ is achieved between $s$ and $t$, when the two processes are coupled in such a way that the convergence above holds in the almost sure sense, we have that
\begin{align*}
&\left| \lambda_{C_n}([s,t])-\lambda_e([s,t])\right|
\\
&=
\left|C_{(n)}(s)+C_{(n)}(t)-2 C_{(n)}(r)-\sigma_T\left(e(s)+e(t)-2 e(r)\right)\right|,
\end{align*}
which also converges to 0 uniformly in $s, t\in [0,1]$. As a consequence, $\lambda_{C_n}$ converges strongly to $\lambda_e$. This entails that the first term of the upper bound in \eqref{laststep} converges to 0 uniformly in $s\in [0,1]$. We can say the same about the second term in the aforementioned upper bound since 
\[
\left|\one_{B_{(n)}^s}(r)-\one_{B_e^s}(r)\right|\xrightarrow{n\to \infty} 0,
\]
uniformly in $s\in [0,1]$. Then, the convergence of this second term follows by an application of the dominated convergence theorem. Finally, the third term of the upper bound in \eqref{laststep}, converges to 0 by \eqref{arcsofbm2}. To bound the distortion $\text{dis}(R_n)$, note that the roots $u_0^n$ and $p_e(0)$ of $T_n$ and $\mathcal{T}$ respectively, enable an orientation sensitive integration which gives
\begin{align*}
\left|n^{-1/2} r_n(u_i^n,u_j^n)-\sigma_T r_{\phi^{(1)}}(p_e(s),p_e(t))\right|&\le 2 \left|n^{-1/2} r_n(u_0^n,u_k^n)-\sigma_T r_{\phi^{(1)}}(p_e(0),p_e(r))\right|
\\
&+\left|n^{-1/2} r_n(u_0^n,u_i^n)-\sigma_T r_{\phi^{(1)}}(p_e(0),p_e(s))\right|
\\
&+\left|n^{-1/2} r_n(u_0^n,u_j^n)-\sigma_T r_{\phi^{(1)}}(p_e(0),p_e(t))\right|,
\end{align*}
where $(k,r)\in R_n$ and $r\in [s,t]$ as before. Each individual term converges to 0 uniformly in $r, s, t\in [0,1]$, and this finishes the first part of the proof that was devoted to the convergence of the distortion $\text{dis}(R_n)$ of the natural correspondence $R_n$ to 0.

We now introduce what we call the distorted contour exploration of $T_n$. In essence, what it does is to collect a weight equal to  $e^{-n^{-1/4} V_n(u_i^n)}$, $i\in \{0,...,2 n\}$, whenever the directed edge connecting the parent of $u_i^n$ to $u_i^n$ is traversed in the canonical contour exploration of $T_n$. To be more precise, set
\[
\tilde{C}_n(i):=\sum_{u\in [u_0^n,u_i^n]]} e^{-n^{-1/4} V_n(u)}, \qquad 0<i<2 n.
\]
By convention, let $\tilde{C}_n(0)=\tilde{C}_{n}(2 n):=0$. Extend $\tilde{C}_n$ by linear interpolation to non-integer times. Then, $(T_n,r_n,u_0^n)$ is a random real tree coded by $\tilde{C}_n$. Moreover, $(\mathcal{T},r_{\phi^{(1)}},p_e(0))$ can be also viewed as a real tree coded by $\tilde{e}$ as in \eqref{distmeas1}. Recall that the mass measure $\mu_{\tilde{C}_n}$ on $T_n$ is defined as the image measure by the canonical projection $p_{\tilde{C}_n}$ of the Lebesgue measure on $[0,2 n]$, see \eqref{imagmeas}. By definition, $(2 n)^{-1} \mu_{\tilde{C}_n}(A)=\ell (\{t\in [0,1]: p_{\tilde{C}_n}(t)\in A\})$, for a Borel set $A$ of $(T_n,r_n,u_0^n)$. The final step in the proof aims at showing that the Prokhorov distance $d_{T_n}^P$ between $(2 n)^{-1} \mu_{\tilde{C}_n}$ and $\nu_{\phi^{(1)}}$ is negligible. This is enough since
\[
d_{T_n}^{P}\left((2 n)^{-1} \mu_{\tilde{C}_n},(2 n)^{-1} \mu_{T_n}\right)\le (2 n)^{-1},
\]
recalling that $\mu_{T_n}$ the uniform probability measure on the vertices of $T_n$. Towards proving that the Prokhorov distance between $(2 n)^{-1} \mu_{\tilde{C}_n}$ and $\nu_{\phi^{(1)}}$ is negligible, we consult the proof of \cite[Proposition 2.10] {abraham2014exit}. Namely, by the second display before the end of the proof that lies in the previous reference, there exists a common metric space $(Z,d_Z)$ such that the following upper bound applies to the Prokhorov distance $d_Z^P$ between $(2 n)^{-1} \mu_{\tilde{C}_n}$ and $\nu_{\phi^{(1)}}$:
\[
d_Z^{P}\left((2 n)^{-1} \mu_{\tilde{C}_n},\nu_{\phi^{(1)}}\right)\le \frac{1}{2} \textnormal{dis}(R_n)+
|\text{supp}(\tilde{C}_n)-\text{supp}(\tilde{e})|,
\]
where $\text{supp}(\cdot)$ stands for the support of the relevant function. Since the right-hand-side converges to 0 as $n\to \infty$, the desired result follows.

\end{proof}

The $\nu_{\phi^{(1)}}$-speed motion on $(\mathcal{T},\sigma_T r_{\phi^{(1)}},\rho)$, which we coined the $\nu_{\phi^{(1)}}$-Brownian motion in a random Gaussian potential $\sigma_{\beta,\phi^{(1)}} \phi^{(1)}$ on the CRT, e.g. \eqref{potent0} and the paragraph below \eqref{potent}, is a novel object that emerges as the annealed scaling limit of the weakly biased random walk $(X_m^n)_{m\ge 1}$ on $T_n$, with bias parameter $\beta^{n^{-1/4}}$, for some $\beta>1$. To make this statement clear, we suppose that the random elements $\left((T_n,n^{-1/2} r_n,(2 n)^{-1} \nu_v,\rho_n),n^{-1/4} \phi_n,n^{-1/4} V_n\right)_{n\ge 1}$ and $\left((\mathcal{T},\sigma_T r_{\phi^{(1)}},\nu_{\phi^{(1)}},\rho),\Sigma_{\phi} \phi,\sigma_{\beta,\phi} \phi^{(1)}\right)$ are built on a probability space with probability measure $\mathbf{P}$. This is possible since the probability measure $\mathbb{M}_n$ on $C([0,1],\mathbb{R}_{+})\times C([0,1],\mathbb{R}^d)$ such that the pair of normalized discrete tours $(C_{(n)},R_{(n)})$ is in its support, converges weakly as a probability measure to $\mathbb{M}$, a probability measure on $C([0,1],\mathbb{R}_{+})\times C([0,1],\mathbb{R}^d)$ defined similarly in such a way that the resulting spatial tree $(\mathcal{T},\phi)$ has marginal $\mathbb{M}$ (see \cite[Theorem 2]{janson2005convergence}). Then, $\mathbf{P}$ is the probability measure of the probability space under which the aforementioned weak convergence holds almost-surely, which we can assume exists using Skorohod's representation theorem. 
The annealed laws $\mathbb{P}^{\rho_n}$ and $\mathbb{P}^{\rho}$ of the weakly biased random walk $(X_m^n)_{m\ge 1}$ and the $\nu_{\phi^{(1)}}$-Brownian motion in a random Gaussian potential $\sigma_{\beta,\phi^{(1)}} \phi^{(1)}$ respectively, are obtained by integrating out the randomness of the state spaces with respect to $\mathbf{P}$. 

Finally, we are able to state our result, as \eqref{non} and \eqref{non2} are satisfied, and therefore so is Assumption \ref{sinai}. \eqref{non} follows from Theorem \ref{bohren}, and \eqref{non2} from the fact that the spaces involved in the spatial Gromov-Hausdorff-vague convergence of Theorem \ref{bohren} are compact.

\begin{theorem} \label{definitive}
Consider the weakly biased random walk $(X_m^n)_{m\ge 1}$ on $T_n$ with bias parameter $\beta^{n^{-1/4}}$, for some $\beta>1$. Then,
\[
\mathbb{P}^{\rho_n}\left(\left(n^{-1/4} \phi_n(X^n_{n^{3/2} t})\right)_{t\ge 0}\in \cdot \right)\to \mathbb{P}^{\rho}\left(\left(\Sigma_{\phi} \phi(X_{t \sigma_T^{-1}})\right)_{t\ge 0}\in \cdot \right),
\]
weakly as probability measures on $D(\mathbb{R}_{+},\mathbb{R}^d)$, where $(X_t)_{t\ge 0}$ is the $\nu_{\phi^{(1)}}$-Brownian motion in a random Gaussian potential $\sigma_{\beta,\phi^{(1)}} \phi^{(1)}$ on the CRT.
\end{theorem}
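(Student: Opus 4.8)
The plan is to obtain Theorem \ref{definitive} by combining the Gromov--Hausdorff-vague convergence of the distorted metric measure trees established in Theorem \ref{bohren} with the abstract invariance principle of Theorem \ref{crucial} (in the embedded form recorded in the Remark following it), once the rescaled weakly biased random walk has been identified with a Poissonised $\nu_n$-speed motion. The first reduction is to replace the biased walk on $\mathcal{G}_n$ by the RWRE on $T_n$: since $Y$ is continuous, for every pair of distinct vertices $u,v$ the displacement $\phi_n(u)-\phi_n(v)$ is a non-trivial signed sum of independent copies of $Y$ and hence has a continuous law, so $\phi_n$ is almost surely injective on the vertices of $T_n$; thus $\mathcal{G}_n$ is a tree isomorphic to $T_n$, and the conductance identities of Section \ref{sec1} show that the quenched (hence annealed) law of $\phi_n(X')$, with $X'$ the RWRE on $T_n$, coincides with that of the biased walk $X$ on $\mathcal{G}_n$. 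This is exactly where the non-lattice hypothesis is used.

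Next I would check that Assumption \ref{sinai} holds for the sequence $(\mathcal{T}_n,V_n):=((T_n,n^{-1/2}r_n,(2n)^{-1}\nu_n,\rho_n),n^{-1/4}V_n)$. Here $n^{-1/2}r_n$ is precisely the distorted resistance metric \eqref{resist12} attached to the rescaled potential $n^{-1/4}V_n$, and $(2n)^{-1}\nu_n$ the corresponding invariant measure \eqref{resist2}, so the structural hypotheses of Assumption \ref{sinai} are met. Projecting the convergence of Theorem \ref{bohren} onto the potential coordinate yields \eqref{non}, with limit $((\mathcal{T},\sigma_T r_{\phi^{(1)}},\nu_{\phi^{(1)}},\rho),\sigma_{\beta,\phi}\phi^{(1)})$. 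Since $(\mathcal{T},\sigma_T r_{\phi^{(1)}})$ is almost surely compact, the probabilistic uniform recurrence condition \eqref{non2} is immediate: for $R$ exceeding the (a.s.\ finite) diameter the ball complement $B_n(\rho_n,R)^c$ is eventually empty and the resistance from $\rho_n$ to it is $+\infty$. Crucially, Theorem \ref{bohren} delivers this convergence \emph{jointly} with the continuous embeddings $\psi_n:=n^{-1/4}\phi_n\to\psi:=\Sigma_\phi\phi$ into $\mathbb{R}^d$, which is exactly the extra input needed to invoke the embedded version of Theorem \ref{crucial}; the annealed framework is legitimate because, as noted before the statement, all objects can be built on one probability space via $\mathbb{M}_n\to\mathbb{M}$ and Skorohod's representation theorem.

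Then I would pin down the time scaling. From $r_n(u,u')=e^{n^{-1/4}V_n(w)}$, with $w$ the endpoint of $\{u,u'\}$ further from the root, and \eqref{biasrev} one reads off the identity $\sum_{u'\sim u}r_n(u,u')^{-1}=\nu_n(\{u\})$ for every $u\in T_n$; hence the total jump rate of the $\nu_n$-speed motion on $(T_n,n^{-1/2}r_n,(2n)^{-1}\nu_n)$ out of any vertex is $q_n(u)=\bigl(2(2n)^{-1}\nu_n(\{u\})\bigr)^{-1}n^{1/2}\sum_{u'\sim u}r_n(u,u')^{-1}=n^{3/2}$, a constant. Consequently this continuous-time process, embedded by $n^{-1/4}\phi_n$, is a constant-rate-$n^{3/2}$ Poissonisation of the embedded weakly biased random walk, i.e.\ equal in law to $(n^{-1/4}\phi_n(X^n_{n^{3/2}t}))_{t\ge0}$. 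Applying the embedded form of Theorem \ref{crucial} yields annealed convergence in $D(\mathbb{R}_{+},\mathbb{R}^d)$ of this process to $(\Sigma_\phi\phi(\tilde X_t))_{t\ge0}$, where $\tilde X$ is the $\nu_{\phi^{(1)}}$-Brownian motion on $(\mathcal{T},\sigma_T r_{\phi^{(1)}})$. Finally, scaling the resistance metric by the constant $\sigma_T$ leaves the hitting probabilities in property (iii) of the $\nu$-Brownian motion untouched but multiplies the mean occupation measure in property (iv) by $\sigma_T$; a direct check shows this forces $\tilde X_t=X_{t\sigma_T^{-1}}$, where $X$ is the $\nu_{\phi^{(1)}}$-Brownian motion on $(\mathcal{T},r_{\phi^{(1)}})$ --- the Brownian motion in the random Gaussian potential $\sigma_{\beta,\phi^{(1)}}\phi^{(1)}$ on the CRT. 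This is the claimed limit.

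The substantive content is thus already carried by the earlier results: Theorem \ref{bohren} supplies the metric, measure and embedding convergence, and Theorem \ref{crucial} turns it into process convergence. The only points in the present proof demanding care are bookkeeping: verifying that the three rescalings (metric by $n^{-1/2}$, measure by $(2n)^{-1}$, spatial embedding by $n^{-1/4}$) conspire to the constant jump rate $n^{3/2}$, that passing from the continuous-time $\nu_n$-speed motion to the discrete chain $(X^n_m)$ evaluated at $n^{3/2}t$ does not change the scaling limit (a routine Poissonisation estimate, since $N(n^{3/2}t)/n^{3/2}\to t$ uniformly on compacts), and that the joint convergence of the embeddings together with \eqref{non2} indeed licenses the embedded form of Theorem \ref{crucial}. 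I expect these --- rather than any deeper difficulty --- to be the main, and modest, obstacle.
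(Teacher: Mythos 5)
Your proposal is correct and follows essentially the same route as the paper: invoke Theorem \ref{bohren} for the spatial Gromov--Hausdorff-vague convergence, note that compactness of the limit gives the probabilistic uniform recurrence \eqref{non2} so Assumption \ref{sinai} holds, and apply the embedded form of Theorem \ref{crucial}, with the jump-rate identity $\sum_{u'\sim u}r_n(u,u')^{-1}=\nu_n(\{u\})$ yielding the constant rate $n^{3/2}$ and the $\sigma_T$ metric rescaling producing the time change $t\mapsto t\sigma_T^{-1}$ (these two bookkeeping steps are carried out in the paper's heuristic paragraph following Theorem \ref{grande} rather than inside the proof of Theorem \ref{definitive}, but they are the same computations). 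Your explicit observation that continuity of $Y$ makes $\phi_n$ a.s.\ injective, so the biased walk on $\mathcal{G}_n$ and the RWRE on $T_n$ have the same embedded law, makes the non-lattice reduction slightly more explicit than the paper does, but this too matches the paper's Section \ref{sec1} setup.
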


\subsection{Edge-reinforced random walk on large critical trees} \label{errwtrees}

Let $(\al_0^n(e))_{e\in E(T_n)}$ be a sequence of positive initial weights on $E(T_n)$, the set of edges of a critical Galton-Watson tree with finite variance for the aperiodic offspring distribution, the model that was fully described in Section \ref{sec1}. The edge-reinforced random walk (ERRW) on $T_n$, started from $\rho_n$, is introduced as the discrete time process $Z=((Z_k^n)_{k\ge 1},\mathbf{P}_{\alpha_0}^u,u\in T_n)$ with transition probabilities 
\[
\mathbf{P}_{\alpha_0}(Z_{k+1}^n=u|(Z_j^n)_{0\le j\le k})
=\one_{\{u\sim Z_k^n\}}\frac{N_k^n(\{Z_k^n,u\})}{\sum_{u'\sim Z_k^n} N_k^n(\{Z_k^n,u'\})},
\]
where for an edge $e\in E(T_n)$,
$N_k^n(e):=\al_0^n(e)+\# \{0\le j\le k-1: \{Z_j^n,Z_{j+1}^n\}=e\}$. In other words, at time $k$, this walk jumps through a neighboring edge $e$ with probability proportional to $N_k^n(e)$, which is initially equal to $\al_0^n(e)$ and then increases by 1 each time the edge $e$ is crossed before time $k$. The initial weights we are going to be interested in choosing are
\begin{equation} \label{constant1}
\alpha_0^n(e)=2^{-1} n^{1/2}, \qquad e\in E(T_n).
\end{equation}
The following theorem due to Sabot and Tarr\`es describes the ERRW as a mixture of Markovian random walks. 

\begin{theorem} [Sabot-Tarr\`es \cite{sabot2015edge}] \label{readoff}
Let $\al^n:=(\al^n(e))_{e\in E(T_n)}$ independent random variables with $\al^n(e)\sim \Gamma(\al_0^n(e),1)$. Let $(\om^n(e_i(u)): 0\le i\le \xi(u))_{u\in T_n}$ be an independent family of independent random variables, that conditional on $\al^n$, are distributed according to the density 
\begin{equation} \label{inversegamma}
\sqrt{\frac{\al^n(e_i(u))}{2 \pi}} e^{-2 \al^n(e_i(u)) \sinh \left(\frac{x}{2}\right)^2+\frac{x}{2}} dx,
\end{equation}
where $(e_i(u))_{i=0}^{\xi(u)}:=(\{u,u_i\}: 0\le i\le \xi(u))$. Define 
$\mathcal{U}^n:=(\mathcal{U}^n(u))_{u\in T_n}$
by
\[
\mathcal{U}^n(u):=
\begin{cases}
\sum_{e\in E_{\rho_n,u}} \om^n(e), & u\neq \rho_n,
\\
0, & u=\rho_n,
\end{cases}
\]
where $E_{\rho_n,u}$ is the set of all edges contained in the unique path connecting $\rho_n$ and $u$. $\mathcal{U}^n$ is interpolated linearly along the edges. Consider the nearest neighbor random walk on $T_n$, started from $\rho_n$, that conditional on $(\al^n,\mathcal{U}^n)$, moves from $u$ to $u_i$ with probability proportional to
\[
\al^n(e_i(u)) e^{-(\mathcal{U}^n(u)+\mathcal{U}^n(u_i))}.
\]
Then, under the annealed law it has the same distribution as the ERRW $(Z_k^n)_{k\ge 0}$.
\end{theorem}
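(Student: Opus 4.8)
The plan is to prove the representation on the tree $T_n$ by combining the classical urn analysis of the ERRW on a tree with an explicit change of variables identifying the mixing measure with the law of $(\alpha^n,\mathcal{U}^n)$. The statement is in any case a special case of the general Sabot--Tarr\`es representation \cite{sabot2015edge} applied to the finite graph $T_n$, but the tree structure permits a short self-contained argument, which is what I would write out.

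The first step is the urn decomposition. With $T_n$ rooted at $\rho_n$, the sequence of edges through which the ERRW leaves a fixed vertex $u$ is a generalised P\'olya urn: on a tree each departure from $u$ along an edge $e_i(u)$ is followed by a sojourn in the subtree beyond $e_i(u)$ and a return to $u$ \emph{along the same edge}, so $N(e_i(u))$ increases by $2$ between consecutive departures from $u$, while for $u\neq\rho_n$ the first arrival at $u$ — which necessarily occurs along the parent edge $e_0(u)$ — has already incremented $N(e_0(u))$ by $1$. By de Finetti's theorem for P\'olya urns (this is the classical picture for the ERRW on trees \cite{robin1988phase}, recalled in the introduction as one urn per vertex), conditionally on the urn limits the ERRW is the random walk in the random environment whose transition probabilities at each vertex are those limits, and this environment is independent across vertices and Dirichlet-distributed at $u$ with parameter $\tfrac12\alpha_0^n(e_i(u))$ on each child edge $e_i(u)$ and, when $u\neq\rho_n$, parameter $\tfrac12(\alpha_0^n(e_0(u))+1)$ on the parent edge.

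The second step is the matching. In the process of the statement, the transition weight $\alpha^n(e_i(u))e^{-(\mathcal{U}^n(u)+\mathcal{U}^n(u_i))}$ contains the factor $e^{-2\mathcal{U}^n(u)}$, since $\mathcal{U}^n(u_i)=\mathcal{U}^n(u)+\omega^n(e_i(u))$ for a child and $\mathcal{U}^n(u_i)=\mathcal{U}^n(u)-\omega^n(e_0(u))$ for the parent; this factor is common to all neighbours of $u$ and cancels on normalisation, so the induced environment at $u$ is the normalisation of the vector with entries $\alpha^n(e_i(u))e^{-\omega^n(e_i(u))}$ on the child edges and $\alpha^n(e_0(u))e^{+\omega^n(e_0(u))}$ on the parent edge. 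It therefore suffices to show that for each edge $e$ the pair $(B_e,C_e):=(\alpha^n(e)e^{-\omega^n(e)},\,\alpha^n(e)e^{+\omega^n(e)})$ consists of \emph{independent} Gamma variables of shapes $\tfrac12\alpha_0^n(e)$ and $\tfrac12\alpha_0^n(e)+\tfrac12$ and common rate $\tfrac12$, and that these pairs are independent across edges. Independence across edges is immediate from the conditional independence of $(\omega^n(e))_e$ given $(\alpha^n(e))_e$ together with independence of the $\alpha^n(e)$; the within-pair statement is a direct computation. The identity $1+2\sinh^2(x/2)=\cosh x$ brings the joint density of $(\alpha^n(e),\omega^n(e))$ to $\propto a^{\alpha_0^n(e)-1/2}e^{x/2}e^{-a\cosh x}$ (the $e^{\mp a}$ from the Gamma density and the $e^{\pm a}$ hidden in $e^{-2a\sinh^2(x/2)}$ cancelling), and the substitution $(a,x)\mapsto(b,c)=(ae^{-x},ae^{+x})$, with Jacobian $\tfrac12(bc)^{-1/2}$ and $a\cosh x=\tfrac12(b+c)$, turns this into $\propto b^{\alpha_0^n(e)/2-1}\,c^{\alpha_0^n(e)/2-1/2}\,e^{-(b+c)/2}$, the claimed product of Gamma densities. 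Reassembling: at each $u\neq\rho_n$ the parent edge contributes $C_{e_0(u)}$ of shape $\tfrac12(\alpha_0^n(e_0(u))+1)$ and each child edge contributes $B_{e_i(u)}$ of shape $\tfrac12\alpha_0^n(e_i(u))$, all with common rate $\tfrac12$ and all independent, so the environment at $u$ is Dirichlet with exactly the parameters of the first step; vertices sharing an edge stay independent because $B_e$ and $C_e$ are independent. Hence the environment of the process in the statement has the same product-of-Dirichlets law as the de Finetti measure of the ERRW, and conditionally on the environment the two processes are the same time-homogeneous Markov chain, which gives equality of the annealed laws.

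The step I expect to be the crux is the matching computation: the change of variables must land on \emph{exactly} the parameters produced by the urn analysis, so the asymmetric extra $\tfrac12$ that falls on the endpoint of $e$ further from the root has to reproduce the $+1$ coming from the first traversal of a parent edge, and the root must be treated as a genuine root with no parent edge. It is worth noting that the collapse of the $\mathcal{U}^n$-field into an irrelevant common factor at each vertex is special to trees; on a graph with cycles $\mathcal{U}^n$ genuinely encodes the loop structure and cannot be removed in this way, which is why the general statement of \cite{sabot2015edge} is substantially deeper.
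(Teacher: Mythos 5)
Your proof is correct and takes a genuinely different route from the paper's. The paper does not prove the theorem from scratch; it reads it off from Theorem~2 of Sabot--Tarr\`es for general finite graphs, reducing the joint density of the potential restricted to a path $[[x,y]]$ (the factor $D^n_{x,y}$ collapses because a tree has a unique spanning tree) and thereby identifying the $\om^n(e)$ as independent with density \eqref{inversegamma}. You instead exploit the tree structure directly: first the Pemantle urn decomposition of the ERRW on a tree, giving the de~Finetti mixing measure as a product of Dirichlet laws with parameters $\tfrac12\al_0^n$ on child edges and $\tfrac12(\al_0^n+1)$ on the parent edge; then the explicit Gamma algebra under $(a,x)\mapsto(ae^{-x},ae^{x})$, using $1+2\sinh^2(x/2)=\cosh x$, to show that $(B_e,C_e)=(\al^n(e)e^{-\om^n(e)},\al^n(e)e^{\om^n(e)})$ is a pair of independent $\Gamma(\al_0^n(e)/2,\tfrac12)$ and $\Gamma(\al_0^n(e)/2+\tfrac12,\tfrac12)$ variables, and that the normalised weight vector at each vertex (after cancelling the common factor $e^{-2\mathcal{U}^n(u)}$) is exactly that Dirichlet. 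The parameters match, including the asymmetric $+1$ on the parent edge, and neighbouring vertices stay independent because $B_e$ and $C_e$ are independent. One bonus of your route: the consistency of normalisation in your change of variables (it reduces to Legendre duplication) gives a direct, non-probabilistic verification that \eqref{inversegamma} is a probability density, a point the paper's remark flags as non-obvious and resolves only by quoting the probabilistic argument of \cite{sabot2015edge}. The trade-off is that your argument is specific to trees (as you note, on graphs with cycles the $\mathcal{U}^n$-field does not collapse), whereas the paper's citation invokes the general theorem; for the purposes of this paper either is adequate, and yours is arguably the more transparent.
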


\begin{remark}
To read off Theorem \ref{readoff} from Theorem 2 in \cite{sabot2015edge}, which is given for finite graphs, one has to take $x, y\in T_n$ and consider only $z\in [[x,y]]$. The density distribution that corresponds to Theorem 2 in \cite{sabot2015edge} is for the random variables
\[
\bar{\mathcal{U}}^n_{x,y}(z)=-\mathcal{U}^n(z)+\frac{1}{\#[[x,y]]} \sum_{z'\in [[x,y]]} \mathcal{U}^n(z'), \qquad z\in [[x,y]].
\]
Then, 
\[
\sum_{z\in [[x,y]]} \bar{\mathcal{U}}^n_{x,y}(z)=0.
\]
The law of the random vector $(\bar{\mathcal{U}}^n_{x,y}(z))_{z\in [[x,y]]}$, conditioned on $\al^n$, on the subspace 
\[
\mathcal{H}_0:=\{(\bar{u}(z))_{z\in [[x,y]]}, \ \sum_{z\in [[x,y]]} \bar{u}(z)=0\}
\]
is given by  
\begin{equation} \label{density1}
\frac{1}{(2 \pi)^N} e^{\bar{u}(x)} e^{-H_{x,y}^n(\al^n,\bar{u})} \sqrt{D_{x,y}^n(\al^n,\bar{u})},
\end{equation}
where $2 N+1=\#[[x,y]]$,
\[
H_{x,y}^n(\al^n,\bar{u})=2 \sum_{\{i,j\}\in E_{x,y}} \al^n(\{i,j\}) \sinh\left(\frac{\bar{u}(i)-\bar{u}(j)}{2}\right)^2
\]
and
\[
D_{x,y}^n(\al^n,\bar{u})=\prod_{\{i,j\}\in E_{x,y}} \al^n(\{i,j\}) e^{\bar{u}(i)+\bar{u}(j)}.
\]
As remarked in N.B. (2) just below the statement of Theorem 2 in \cite{sabot2015edge}, the factor $D_{x,y}^n(\al^n,\bar{u})$ could be written as a product on spanning trees with edges conditionally weighted by  
\[
\al^n(\{i,j\}) e^{\bar{u}(i)+\bar{u}(j)}, \qquad \{i,j\}\in E_{x,y}.
\]
Here in the tree setting there is a unique spanning tree including all the edges. Further calculations suggest that \eqref{density1} is replaced by
\begin{align*}
&\frac{1}{(2 \pi)^N} e^{\bar{u}(x)} \prod_{\{i,j\}\in E_{x,y}} \sqrt{\al^n(\{i,j\})}e^{-2 \al^n(\{i,j\}) \sinh\left(\frac{\bar{u}(i)-\bar{u}(j)}{2}\right)^2+\frac{\bar{u}(i)+\bar{u}(j)}{2}}
\\
=&e^{\bar{u}(x)-\bar{u}(x)/2-\bar{u}(y)/2}  \prod_{\{i,j\}\in E_{x,y}} \sqrt{\frac{\al^n(\{i,j\})}{2 \pi}}e^{-2 \al^n(\{i,j\}) \sinh\left(\frac{\bar{u}(i)-\bar{u}(j)}{2}\right)^2}
\\
=&e^{\bar{u}(x)/2-\bar{u}(y)/2}  \prod_{\{i,j\}\in E_{x,y}} \sqrt{\frac{\al^n(\{i,j\})}{2 \pi}}e^{-2 \al^n(\{i,j\}) \sinh\left(\frac{\bar{u}(i)-\bar{u}(j)}{2}\right)^2}
\\
=&e^{\sum_{\{i,j\}\in E_{x,y}} \frac{\bar{u}(i)-\bar{u}(j)}{2}}  \prod_{\{i,j\}\in E_{x,y}} \sqrt{\frac{\al^n(\{i,j\})}{2 \pi}}e^{-2 \al^n(\{i,j\}) \sinh\left(\frac{\bar{u}(i)-\bar{u}(j)}{2}\right)^2}
\\
=& \prod_{\{i,j\}\in E_{x,y}} \sqrt{\frac{\al^n(\{i,j\})}{2 \pi}}e^{-2 \al^n(\{i,j\}) \sinh\left(\frac{\bar{u}(i)-\bar{u}(j)}{2}\right)^2+\frac{\bar{u}(i)-\bar{u}(j)}{2}},
\end{align*}
where, to derive the third equality, we made use of the fact that $\bar{u}\in \mathcal{H}_0$, which deduced 
\[
\prod_{\{i,j\}\in E_{x,y}} e^{\frac{\bar{u}(i)+\bar{u}(j)}{2}}=e^{\sum_{\{i,j\}\in E_{x,y}} \frac{\bar{u}(i)+\bar{u}(j)}{2}}=e^{-\bar{u}(x)/2-\bar{u}(y)/2}.
\]
Note that 
\[
\prod_{\{i,j\}\in E_{x,y}} \sqrt{\frac{\al^n(\{i,j\})}{2 \pi}}e^{-2 \al^n(\{i,j\}) \sinh\left(\frac{\bar{u}(i)-\bar{u}(j)}{2}\right)^2+\frac{\bar{u}(i)-\bar{u}(j)}{2}}
\]
is the law of the random family $(\om^n(e): e\in E_{x,y})$ of independent random variables, that conditional on $\al^n$, is given by \eqref{inversegamma}. It is not obvious that \eqref{inversegamma} is a probability density. The argument presented in \cite{sabot2015edge} is probabilistic: \eqref{density1} is the law of the random variables $\bar{\mathcal{U}}^n_{x,y}(z)$ on the subspace $\mathcal{H}_0$.
\end{remark}

As a consequence of Theorem \ref{readoff} and \eqref{resist1}, the potential $\mathcal{V}^n:=(\mathcal{V}^n(u))_{u\in T_n}$ of the random walk in random environment $(\al^n,\mathcal{U}^n)$ satisfies
\begin{align*} 
\mathcal{V}^n(u)=
\begin{cases}
\mathcal{U}^n(\overleftarrow{u})+\mathcal{U}^n(u)+\log \al^n(\{\overleftarrow{u},u\})^{-1}, & u\neq \rho_n,
\\
0, & u=\rho_n.
\end{cases}
\end{align*}
The aim of the following series of lemmas is to establish the distributional convergence of this potential and examine its limit. In what follows, it is useful to recall the natural correspondence $R_n$ between $T_n$ and $\mathcal{T}$ that was extensively used in the proof of Theorem \ref{bohren}. 
This correspondence takes pairs of projections from $[0,1]$ with an additional equivalence structure in such a way that the $i$-th visited vertex in the contour exploration of $T_n$, denoted by $u_i^n$, is associated with $p_e(t)$ of $\mathcal{T}$, if $i=\lfloor 2 n t\rfloor$ and $p_e$ denotes the canonical projection from $[0,1]$ to $\mathcal{T}$ as usual.

\begin{lemma} \label{before}
Let $t\in [0,1]$. Then,
\[
\sup_{(i,t)\in R_n} \bigg |\frac{1}{2}\sum_{e\in E_{u_0^n,u_i^n}} \al^n(e)^{-1}-\sigma_T d_{\mathcal{T}}(p_e(0),p_e(t))\bigg |
\]
converges to 0 in probability.
\end{lemma}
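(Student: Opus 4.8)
\textit{Proof strategy.} The plan is to condition on the tree $T_n$ and to exploit that, given $T_n$, the weights $(\al^n(e))_{e\in E(T_n)}$ are independent with $\al^n(e)\sim\Gamma(\al_0^n(e),1)$, $\al_0^n(e)=2^{-1}n^{1/2}$. Writing $S_i:=\tfrac12\sum_{e\in E_{u_0^n,u_i^n}}\al^n(e)^{-1}$, I would split $S_i=A_i+B_i$ into its conditional mean $A_i:=\mathbf E[S_i\mid T_n]$ and the centred fluctuation $B_i:=S_i-A_i$, and prove that both $\sup_{(i,t)\in R_n}|A_i-d_{\mathcal T}(p_e(0),p_e(t))|$ and $\sup_{(i,t)\in R_n}|B_i|$ tend to $0$ in probability. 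As in the proof of Theorem \ref{bohren}, I would pass to a probability space on which the normalized contour convergence of $T_n$ holds almost surely with respect to the uniform norm; in particular $n^{-1/2}C_n(\lfloor 2n\cdot\rfloor)$ converges uniformly on $[0,1]$ and the height of $T_n$ is $O(n^{1/2})$ in probability, while $R_n$ couples $u_i^n$ with $p_e(t)$ whenever $i=\lfloor 2nt\rfloor$.

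For the conditional mean, since $\mathbf E[\al^n(e)^{-1}\mid T_n]=(\al_0^n(e)-1)^{-1}$ and the path $E_{u_0^n,u_i^n}$ contains exactly $C_n(i)=d_{T_n}(u_0^n,u_i^n)$ edges, one has $A_i=\tfrac12 C_n(i)/(\al_0^n(e)-1)=n^{-1/2}C_n(i)\,(1+o(1))$ uniformly in $i\le 2n$, using the choice of initial weights \eqref{constant1}. Combined with the uniform contour convergence recalled above (and $n^{-1/2}C_n(i)$ being uniformly bounded in probability), this gives $\sup_{(i,t)\in R_n}\big|A_i-d_{\mathcal T}(p_e(0),p_e(t))\big|\to 0$ in probability.

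The heart of the matter is the uniform control of $B_i=\sum_{e\in E_{u_0^n,u_i^n}}\xi_e$, where $\xi_e:=\tfrac12\al^n(e)^{-1}-\tfrac1{2(\al_0^n(e)-1)}$ are conditionally independent, centred, with $\mathbf E[\xi_e^2\mid T_n]=\tfrac14(\al_0^n(e)-1)^{-2}(\al_0^n(e)-2)^{-1}=O(n^{-3/2})$. Here I would use that the contour exploration of $T_n$ is its depth-first Euler tour, so that an edge $e$ lies on the path $E_{u_0^n,u_i^n}$ precisely when $a_e\le i<b_e$, where $a_e<b_e$ are the contour times at which the subtree below $e$ is, respectively, entered and left; consequently
\[
B_i=\sum_{e\in E(T_n)}\xi_e\big(\one\{a_e\le i\}-\one\{b_e\le i\}\big)=U_i-V_i,\qquad U_i:=\sum_{e}\xi_e\one\{a_e\le i\},\quad V_i:=\sum_{e}\xi_e\one\{b_e\le i\}.
\]
Given $T_n$, the orderings of $E(T_n)$ by $a_e$ and by $b_e$ are deterministic, so $i\mapsto U_i$ and $i\mapsto V_i$ run through the partial sums of the family $(\xi_e)_{e\in E(T_n)}$ in two fixed orders; Kolmogorov's (equivalently Doob's $L^2$) maximal inequality then gives $\mathbf E[\max_i U_i^2\mid T_n]\le 4\sum_{e\in E(T_n)}\mathbf E[\xi_e^2\mid T_n]=O(n\cdot n^{-3/2})=O(n^{-1/2})$, and likewise for $V$. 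Hence $\sup_i|B_i|\le\max_i|U_i|+\max_i|V_i|$ is $O(n^{-1/4})$ in probability, and the lemma follows by the triangle inequality together with the previous paragraph.

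I expect the step requiring the most care to be precisely this passage from a pointwise to a uniform estimate over the $\asymp n$ contour times: the variable $\al^n(e)^{-1}$ has no exponential moments, so a crude union bound over root-to-leaf paths is not available, and the efficient route is the decomposition $B_i=U_i-V_i$ of the current-spine sum into two genuine partial-sum processes over the entire edge set, each of which is immediately amenable to a maximal inequality.
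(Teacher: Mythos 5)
Your treatment of the conditional mean — computing $\mathbf E[\al^n(e)^{-1}\mid T_n]=(\al_0^n(e)-1)^{-1}$, summing along the path, and invoking the almost-sure uniform contour convergence via Skorohod — matches the paper's handling of the deterministic part. The genuine divergence is in the fluctuation term, and your version is the more careful of the two. The paper writes ``Using Doob's martingale inequality'' to bound $\mathbf P(\sup_{(i,t)\in R_n}\frac12|\sum_{e\in E_{u_0^n,u_i^n}}[\al^n(e)^{-1}-\mathbf E(\al^n(e)^{-1})]|>\eta)$ directly by $\mathrm{Var}(\sum_{e\in E_{u_0^n,u_i^n}}\al^n(e)^{-1})/(4\eta^2)$, but $i\mapsto\sum_{e\in E_{u_0^n,u_i^n}}\xi_e$ is \emph{not} a martingale in the contour time $i$: as $i$ runs over $\{0,\dots,2n\}$, the set of edges $E_{u_0^n,u_i^n}$ grows and shrinks non-monotonically, and the paper's displayed right-hand side still carries an $i$-dependence despite the supremum on the left, so the maximal inequality does not apply as stated. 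Your Euler-tour decomposition $B_i=U_i-V_i$, with edges entering $U$ in the (deterministic given $T_n$) order of their entry times $a_e$ and entering $V$ in the order of exit times $b_e$, turns the spine sum into a difference of two genuine partial-sum martingales of the conditionally independent centred $\xi_e$'s, to each of which Doob's $L^2$ maximal inequality applies cleanly, giving $\mathbf E[\max_i U_i^2\mid T_n]\le 4\sum_e\mathbf E[\xi_e^2\mid T_n]=O(n^{-1/2})$ and hence $\sup_i|B_i|=O_p(n^{-1/4})$. Your remark that a naive union bound over root-to-leaf paths is not available because $\al^n(e)^{-1}$ has no exponential moments correctly explains why some such device is needed. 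In short, you arrive at the same conclusion but via a decomposition that actually justifies the uniform-in-$i$ maximal estimate, at precisely the step where the paper's argument is sketchy.
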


\begin{proof}
Since $\al^n(e)\sim \Gamma(\al_0^n(e),1)$, then $\al^n(e)^{-1}$ follows the inverse Gamma distribution with parameters $\al_0^n(e)$ and 1. By elementary properties of the Gamma distribution, 
\[
\mathbf{E}(\al^n(e)^{-1})=(\al_0^n(e)-1)^{-1}=\frac{2}{\sqrt{n}-2},
\]
\[
\text{Var}(\al^n(e)^{-1})=(\al_0^n(e)-1)^{-2} (\al_0^n(e)-2)^{-1}=\frac{8}{(\sqrt{n}-2)^2(\sqrt{n}-4)}=O(n^{-3/2}).
\]
Using Doob's martingale inequality follows that
\begin{align*}
\mathbf{P}\left(\sup_{(i,t)\in R_n} \frac{1}{2} \bigg |\sum_{e\in E_{u_0^n,u_i^n}} \left[\al^n(e)^{-1}-\mathbf{E}(\al^n(e)^{-1})\right]\bigg |>\eta\right)\le& 
\frac{\text{Var}\left(\sum_{e\in E_{u_0^n,u_i^n}} \al^n(e)^{-1}\right)}{4 \eta^2}
\\
=&\frac{\sum_{e\in E_{u_0^n,u_i^n}}\text{Var}\left(\al^n(e)^{-1}\right)}{4 \eta^2}
\\
=&\frac{n^{-1/2} d_{T_n}(u_0^n,u_i^n) O(n^{-1})}{4 \eta^2}.
\end{align*}
This in turn yields the desired result just by noticing that
\begin{align*}
&\lim_{n\to \infty} \sup_{(i,t)\in R_n} \bigg |\frac{1}{2}\sum_{e\in E_{u_0^n,u_i^n}} \mathbf{E}\left(\al^n(e)^{-1}\right)-\sigma_T d_{\mathcal{T}}(p_e(0),p_e(t))\bigg |
\\
=&\lim_{n\to \infty} \sup_{(i,t)\in R_n} \bigg |(\sqrt{n}-2)^{-1} d_{T_n}(u_0^n,u_i^n)-\sigma_T d_{\mathcal{T}}(p_e(0),p_e(t))\bigg |=0,
\end{align*}
where the latter equality holds by \eqref{arcsofbm}, see also the definition of $d_{\mathbb{T}_{\text{sp}}^c}$ in Section \ref{topcon}.

\end{proof}

\begin{lemma} \label{drifted1}
As $n\to \infty$, conditional on $(\al^n,\mathcal{U}^n)$,
\[
\left((T_n,n^{-1/2} d_{T_n},\mu_{T_n},\rho_n),\mathcal{V}^n\right)\xrightarrow{(d)} \left((\mathcal{T},\sigma_T d_{\mathcal{T}},\mu_{\mathcal{T}},\rho),2 \mathcal{U}\right),
\]
in the spatial Gromov-Hausdorff-vague topology, where $\mathcal{U}:=(\mathcal{U}(u))_{u\in \mathcal{T}}$ is a process defined by 
\begin{equation} \label{artefact}
\mathcal{U}(u):=\sqrt{2} \phi(u)+d_{\mathcal{T}}(\rho,u), \qquad u\in \mathcal{T},
\end{equation}
where $(\phi(u))_{u\in \mathcal{T}}$ is a tree-indexed Gaussian process built on a probability space with probability measure $\mathbf{P}$, with $\mathbf{E} \phi(u)=0$ and $\textnormal{Cov}(\phi(u),\phi(u'))=d_{\mathcal{T}}(\rho,u\wedge u')$.
\end{lemma}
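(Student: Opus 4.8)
The plan is to prove Lemma~\ref{drifted1} by upgrading Lemma~\ref{before} to a statement about the full potential $\mathcal{V}^n$, then running essentially the same correspondence argument as in Theorem~\ref{bohren}. First I would observe that, by Theorem~\ref{readoff} and the displayed formula for $\mathcal{V}^n$, we have $\mathcal{V}^n(u) = \mathcal{U}^n(\vec u) + \mathcal{U}^n(u) + \log \alpha^n(\{\vec u,u\})^{-1}$, and since consecutive values $\mathcal{U}^n(\vec u)$ and $\mathcal{U}^n(u)$ differ only by the single increment $\omega^n(\{\vec u,u\})$, which is small (it is $\Gamma$-like with shape $\alpha_0^n \sim n^{1/2}/2$, hence of order $n^{-1/4}$ under the rescaling), the potential $\mathcal{V}^n(u)$ is well-approximated by $2\mathcal{U}^n(u)$ up to a negligible correction plus the accumulated logarithmic Gamma term. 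So the two things to identify in the limit are: (i) the scaling limit of $(\mathcal{U}^n(u_i^n))_i$, i.e. the partial sums of the $\omega^n(e)$ along the contour, and (ii) the scaling limit of the term $\sum_{e \in E_{u_0^n,u_i^n}} \log \alpha^n(e)^{-1}$, and then to show that the joint convergence holds along the correspondence $R_n$ together with the contour convergence $C_{(n)} \to \sigma_T e$.

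For step (ii) I would reuse Lemma~\ref{before}: since $\alpha^n(e)^{-1}$ is inverse-Gamma with small mean $2/(\sqrt n - 2)$, one has $\log \alpha^n(e)^{-1} = \alpha^n(e)^{-1} - 1 + O(\alpha^n(e)^{-2})$ type expansion, so up to lower order $\sum_{e\in E_{u_0^n,u_i^n}} \log\alpha^n(e)^{-1} \approx \sum_{e} \alpha^n(e)^{-1}$, which Lemma~\ref{before} already shows converges uniformly along $R_n$ to $2 d_{\mathcal{T}}(\rho, p_e(t))$; I'd need to check that the error terms, summed over a path of length $O(n^{1/2})$, are still $o(1)$, using the variance bound $O(n^{-3/2})$ per edge that is already recorded in the proof of Lemma~\ref{before}. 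For step (i), conditionally on $\alpha^n$ the variables $\omega^n(e)$ have the explicit density \eqref{inversegamma}; for shape $\alpha^n(e) \approx n^{1/2}/2$ large, $\omega^n(e)$ is approximately centered Gaussian with variance $\alpha^n(e)^{-1} \approx 2 n^{-1/2}$, so the partial sums $\mathcal{U}^n(u_i^n) = \sum_{e\in E_{u_0^n,u_i^n}} \omega^n(e)$, being sums of $d_{T_n}(u_0^n,u_i^n) \approx n^{1/2} d_{\mathcal{T}}(\rho,\cdot)$ nearly-independent terms each of variance $\approx 2 n^{-1/2}$, converge to a centered Gaussian of variance $2 d_{\mathcal{T}}(\rho,\cdot)$ --- this is exactly $\sqrt 2 \phi$ in the notation of the lemma. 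The tree-indexed covariance structure $\textnormal{Cov} = d_{\mathcal{T}}(\rho, u\wedge u')$ comes out because two paths share their common part up to $u\wedge u'$, and the shared $\omega$-increments give the covariance; this is the standard branching-random-walk-on-the-CRT limit, so I would invoke \cite{croydon2009spatial} (as in \eqref{arcsofbm}) with the i.i.d.\ step distribution now being the (conditionally Gaussian-like) $\omega^n(e)$, after checking the requisite centered / finite-variance / tail hypotheses hold uniformly.

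Having both pieces, I would assemble the proof exactly as in Theorem~\ref{bohren}: use Skorohod's representation to put $C_{(n)}\to\sigma_T e$ (and the above two convergences) on one probability space almost surely, use the correspondence $R_n$ given by $(i,t)\in R_n \iff i = \lfloor 2nt\rfloor$, note that $R_n$ automatically matches roots, bound the distortion $\mathrm{dis}(R_n)$ of the \emph{spatial} correspondence by $\sup_{(i,t)\in R_n} |\mathcal{V}^n(u_i^n) - 2\mathcal{U}(p_e(t))|$ which tends to $0$ by combining the $\mathcal{U}^n\to\sqrt2\phi$ convergence and the $\sum\log\alpha^n(e)^{-1}\to 2 d_{\mathcal{T}}(\rho,\cdot)$ convergence (so that $\mathcal{V}^n \to 2(\sqrt2\phi + d_{\mathcal{T}}(\rho,\cdot)) = 2\mathcal{U}$), and handle the measure component via $d_{T_n}^P(\mu_{T_n}, \text{pushforward under }R_n) \le (2n)^{-1}$ as before; the metric/measure convergence $(T_n, n^{-1/2}d_{T_n}, \mu_{T_n}) \to (\mathcal{T}, \sigma_T d_{\mathcal{T}}, \mu_{\mathcal{T}})$ is Aldous's theorem. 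The main obstacle I expect is step (i): justifying rigorously that the conditionally-dependent family $(\omega^n(e))_{e}$ --- which is i.i.d.\ only conditionally on $\alpha^n$, and whose conditional law \eqref{inversegamma} is Gaussian only asymptotically --- genuinely produces the tree-indexed Gaussian limit with the right covariance, uniformly along the contour and jointly with the contour convergence. This requires either a careful Lindeberg/Lyapunov argument tracking the conditional variances $\alpha^n(e)^{-1}$ (which themselves fluctuate, though by the variance bound in Lemma~\ref{before} they concentrate around their common mean) or a direct appeal to a branching-random-walk functional limit theorem with triangular-array, non-identically-distributed increments; verifying the fourth-moment / tail hypotheses needed for such a theorem in the present conditionally-defined setting is where the real work lies. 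A secondary, more routine obstacle is controlling the logarithmic correction in step (ii) so that the accumulated error over an $O(n^{1/2})$-length path vanishes.
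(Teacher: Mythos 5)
There is a genuine gap in your decomposition, rooted in a misreading of the formula for $\mathcal{V}^n$ together with a missed drift in the increments $\omega^n(e)$. First, the paper's formula is $\mathcal{V}^n(u)=\mathcal{U}^n(\vec u)+\mathcal{U}^n(u)+\log\al^n(\{\vec u,u\})^{-1}$: the logarithmic term involves only the single edge $\{\vec u,u\}$, not the sum $\sum_{e\in E_{\rho_n,u}}\log\al^n(e)^{-1}$ that your step (ii) proposes to analyse. That single-edge term is $-\log(\sqrt n/2)+O_p(n^{-1/4})$, a vertex-independent shift absorbed into the metric normalisation plus a vanishing fluctuation; it contributes no drift at all. (In any case, your expansion $\log x\approx x-1$ is only valid near $x=1$, whereas $\al^n(e)^{-1}\approx 2/\sqrt n\to 0$, so $\log\al^n(e)^{-1}\approx -\tfrac12\log n$ is nowhere near $\al^n(e)^{-1}-1$.) Second, and more importantly, the increments $\omega^n(e)$ are \emph{not} conditionally centred: completing the square in the density \eqref{inversegamma} (or see \eqref{klbound2}) gives $\mathbf{E}(w^n(e)\,|\,\al^n)\approx (2\al^n(e))^{-1}$, which over a path of $\approx n^{1/2}d_{\mathcal T}(\rho,\cdot)$ edges accumulates into precisely the drift $d_{\mathcal T}(\rho,\cdot)$. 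So your step (i) conclusion that $\mathcal{U}^n\to\sqrt2\,\phi$ is incorrect; the correct statement, and the one the paper proves, is $\mathcal{U}^n\to\sqrt2\,\phi+d_{\mathcal T}(\rho,\cdot)=\mathcal{U}$. Your two errors compensate — you move the drift from the mean of $\omega^n(e)$, where it actually lives, into a nonexistent summed log term — so the final formula $\mathcal{V}^n\to 2(\sqrt2\phi+d_{\mathcal T}(\rho,\cdot))=2\mathcal{U}$ matches the lemma, but the derivation would not survive a careful write-up.

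The paper's proof handles the drift cleanly with a Doob decomposition of $\mathcal{U}^n(u^n_{\lfloor 2nt\rfloor})=\mathcal{E}^n(t)+\mathcal{M}^n(t)$: by Lemma \ref{before} the predictable part $\mathcal{E}^n(t)=\sum_e\mathbf{E}(w^n(e)\,|\,\al^n)\to d_{\mathcal T}(p_e(0),p_e(t))$ and $\mathcal{A}^n(t)=\sum_e\mathrm{Var}(w^n(e)\,|\,\al^n)\to 2d_{\mathcal T}(p_e(0),p_e(t))$ uniformly, after which the martingale part $\mathcal{M}^n\to\sqrt2\,\phi$ follows from the Ethier--Kurtz martingale functional CLT. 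To control the conditional moments and the jump condition the paper uses a Kullback--Leibler bound against $N(0,1)$ (via $\sinh(x)^2\ge x^2$) upgraded to total variation by Pinsker's inequality. This is a somewhat different technical route from the branching-random-walk limit theorem you invoke via \cite{croydon2009spatial}: the martingale approach sidesteps the triangular-array/Lindeberg verification you flag as the main obstacle, and crucially does not require the increments to be centred.
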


\begin{proof}

Conditional on $\al^n$, let $w^n(e)$ be the random variable with density
\[
\sqrt{\frac{\al^n(e)}{2 \pi}} \exp\left(-2 \al^n(e) \sinh \left(\frac{x}{2}\right)^2+\frac{x}{2}\right).
\]
The density of $\sqrt{\al^n(e)} (w^n(e)- (2 \al^n(e))^{-1})$ is
\[
\frac{1}{\sqrt{2 \pi}} \exp\left(-2 \al^n(e) \sinh ((4 \al^n(e))^{-1/2}x+(4 \al^n(e))^{-1})^2+(4 \al^n(e))^{-1/2} x+(4 \al^n(e))^{-1}\right),
\]
For distributions $\sqrt{\al^n(e)} (w^n(e)-(2 \al^n(e))^{-1})$ and $N(0,1)$ of a continuous random variable, the Kullback-Leibler divergence is defined to be the integral:
\[
D_{\textnormal{KL}}(\sqrt{\al^n(e)}(w^n(e)-(2 \al^n(e))^{-1})||N(0,1))=\int_{-\infty}^{+\infty} p(x) \log\left(\frac{p(x)}{q(x)}\right) dx,
\]
where $p$ and $q$ denote the probability densities of $\sqrt{\al^n(e)}(w^n(e)-(2 \al^n(e))^{-1})$ and $N(0,1)$. Moreover,
\begin{align} \label{klbound1}
&\frac{(2 \pi)^{-1/2} \exp\left(-2 \al^n(e) \sinh ((4 \al^n(e))^{-1/2}x+(4 \al^n(e))^{-1})^2+(4 \al^n(e))^{-1/2} x+(4 \al^n(e))^{-1}\right)}{(2 \pi)^{-1/2} \exp(-x^2/2)}
\nonumber \\ \nonumber \\
\le &\frac{\exp\left(-2 \al^n(e) ((4 \al^n(e))^{-1/2}x+(4 \al^n(e))^{-1})^2+(4 \al^n(e))^{-1/2} x+(4 \al^n(e))^{-1}\right)}{\exp(-x^2/2)}=e^{(8 \al^n(e))^{-1}},
\end{align}
where we just used that $\sinh (x)^2\ge x^2$. Therefore, we have
\[
D_{\textnormal{KL}}(\sqrt{\al^n(e)}(w^n(e)-(2 \al^n(e))^{-1})||N(0,1))\le e^{(8 \al^n(e))^{-1}}.
\]
Pinsker's inequality \cite[(13)]{shields1998the} entails that 
\begin{align*}
&D_{\textnormal{TV}}(\sqrt{\al^n(e)}(w^n(e)-(2 \al^n(e))^{-1})||N(0,1))
\\ \\
&\le \sqrt{\frac{1}{2} D_{\textnormal{KL}}(\sqrt{\al^n(e)}(w^n(e)-(2 \al^n(e))^{-1})||N(0,1))}
=\frac{e^{(16 \al^n(e))^{-1}}}{\sqrt{2}}.
\end{align*}
where $D_{\textnormal{TV}}$ is the total variation distance between 
probability measures. We have (conditional on $\al^n$) a strong convergence. For any $f: \mathbb{R}\to \mathbb{R}$ non-negative measurable function
\[
\mathbf{E}\left[g\left(\sqrt{\al^n(e)} \left(w^n(e)-(2 \al^n(e))^{-1}\right)\right)\bigg |\al^n\right]\le \frac{e^{(8 \al^n(e))^{-1}}}{(2 \pi)^{1/2}} \int_{-\infty}^{+\infty} f(z) e^{-z^2/2} dz,
\]
such that $f$ is integrable for $N(0,1)$. Take $s, t\in [0,1]$ with $s\le t$, such that $(i,s), (j,t)\in R_n$ and $e_{ij}:=\{u_i^n,u_j^n\}\in E(T_n)$. In particular, due to Lemma \ref{before}, and the almost sure continuity of the normalized Brownian excursion $e$, we deduce that
\begin{equation} \label{klbound2}
\mathbf{E}\left(w^n(e_{ij})|\al^n\right)=(2 \al^n(e_{ij}))^{-1}+o_p(\al^n(e_{ij})^{-1}),
\end{equation}
where the notation $o_p$ means that the set of values $\mathbf{E}\left(w^n(e_{ij})/(2 \al^n(e_{ij}))^{-1}|\al^n\right)$ converges to 1 in probability. Similarly, 
\begin{equation} \label{klbound3}
\textnormal{Var}\left(w^n(e_{ij})|\al^n\right)=\al^n(e_{ij})^{-1}+o_p(\al^n(e_{ij})^{-1}).
\end{equation}
By \eqref{klbound1},
\begin{equation} \label{klbound4}
\mathbf{P}\left(\left|w^n(e_{ij})-(2 \al^n(e_{ij}))^{-1}\right|\ge u\big |\al^n \right)\le  \frac{e^{(8 \al^n(e_{ij}))^{-1}}}{(2 \pi)^{1/2}}\int_{\sqrt{\al^n(e_{ij})} u}^{+\infty} e^{-z^2/2} dz.
\end{equation}
Let
\[
\mathcal{E}^n(t)=\mathbf{E}(\mathcal{U}^n(u_{\lfloor 2 n t\rfloor}^n)|\al^n), \qquad \mathcal{M}^n(t)=\mathcal{U}^n(u_{\lfloor 2 n t\rfloor}^n)-\mathcal{E}^n(t),
\]

\[
\mathcal{A}^n(t)=\mathbf{E}(\mathcal{M}^n(t)^2|\al^n).
\]
$(\mathcal{M}^n(t))_{t\ge 0}$ and $(\mathcal{M}^n(t)^2-\mathcal{A}^n(t))_{t\ge 0}$ are martingales. The identities \eqref{klbound2} and \eqref{klbound3} imply that,
\[
\sup_{t\in [0,1]} \left|\mathcal{E}^n(t)-d_{\mathcal{T}}(p_e(0),p_e(t))\right|, \qquad 
\sup_{t\in [0,1]} \left|\mathcal{A}^n(t)-2 d_{\mathcal{T}}(p_e(0),p_e(t))\right|,
\]
converge to 0 in probability. To conclude that $(\mathcal{M}^n(t))_{t\in [0,1]}$ converges in distribution to $(\sqrt{2} \phi(p_e(t)))_{t\in [0,1]}$, and therefore $(\mathcal{U}^n(u^n_{\lfloor 2 n t\rfloor}))_{t\in [0,1]}$ converges in distribution to $(\mathcal{U}(p_e(t)))_{t\in [0,1]}$, we can apply the martingale functional Central Limit Theorem 1.4 in \cite[Section 7.1]{ethier2009markov}. We also need to check that,
\begin{align*}
\lim_{n\to \infty} &\mathbf{E}\bigg[\sup_{t\in [0,1]} \left(\mathcal{M}^n(t)-\mathcal{M}^n(t-)\right)^2\big |\al^n\bigg]
\\
&=\lim_{n\to \infty} \mathbf{E}\bigg[\sup_{e_{ij}\in E(T_n)} \left(w^n(e_{ij})-\mathbf{E}(w^n(e_{ij}))\right)^2\big | \al^n\bigg]
\\
&=\lim_{n\to \infty} \mathbf{E}\bigg[\sup_{e_{ij}\in E(T_n)} \left(w^n(e_{ij})-(2 \al^n(e_{ij}))^{-1}\right)^2\big | \al^n\bigg]
\\
&=\lim_{n\to \infty} \int_{0}^{+\infty} \mathbf{P}\bigg(\sup_{e_{ij}\in E(T_n)} \left|w^n(e_{ij})-(2 \al^n(e_{ij}))^{-1}\right|\ge u^{1/2}\big | \al^n\bigg) du 
\\
&=\lim_{n\to \infty} \int_{0}^{+\infty} \bigg(1-\prod_{e_{ij}\in E(T_n)}\left(1-\mathbf{P}\left(w^n(e_{ij})-(2 \al^n(e_{ij}))^{-1}\ge u^{1/2}\big | \al^n \right)\right)\bigg) du=0.
\end{align*}
We get that by combining \eqref{klbound4} and 
\[
\int_{\sqrt{\al^n(e_{ij})} u}^{+\infty} e^{-z^2/2} dz=O(e^{-\al^n(e_{ij}) u^2/2})
\]
for the complementary cumulative distribution function of $N(0,1)$.
 
\end{proof}

When $\nu_n$ and $r_n$ are defined similarly to \eqref{biasrev} and \eqref{biasmetr} respectively, with the potential of the particular RWRE studied in Section \ref{sec1} replaced by $\mathcal{V}^n$, the proof of Theorem \ref{bohren} remains intact. Note that $(\phi(u))_{u\in \mathcal{T}}$ has a continuous modification, therefore there exists a $\mathbf{P}$-a.s. continuous modification of $\mathcal{U}$. The scaling limit of the ERRW on $T_n$ with initial weights as in \eqref{constant1} is described as the $\nu_{\mathcal{U}}$-speed motion on $(\mathcal{T},\sigma_T r_{\mathcal{U}},\rho)$, where
\[
r_{\mathcal{U}}(u_1,u_2):=\int_{[[u_1,u_2]]} \exp (2 \mathcal{U}(v)) \lambda(\textnormal{d} v),
\]
for every $u_1, u_2\in \mathcal{T}$ and $\nu_{\mathcal{U}}$ is the mass measure on $\mathcal{T}$ defined as the image measure by the canonical projection $p_{\hat{e}}$ of the Lebesgue measure on $[0,1]$, see \eqref{imagmeas}, where
\[
\hat{e}:=\left(\int_{[[p_e(0),p_e(t)]]} \exp (-2 \mathcal{U}(v)) \lambda(\textnormal{d} v): 0\le t\le 1\right).
\]

\begin{theorem} \label{competent}
Consider the ERRW $(Z_k^n)_{k\ge 1}$ on $T_n$, started at $\rho_n$, with initial weights given by \eqref{constant1}. Then, there exists a common metric space $(Z,d_Z)$ onto which we can isometrically embed $(T_n,r_n)$, $n\ge 1$ and $(\mathcal{T},r_{\mathcal{U}})$, such that 
\[
\mathbf{P}^{\rho_n}_{\al_0} \left((n^{-1/2} Z^n_{n^{3/2} t})_{t\in [0,1]}\in \cdot \right)\to \mathbf{P}^{\rho}\left((Z_{t \sigma_T^{-1}})_{t\in [0,1]}\in \cdot \right),
\]
weakly as probability measures on $D(\mathbb{R}_{+},Z)$, where $(Z_t)_{t\ge 0}$ is the $\nu_{\mathcal{U}}$-Brownian motion in a random potential $2 \mathcal{U}$ on the CRT, started at $\rho$. The potential $\mathcal{U}$ in \eqref{artefact} is a Gaussian potential with a drift, which is an artefact of the reinforcement.
\end{theorem}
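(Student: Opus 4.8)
The plan is to run the argument that yielded Theorem~\ref{definitive} from Theorem~\ref{bohren}, feeding in the Sabot--Tarr\`es representation in place of the weakly biased walk. By Theorem~\ref{readoff}, under the annealed law the ERRW $(Z_k^n)_{k\ge1}$ coincides in distribution with the RWRE on $T_n$ in the environment $(\al^n,\mathcal{U}^n)$, whose potential is the function $\mathcal{V}^n$ displayed just after Theorem~\ref{readoff}; accordingly the resistance metric $r_n$ and the invariant measure $\nu_n$ attached to this environment are the quantities in \eqref{biasmetr} and \eqref{biasrev} with $n^{-1/4}V_n$ replaced throughout by $\mathcal{V}^n$. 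The first step is the analogue of Theorem~\ref{bohren}: that $\big((T_n,n^{-1/2}r_n,(2n)^{-1}\nu_n,\rho_n),\mathcal{V}^n\big)$ converges in the spatial Gromov--Hausdorff--vague topology to $\big((\mathcal{T},\sigma_T r_{\mathcal{U}},\nu_{\mathcal{U}},\rho),2\mathcal{U}\big)$. This is obtained by repeating the proof of Theorem~\ref{bohren} verbatim, using the correspondence $R_n$ coming from the contour exploration and the distorted contour function $\tilde C_n(i)=\sum_{u\in[u_0^n,u_i^n]]}e^{-\mathcal{V}^n(u)}$: Lemmas~\ref{before} and~\ref{drifted1} supply exactly the convergence $\mathcal{V}^n\to 2\mathcal{U}$ that forces $\mathrm{dis}(R_n)$ and the Prokhorov distance between $(2n)^{-1}\mu_{\tilde C_n}$ and $\nu_{\mathcal{U}}$ to tend to $0$.

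Next I would pass to the annealed space. Since the normalised pair of discrete tours of $T_n$ converges weakly to the tours coding $(\mathcal{T},\phi)$ (by \cite[Theorem~2]{janson2005convergence} together with the conditional independence of $(\al^n,\mathcal{U}^n)$ given $T_n$), Skorohod's representation theorem produces a probability space $\mathbf{P}$ carrying all the $\big((T_n,n^{-1/2}r_n,(2n)^{-1}\nu_n,\rho_n),\mathcal{V}^n\big)$ together with the limit, on which the convergence of the first step holds $\mathbf{P}$-almost surely. I then verify Assumption~\ref{sinai}: condition \eqref{non} is this almost sure convergence, and the uniform recurrence condition \eqref{non2} holds for free because $(\mathcal{T},\sigma_T r_{\mathcal{U}})$ is compact, so for large $R$ the ball $B_n(\rho_n,R)$ is eventually all of $T_n$ and the resistance to its complement is eventually infinite --- exactly the reasoning used before Theorem~\ref{definitive}. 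Theorem~\ref{crucial} then gives a common metric space $(Z,d_Z)$, isometric embeddings of $(T_n,r_n)$ and $(\mathcal{T},r_{\mathcal{U}})$ into it, and weak convergence in $D(\mathbb{R}_+,Z)$ of the rescaled $\nu_n$-speed motion to the $\nu_{\mathcal{U}}$-speed motion on $(\mathcal{T},\sigma_T r_{\mathcal{U}})$; since rescaling the limiting resistance metric by $\sigma_T$ rescales its Dirichlet form by $\sigma_T^{-1}$, this limit is $(Z_{t\sigma_T^{-1}})_{t\ge0}$, the $\nu_{\mathcal{U}}$-Brownian motion in the random potential $2\mathcal{U}$ on $(\mathcal{T},r_{\mathcal{U}})$ started from $\rho$.

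It remains to identify the discrete-time object and fix the time normalisation. The rescaled $\nu_n$-speed motion supplied by Theorem~\ref{crucial} is a continuous-time nearest-neighbour walk whose embedded jump chain has transition probabilities $c(\{u,u_i\})/c(\{u\})=\om_{uu_i}$, hence is the RWRE in the environment $(\al^n,\mathcal{U}^n)$, hence --- annealed --- the ERRW; and, using the identity $\nu_n(\{u\})=\sum_{u'\sim u}c(\{u,u'\})$ built into \eqref{biasrev}, its total exponential jump rate at every vertex equals $\big(2(2n)^{-1}\nu_n(\{u\})\big)^{-1}\sum_{u'\sim u}(n^{-1/2}r_n(u,u'))^{-1}=n^{3/2}$. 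Therefore by time $t$ it has taken a $\mathrm{Poisson}(n^{3/2}t)$ number of steps, and replacing this random clock by the deterministic $n^{3/2}t$ --- that is, passing to $Z^n_{n^{3/2}t}$ --- does not change the limit, by a routine equicontinuity estimate using that the limit has continuous sample paths. Combined with the $n^{-1/2}$ rescaling of the state space this yields the asserted convergence of $\big(n^{-1/2}Z_{n^{3/2}t}^n\big)_{t\in[0,1]}$.

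The genuinely substantial input, already isolated in Lemmas~\ref{before} and~\ref{drifted1}, is the convergence of $\mathcal{V}^n$: the Sabot--Tarr\`es weights $\om^n(e)$, once centred and scaled by $\sqrt{\al^n(e)}$, are total-variation close to $N(0,1)$ --- via the bound $\sinh(x)^2\ge x^2$ feeding the Kullback--Leibler estimate \eqref{klbound1} and Pinsker's inequality --- so a martingale functional central limit theorem extracts the Gaussian part $\sqrt2\,\phi$ of $\mathcal{U}$, while the $x/2$ term in the density \eqref{inversegamma} accumulates into the deterministic drift $d_{\mathcal{T}}(\rho,\cdot)$, the ``artefact of the reinforcement''. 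Granting those two lemmas and the stated fact that the proof of Theorem~\ref{bohren} transfers unchanged, what is left is bookkeeping, whose only delicate point --- the verification of \eqref{non2} --- is here trivialised by compactness of the limiting tree.
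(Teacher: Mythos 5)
Your proposal follows the paper's own route for Theorem~\ref{competent} essentially step for step: represent the ERRW annealed law as a RWRE via the Sabot--Tarr\`es theorem, feed Lemmas~\ref{before} and~\ref{drifted1} into the machinery of Theorem~\ref{bohren} to get spatial Gromov--Hausdorff--vague convergence with the potential $\mathcal{V}^n$ in place of $n^{-1/4}V_n$, check Assumption~\ref{sinai} (with \eqref{non2} coming for free from compactness), and then invoke Theorem~\ref{crucial}. The paper's written proof is in fact terser than your version --- it dispenses with the Poissonization and time-change bookkeeping in a single line, whereas you spell those out --- but the structure, key lemmas, and conclusion are identical.
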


We emphasize that choosing $T_n$ to be a critical Galton-Watson tree with finite variance for the aperiodic offspring distribution is justified by its distributional convergence as a metric measure space, and more importantly by the convergence of its contour function. Therefore, it is of no surprise that the theorem above is expected to hold for the ERRW on random ordered trees that possess these properties, such as a size-conditioned critical Galton-Watson tree, whose aperiodic offspring distribution lies in the domain of attraction of a stable law of index $\al\in (1,2]$. It was shown by Duquesne \cite{duq2003limit} (see also \cite{igor2013simple}) that, properly rescaled, its contour function converges weakly to a normalized excursion of the continuous height function associated with the $\al$-stable continuous-state branching process, which encodes the $\al$-stable L\'evy tree, a generalisation of the CRT in the case $\al=2$ (for definitions, see the references mentioned above). 

\section*{Acknowledgements}

I would like to thank my supervisor Dr David Croydon for suggesting the problem, his support and many
useful discussions.

\bibliographystyle{abbrv}
\bibliography{InvarianceTreesAndriopoulos}

\end{document}